\newtheorem{thm}{Theorem}[section]
\newtheorem{cor}[thm]{Corollary}
\newtheorem{prop}[thm]{Proposition}
\newtheorem{lem}[thm]{Lemma}
\newtheorem*{thm1}{Theorem A}
\theoremstyle{definition}
\newtheorem{defn}[thm]{Definition}
\theoremstyle{remark}
\newtheorem{rem}[thm]{Remark}
\let\c@equation\c@thm
\numberwithin{equation}{section}
\title[]{Semi-local simple connectedness\\of non-collapsing Ricci limit spaces}
\author[]{Jiayin Pan \& Guofang Wei}
\thanks{G.W. is partially supported by NSF DMS 1811558}
\thanks{J.P. is partially supported by AMS Simons travel grant}
\newcommand{\Addresses}{{ 
		\bigskip
		\footnotesize
		
		Jiayin Pan, \textsc{Department of Mathematics, University of California\ -\ Santa Barbara, CA, USA.}\par\nopagebreak
		\textit{E-mail address}: \texttt{jypan10@gmail.com}\\

        Guofang Wei, \textsc{Department of Mathematics, University of California\ -\ Santa Barbara, CA, USA.}\par\nopagebreak
        \textit{E-mail address}: \texttt{wei@math.ucsb.edu}

}}
\begin{document}
	
\begin{abstract}
	Let $X$ be a non-collapsing Ricci limit space and let $x\in X$. We show that for any $\epsilon>0$, there is $r>0$ such that every loop in $B_t(x)$ is contractible in $B_{(1+\epsilon)t}(x)$, where $t\in(0,r]$. In particular, $X$ is semi-locally simply connected.
\end{abstract}
	
\maketitle

\section*{Introduction}\label{section_intro}

Studying Gromov-Hausdorff limits of manifolds with uniform curvature lower bounds has been very active and has applications in many different directions. With sectional curvature lower bounds, the limit spaces are known as Alexandrov spaces, whose geometrical and topological structures have been extensively studied. 
In particular, these limit spaces are locally contractible \cite{Per1}.
For Ricci curvature lower bounds, Cheeger, Colding, and Naber have developed a rich theory on the regularity and geometric structure of the Ricci limit spaces. On the other hand, surprisingly little is known about the topology of these spaces. In fact, it could be so complicated that even a non-collapsing Ricci limit space may have locally infinite topological type \cite{Men}. About twenty years ago, Sormani and the second author \cite{SW1, SW2} gave the first topological restriction, showing that the universal cover of any Ricci limit space does exist, but were not able to show that the universal cover is simply connected. Recall that a connected and locally path-connected topological space has a simply connected universal cover if and only if space itself is semi-locally simply connected. See \cite[page 84]{Sp} for an example whose universal cover exists but is not simply connected.

As the main result of this paper, we show that any non-collapsing Ricci limit space is semi-locally simply connected.

\begin{thm1}\label{main}
	Let $(M_i,p_i)$ be a sequence of complete Riemannian $n$-manifolds converging to $(X,p)$ in the Gromov-Hausdorff topology with
	$$\mathrm{Ric}_{M_i}\ge -(n-1), \quad \mathrm{vol}(B_1(p_i))\ge v>0.$$
	Then $X$ is semi-locally simply connected.
\end{thm1}

Theorem A implies that any non-collapsed Ricci limit space has a simply connected universal cover. Our proof does not depend on the results in \cite{SW1,SW2}. 

In fact, we show that $X$ is essentially locally simply connected and have a local version. To state the result precisely, we use the notion of module of 1-contractibility (see \cite{Bor, Pet}). 

\begin{defn}\label{def_module}
	Let $X$ be a metric space. For $x\in X$ and $t\ge0$, we define $\rho(t,x)$, \textit{module of 1-contractibility} at $x$, as below:
	$$\rho(t,x)=\inf\{\infty,\rho\ge t| \text{ every loop in } B_t(x) \text{ is contractible in }B_\rho(x)\},$$
	where $B_r(x)$ is the open metric ball of radius $r$ centered at $x$.
\end{defn}

From the definition, it is clear that $X$ is semi-locally simply connected if for any $x\in  X$, there is $T>0$ such that $\rho(T,x)<\infty$; $X$ is locally simply connected if for any $x\in X$, there is $t_i\to 0$ such that $\rho(t_i,x)=t_i$. 

We state the local version of Theorem A with an estimate on $\rho(t,x)$.

\begin{thm}\label{main_local}
	Let $(M_i,p_i)$ be a sequence of Riemannian $n$-manifolds (not necessarily complete) converging to $(X,p)$ such that for all $i$,\\
	(1) $B_2(p_i)\cap\partial M_i=\emptyset$ and the closure of $B_2(p_i)$ is compact;\\
	(2) $\mathrm{Ric}\ge -(n-1)$ on $B_2(p_i)$, $\mathrm{vol}(B_1(p_i))\ge v>0.$\\
	Then 
	$$\lim\limits_{t\to 0}\dfrac{\rho(t,x)}{t}=1$$ 
	holds for any $x\in B_1(p)$.
\end{thm}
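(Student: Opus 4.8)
The plan is to prove the equivalent assertion that for every $\epsilon>0$ and every $x\in B_1(p)$ there is $r>0$ with the property that every loop in $B_t(x)$ is contractible in $B_{(1+\epsilon)t}(x)$ for all $t\in(0,r]$; since $\rho(t,x)\ge t$ always, this yields $\rho(t,x)/t\to1$. Suppose this fails for some $x$ and $\epsilon$. Then there are $t_j\to0$ and loops $c_j\subset B_{t_j}(x)$ that do not contract in $B_{(1+\epsilon)t_j}(x)$. Rescaling $d_X$ by $t_j^{-1}$ and passing to a Gromov--Hausdorff subsequential limit gives a tangent cone $(C(Z),o)$ of $X$ at $x$. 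Because $X$ is a non-collapsing Ricci limit, volume convergence and Bishop--Gromov monotonicity show that $X$ is uniformly non-collapsing near $x$, that the rescaled spaces $(X,x,t_j^{-1}d_X)$ are themselves non-collapsing Ricci limits with a uniform volume lower bound and $\mathrm{Ric}\ge-(n-1)t_j^2\to0$, and that the limit $C(Z)$ is a metric cone (Cheeger--Colding). In particular, for $j$ large, $(X,x,t_j^{-1}d_X)$ is, on $B_2(x)$, a non-collapsing Ricci limit with Ricci lower bound tending to $0$ that is $\delta_j$-Gromov--Hausdorff close to the metric cone $(C(Z),o)$, with $\delta_j\to0$.

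The crux is the following local statement: there is $\delta_0=\delta_0(n,v,\epsilon)>0$ so that if $(Y,y)$ is a non-collapsing Ricci limit (equivalently, a suitable limit of manifolds $M$ with $\mathrm{Ric}\ge-\delta_0$ on $B_2(y)$ and $\mathrm{vol}(B_1(y))\ge v$) and $B_2(y)$ is $\delta_0$-Gromov--Hausdorff close to a ball in a metric cone with vertex near $y$ --- equivalently, $B_2(y)$ satisfies an almost-volume-cone condition --- then every loop in $B_1(y)$ contracts in $B_{1+\epsilon}(y)$. Granting this and applying it to $Y=(X,x,t_j^{-1}d_X)$ with $j$ large contradicts the non-contractibility of $c_j$, proving the theorem.

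To establish the crux I would avoid using the Gromov--Hausdorff closeness directly (it yields no continuous maps) and instead use the analytic ``almost volume cone implies almost metric cone'' package: on $B_2(y)$ there is a harmonic function $b$ with $b\approx d(\cdot,y)$, $|\nabla b|\approx1$, and $\mathrm{Hess}(b^2/2)\approx g$ in an integral sense valid on a definite portion of the ball. The downward gradient flow of $b^2/2$ is Lipschitz, contracts the lengths of curves at a roughly geometric rate, and decreases $b$ --- hence decreases $d(\cdot,y)$ up to a small error --- so it carries $B_1(y)$ into $B_{1+\epsilon/2}(y)$ while pushing any prescribed loop into a ball $B_\mu(y)$ with $\mu$ as small as we like. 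This produces an explicit, uniformly Lipschitz partial homotopy (which, when one works on the approximating manifolds, passes to the Gromov--Hausdorff limit via Arzela--Ascoli). One then completes the contraction of the residual loop in $B_\mu(y)$ by iterating the construction at ever smaller scales: near every sufficiently small radius there is a ``good'' scale at which the volume ratio is almost constant (the volume ratio has finite total variation over $(0,1]$), so an almost-metric-cone structure is available there; the radial expansions incurred at these later stages form a summable series of size $\ll\mu\ll\epsilon$, so the whole contraction takes place inside $B_{1+\epsilon}(y)$.

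The main obstacle is precisely this local statement --- upgrading ``$B_2(y)$ is close to a metric cone'' to an honest null-homotopy within a slightly larger ball. Since Gromov--Hausdorff maps are not continuous, one genuinely needs the harmonic cone-coordinate functions and good control on their gradient flow (via segment-type inequalities and maximal-function estimates) to build continuous homotopies, and one needs those estimates to be sharp enough to simultaneously contract the loop and confine it to $B_{1+\epsilon}(y)$; keeping the accumulated radial error under control through the downward iteration, and checking that the blow-up spaces $(X,x,t_j^{-1}d_X)$ are bona fide non-collapsing Ricci limits to which this machinery applies, are the remaining points to be handled carefully.
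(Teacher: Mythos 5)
Your reduction (prove that for every $\epsilon>0$ loops in $B_t(x)$ contract in $B_{(1+\epsilon)t}(x)$ for all small $t$) and the blow-up to a tangent cone are fine, but the whole argument rests on the ``crux'' lemma, and that lemma is false as stated. You claim there is a uniform $\delta_0(n,v,\epsilon)$ such that any non-collapsed Ricci-limit ball $B_2(y)$ with $\mathrm{Ric}\ge-\delta_0$ and $\mathrm{vol}(B_1(y))\ge v$ which is $\delta_0$-GH-close to a metric cone has every loop in $B_1(y)$ contractible in $B_{1+\epsilon}(y)$. Otsu's examples (cited in the paper precisely for this) kill this: they are smooth non-collapsed manifolds with $\mathrm{Ric}>0$ carrying non-contractible loops of arbitrarily small size $\epsilon_i\to0$. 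Around such a loop, Bishop--Gromov almost-monotonicity of the volume ratio plus a pigeonhole over the $\log(1/\epsilon_i)\to\infty$ dyadic scales in $[\epsilon_i,1]$ produces a scale $r\ge 3\epsilon_i$ at which the ratio is nearly constant, so by ``almost volume cone implies almost metric cone'' the rescaled ball $B_2$ is $\delta$-close to a metric cone with $\delta\to0$, is uniformly non-collapsed, and has Ricci lower bound $\to0$ after rescaling --- yet it contains a loop in $B_1$ that is non-contractible in the entire manifold, let alone in $B_{1+\epsilon}$. So no such $\delta_0(n,v,\epsilon)$ exists: GH-closeness to a cone, even with non-collapsing, simply does not control $1$-contractibility, and your blow-up argument cannot close.

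This is not a repairable technicality but the central difficulty of the paper. The paper never proves (and cannot prove) an effective cone-closeness statement; it first establishes $\lim_{t\to0}\rho(t,x)=0$ by an induction on the volume density $\omega(x)$, with the type I/II/III classification, local covering spaces built from short loops (Anderson's bound plus volume convergence), and homotopies transferred from the sequence via uniformly controlled $1$-contractibility --- information that lives in the approximating manifolds, not in the cone structure. Only afterwards is the sharp ratio $1$ obtained, and there the cone enters through a purely metric obstruction: if $\rho(r_j,x)/r_j\ge1+\delta$, one produces halfway-minimal geodesic loops, transfers them back to the manifolds (your homotopies cannot be built on $M_i$ either, since the relevant loops may be non-contractible there; the paper's Lemmas 5.5--5.6 exist exactly to handle loops appearing or disappearing along the sequence), applies the Abresch--Gromoll excess estimate to show the midpoint is uniformly far from the sphere $\partial B_{(1/2+\epsilon)d}(x)$, and passes this to the tangent cone to contradict the existence of rays through that midpoint. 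A secondary but real issue with your plan: the ``gradient flow of $b^2/2$'' on a limit space is not a continuous, length-contracting flow --- the Cheeger--Colding Hessian bounds are integral and only yield GH-approximations --- so even aside from the counterexample you have no mechanism producing an honest null-homotopy, and passing homotopies from the manifolds to the limit requires exactly the uniform local $1$-contractibility (the paper's Theorem 1.7) that is being proved.
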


We explain some of the difficulties in studying semi-local simple connectedness. For Alexandrov spaces, Perelman showed there is a homeomorphism from the tangent cone at a point to a local neighborhood around this point \cite{Per1} (also see \cite{Ka}); together with the fact that tangent cones are metric cones \cite{BGP}, this leads to the local contractibility. However, for Ricci limit spaces, we no longer have such a connection between tangent cones and local topology. For non-collapsing Ricci limit spaces, Cheeger-Colding proved the important result that tangent cones at any point are all metric cones \cite{CC1}, but a neighborhood of a point could have infinite second Betti number \cite{Men}. Also, tangent cones at a point may not be unique and may not be homeomorphic \cite{CC2,CN}. Even at the fundamental group level, it is not clear how to connect the tangent cone to the neighborhood.  From the point of view of \cite{SW1}, since the universal cover always exists, it remains to rule out a non-contractible loop that can be homotopic to loops lying in arbitrarily small metric balls. Such a loop cannot be lifted to an open path in the universal cover, so one can not use deck transformations to study them. Also, such a loop may have infinite length. The last option is to use the sequence. In order to pass local simply connectedness information from the sequence to the limit space, one needs uniform control on the module of 1-contractibility for the sequence (see Theorem \ref{convergence_LGC}). However, a sequence of manifolds with the conditions in Theorem A may not have uniform modules of 1-contractibility. In fact, examples of Otsu show that the sequence may have shorter and shorter nontrivial loops \cite[page 262 Remark (2)]{Ot}. 

The most important step in our proof of Theorem \ref{main_local} is $\lim_{t\to 0}\rho(t,x)=0$. After proving that $\lim_{t\to 0}\rho(t,x)=0$ holds for all $x$, we can further improve the result to $\lim_{t\to 0}\rho(t,x)/t=1$, by using the structure of tangent cones, a modification of Sormani's uniform cut technique \cite{Sor}, and certain connection between the local fundamental group of the limit space and that of the sequence (see Section \ref{section_ratio}). 

We outline our approach to prove $\lim_{t\to 0}\rho(t,x)=0$ as follows. For a point $x$ in the limit space and a sequence of points $x_i$ on $M_i$, we classify the limit points into three types based on the module of contractibility at, or around, $x_i$. Roughly speaking, type I points are those that modules of $1$-contractibility are uniformly controlled in a uniform neighborhood around all $x_i$; type II points are those that modules of $1$-contractibility are not uniformly controlled at $x_i$; type III are the rest (see Definition \ref{def_types} and Lemma \ref{ind_bound}). When $x$ is a type I point, we can control $\rho(t,x)$ by constructing a sequence of uniformly convergent homotopies (see Lemma \ref{ind_bound} and Theorem \ref{convergence_LGC}); this is related to \cite{Bor}. 

The proof of $\lim_{t\to 0}\rho(t,x)=0$ is an induction argument on the local volume around $x$. As the base case, we start with points whose local volume is strictly larger than the half volume of the same size ball in the corresponding space form, or for simplicity, points with half volume lower bound. We show that $\rho(t,x_i)$ can be controlled by a linear function in this case,  where $x_i\in M_i$ converging to $x$; with this, we can deduce that $\lim_{t\to 0}\rho(t,x)=0$ holds (see Theorem \ref{large_vol_contra} and Proposition \ref{large_vol_type_1}). For the next induction step, we consider $x$ with quarter volume lower bound. If $x$ is of type I, then we are done. If $x$ is of type II, we can use small loops around $x_i$ to construct a sequence of covering spaces of certain local balls $B_\epsilon(x_i)$. On this sequence of covers, we can lift small loops as open paths. Moreover, these covers shall have half volume lower bound so that the base case in the induction can be applied. Anderson's results on small loops \cite{An1} and Cheeger-Colding's volume convergence for non-collapsing Ricci limit spaces \cite{Co,CC2} are essential in the above steps. The remaining type III case is the most technical situation. The method of type I points fails due to the lack of local control on $1$-contractibility. Also, there are no small loops to construct local covers. The key observation is that, based on results on type II and the definition of type III, for any point $x\in X$, at least one of $\{\rho(t,x_i)\}_i$ and $\rho(t,x)$ is controlled, where $x_i\in M_i$ converging to $x\in X$. This inspires us to construct each small piece of the desired homotopy from the data of the sequence $M_i$ or from that of the limit space $X$, in a delicate way (see more explanations in Section \ref{section_homotopy}). With the result on type III points (Theorem \ref{type_3_ok}), we can continue the induction argument and eventually finish the proof of $\lim_{t\to 0}\rho(t,x)=0$.

As a by-product from the study of points with half volume lower bound, we have the following. 
\begin{cor}\label{quant_anderson}
	Given $n$ and $L\in(1/2,1]$, there is a constant $C(n,L)$ such that the following holds.
	
	Let $M$ be a complete non-compact $n$-manifold of $\mathrm{Ric}\ge 0$. If $M$ has Euclidean volume growth of constant $L>1/2$, that is,
	$$\limsup\limits_{R\to\infty}\dfrac{\mathrm{vol}(B_R(p))}{\mathrm{vol}(B_R^n(0))}=L>\dfrac{1}{2}$$
	for some $p\in M$,
	then for any $x\in M$, any $r>0$, and any loop $c$ in $B_r(x)$, $c$ must be contractible in $B_{Cr}(x)$.
\end{cor}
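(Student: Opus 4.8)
The plan is to deduce Corollary~\ref{quant_anderson} from the base case of the paper's induction --- the linear control on the module of $1$-contractibility at points with a half-volume lower bound, Theorem~\ref{large_vol_contra} --- by using the scale invariance that Euclidean volume growth supplies.

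\emph{Step 1: a uniform half-volume lower bound at all scales and centers.} Since $M$ is complete with $\mathrm{Ric}\ge 0$, Bishop--Gromov monotonicity makes $s\mapsto \mathrm{vol}(B_s(y))/\mathrm{vol}(B_s^n(0))$ non-increasing for each fixed $y\in M$, so its limit $\theta(y)\in[0,1]$ as $s\to\infty$ exists. From the inclusions $B_{s-d(y,z)}(z)\subseteq B_s(y)\subseteq B_{s+d(y,z)}(z)$, dividing by $\mathrm{vol}(B_s^n(0))$ and letting $s\to\infty$ (using $(s\pm d(y,z))^n/s^n\to 1$), one gets $\theta(y)=\theta(z)$ for all $y,z$; the hypothesis pins this common value to $L$. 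Monotonicity then yields $\mathrm{vol}(B_s(y))\ge L\cdot\mathrm{vol}(B_s^n(0))>\tfrac{1}{2}\,\mathrm{vol}(B_s^n(0))$ for all $y\in M$ and all $s>0$.

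\emph{Step 2: reduction to a fixed scale by rescaling.} Fix $x\in M$, $r>0$, and let $t_0=t_0(n,L)\in(0,1]$ be a radius at which Theorem~\ref{large_vol_contra} produces a linear estimate at a half-volume point. Replacing $g$ by $\bar g=(t_0/r)^2 g$ leaves $\mathrm{Ric}$ unchanged (so $\mathrm{Ric}_{\bar g}\ge 0\ge -(n-1)$), keeps $(M,\bar g)$ complete and boundaryless with $\overline{B^{\bar g}_2(x)}$ compact, transforms balls by $B^{\bar g}_\rho(y)=B^g_{(r/t_0)\rho}(y)$, and --- since the bound of Step~1 is scale invariant --- preserves it for $\bar g$ at every scale and center. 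Hence $(M,\bar g)$, viewed if necessary as the constant sequence $M_i\equiv(M,\bar g)\to(M,\bar g)$, satisfies at $x$ the hypotheses of Theorem~\ref{large_vol_contra}, which yields $C_1=C_1(n,L)$ such that every loop in $B^{\bar g}_{t_0}(x)$ is contractible in $B^{\bar g}_{C_1 t_0}(x)$. Unwinding $B^{\bar g}_{t_0}(x)=B^g_r(x)$ and $B^{\bar g}_{C_1 t_0}(x)=B^g_{C_1 r}(x)$ gives the claim with $C(n,L):=C_1(n,L)$, and every constant used depends only on $n$ and $L$, never on $M$.

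\emph{Main obstacle.} All the genuine difficulty sits inside Theorem~\ref{large_vol_contra}: producing a dimension- and $L$-dependent linear bound on $\rho(\cdot,x)$ at a half-volume point draws on the machinery developed later in the paper (Cheeger--Colding volume convergence and almost rigidity, together with the uniformly convergent homotopies of Theorem~\ref{convergence_LGC}). Granting that input, the only point to verify in the reduction above is the compatibility of the threshold scale $t_0$ with the rescaling, and this is automatic precisely because Euclidean volume growth forces the half-volume hypothesis at \emph{all} scales, not merely small ones, so the normalization to scale $t_0$ is legitimate for every choice of $x$ and $r$.
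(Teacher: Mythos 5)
Your proposal is correct and follows essentially the same route as the paper: Bishop--Gromov monotonicity gives the half-volume lower bound $\mathrm{vol}(B_s(y))\ge L\,\mathrm{vol}(B_s^n(0))$ at every point and every scale, and then Theorem~\ref{large_vol_contra} (with $\kappa=0$, $\omega=L$) yields the linear contractibility bound. The only difference is that you spell out the scale-invariance/rescaling step that lets the small-radius conclusion of Theorem~\ref{large_vol_contra} cover all $r>0$, which the paper leaves implicit in ``the result follows immediately.''
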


Li and Anderson independently showed that if $M$ has Euclidean volume growth of constant $\ge L$, then $\pi_1(M)$ has order $\le 1/L$ \cite{Li,An2}; consequently, if $L>1/2$, then $M$ is simply connected. Hence Corollary \ref{quant_anderson} can be viewed as a quantitative description of the simple connectedness when $L>1/2$. We mention that Corollary \ref{quant_anderson} holds for non-collapsing Ricci limit spaces as well (see Theorem \ref{quant_limit}). The simple connectedness of open non-collapsing Ricci limit spaces is previously known only when $L$ is very close to $1$ \cite{Mu}.

With Theorem A, we can naturally generalize structure result of fundamental groups of manifolds with Ricci curvature and volume bounded below to that of non-collapsing Ricci limit spaces (see Section \ref{section_fund_group}).

We organize our paper as follows.

\tableofcontents

Acknowledgments: The authors would like to thank Professor Vitali Kapovitch for pointing out a minor error involving local covers (used in the proof of Theorem 3.5) in an earlier version of this paper. The authors would like to thank Jikang Wang for letting us know a minor error on estimating $\rho(t,x)$ from a nearby point.

\section{Convergence of spaces with controlled 1-contractibility}\label{section_conv}

We study Gromov-Hausdorff convergence of spaces with uniformly controlled module of $1$-contractibility. This is related to \cite{Bor}, where Hausdorff convergence and the contractibility of subsets are considered.

By Definition \ref{def_module}, $\rho(t,x)=L<\infty$ means that for any $\sigma>0$ and any loop $c$ contained in $B_t(x)$, there is a homotopy between $c$ and a trivial loop with the image of $H$ contained in $B_{L+\sigma}(x)$. Throughout the text, we always use a term involving $\sigma$ for this situation (for example, a term like $\sigma2^{-i}$).

In general, $\rho(t,x)$ may not be a continuous function. On the other hand, we can always bound $\rho(t,x)$ by a so called indicatrix, which is continuous \cite{Bor}.

\begin{defn}\cite{Bor}
	Let $T\in (0,1)$ and $\lambda:[0,T)\to [0,1]$ be a function. We say that $\lambda$ is an indicatrix, if $\lambda$ is continuous, non-decreasing, and concave with $\lambda(0)=0$.
\end{defn}

We always assume that $T\in(0,1)$ in this paper unless otherwise noted.

\begin{lem}\label{ind_bound}
	Let $\{\rho_\alpha(t)\}_{\alpha\in A}$ be a family of non-decreasing functions on $[0,T)$ with
	$$0=\rho_\alpha(0)=\lim\limits_{t\to 0} \rho_\alpha(t), \quad 0\le\rho_\alpha(t)\le 1 \text{ for all } t\in[0,T)$$
	for every $\alpha\in A$. Then the following two statements are equivalent:\\
	(1) There is an indicatrix $\lambda(t)$ on $[0,T)$ such that $\rho_\alpha(t)\le \lambda(t)$ for all $t\in [0,T)$ and all $\alpha\in A$.\\
	(2) The family $\{\rho_\alpha(t)\}_{\alpha\in A}$ is equally continuous at $0$.
\end{lem}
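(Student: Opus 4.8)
The plan is to prove the two implications separately; $(1)\Rightarrow(2)$ is immediate, while $(2)\Rightarrow(1)$ requires building an indicatrix by hand. For $(1)\Rightarrow(2)$ I would use only that the indicatrix $\lambda$ is continuous at $0$ with $\lambda(0)=0$: given $\epsilon>0$, choose $\delta>0$ with $\lambda(t)<\epsilon$ on $[0,\delta)$; then $|\rho_\alpha(t)-\rho_\alpha(0)|=\rho_\alpha(t)\le\lambda(t)<\epsilon$ for every $\alpha\in A$ and every $t\in[0,\delta)$, which is exactly equicontinuity of the family at $0$. Monotonicity, concavity and the bound $1$ are not needed for this direction.

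For $(2)\Rightarrow(1)$, first pass to the upper envelope $\bar\rho(t):=\sup_{\alpha\in A}\rho_\alpha(t)$, which is non-decreasing, takes values in $[0,1]$, and satisfies $\bar\rho(0)=0$. Equicontinuity at $0$ enters precisely to give $\lim_{t\to0}\bar\rho(t)=0$: the equicontinuity parameter $\delta$ for a given $\epsilon$ forces $\rho_\alpha(t)<\epsilon$ for all $\alpha$ and all $t\in[0,\delta)$, hence $\bar\rho(t)\le\epsilon$ there. Since $\rho_\alpha\le\bar\rho$, it now suffices to dominate $\bar\rho$ by an indicatrix, and I would take the least non-decreasing concave majorant
$$\lambda(t):=\inf\{\,L(t)\ :\ L\text{ affine and non-decreasing on }[0,T),\ L\ge\bar\rho\,\},$$
the infimum being over a nonempty set since the constant function $1$ qualifies. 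As an infimum of non-decreasing affine functions, $\lambda$ is automatically concave and non-decreasing, with $\bar\rho\le\lambda\le1$ on $[0,T)$, and it is continuous on $(0,T)$ because a concave function is continuous on the interior of its domain.

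It remains to verify $\lambda(0)=0$ and continuity of $\lambda$ at $0$, and both follow from one family of explicit barriers. Given $\epsilon\in(0,1]$, use $\lim_{t\to0}\bar\rho=0$ to pick $\delta\in(0,T)$ with $\bar\rho<\epsilon$ on $[0,\delta)$, and set $L_\epsilon(t):=\epsilon+\frac{1-\epsilon}{\delta}\,t$; this is affine and non-decreasing, equals $1$ at $t=\delta$, so $L_\epsilon\ge1\ge\bar\rho$ on $[\delta,T)$ and $L_\epsilon\ge\epsilon>\bar\rho$ on $[0,\delta)$, hence $L_\epsilon$ is an admissible majorant. Therefore $\lambda(0)\le L_\epsilon(0)=\epsilon$ and $\limsup_{t\to0}\lambda(t)\le\lim_{t\to0}L_\epsilon(t)=\epsilon$ for every $\epsilon>0$, which forces $\lambda(0)=0=\lim_{t\to0}\lambda(t)$. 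Thus $\lambda$ is an indicatrix and $\rho_\alpha(t)\le\bar\rho(t)\le\lambda(t)$ for all $\alpha$ and $t$.

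The only step that genuinely needs care — and hence the main obstacle — is ensuring that the least concave majorant stays within $[0,1]$ and vanishes at $0$, rather than blowing up or having a positive value there; this is exactly what the barriers $L_\epsilon$ control, and it is where both hypotheses $\bar\rho\le1$ and $\lim_{t\to0}\bar\rho=0$ are actually used. The remaining ingredients (an infimum of affine functions is concave, a concave function is continuous on an open interval) are standard.
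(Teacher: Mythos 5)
Your proof is correct and follows essentially the same strategy as the paper: pass to the upper envelope $g(t)=\sup_{\alpha\in A}\rho_\alpha(t)$, observe that equicontinuity at $0$ is exactly the statement $g(0)=\lim_{t\to 0}g(t)=0$, and then dominate $g$ by a concave, non-decreasing majorant that vanishes (and is continuous) at $0$. The only difference is that the paper outsources the existence of this majorant to Borsuk (Section 7 and (35) of \cite{Bor}), whereas you build it by hand as the infimum of affine non-decreasing majorants and control its value and limit at $0$ with the barrier lines $L_\epsilon$; this is a sound, self-contained substitute for that citation (the only trivial slip is that equicontinuity yields $\bar\rho\le\epsilon$ on $[0,\delta)$ rather than the strict inequality you state, which affects nothing since $L_\epsilon\ge\epsilon\ge\bar\rho$ there suffices).
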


\begin{proof}
	It is clear that (1) implies (2).
	
	Conversely, suppose that (2) holds. Consider $g(t)=\sup_{\alpha\in A} \rho_\alpha(t)$, which satisfies $$\lim\limits_{t\to 0}g(t)=g(0)=0$$ by assumption. By \cite[Section 7]{Bor}, there is a concave and non-decreasing function $\lambda(t)$ such that 
	$$g(t)\le \lambda(t)$$
	on $[0,T)$. It is clear that $\lambda(t)$ is continuous on $(0,T)$ because it is concave. Also, according to (35) in \cite{Bor}, $\lim_{t\to 0}\lambda(t)=0$ holds.
\end{proof}

\begin{defn}
	Let $\epsilon>0$. We say that two loops $c,c':[0,1]\to X$ are $\epsilon$-close, if $d(c(t),c'(t))\le \epsilon$ for all $t\in[0,1]$.
\end{defn}

The lemma below illustrates a relation between Gromov-Hausdorff closeness and homotopies. The method is similar to \cite{Tu,Pet}. Because we need this construction and its related estimates later, we include the proof for readers' convenience.

\begin{lem}\label{retract_homotopy}
	Given $T\in(0,1)$, there is $\epsilon_0=T/20$ such that the following holds:
	
	Let $(X,x)$ and $(Y,y)$ be two length metric spaces with the conditions below:\\
	(1) the closure of $B_2(p)$ is compact, where $p=x$ or $y$;\\
	(2) $d_{GH}((X,x),(Y,y))\le \epsilon\le\epsilon_0$;\\
	(3) for any $q\in B_{1}(x)$ and any loop $\gamma$ contained in $B_T(q)$, $\gamma$ is contractible in $B_1(q)$.\\
	Then\\
	(i) For any loop $c$ in $B_1(y)\subset Y$, there is a loop $c'$ in $X$ that is $5\epsilon$-close to $c$. Moreover, if $c''$ is another loop in $X$ that is $5
	\epsilon_0$-close to $c$, then $c''$ is free homotopic to $c'$ in $B_2(x)$.\\
	(ii) Let $c_i$ $(i=1,2)$ be a loop in $B_1(y)\subset Y$ and $c'_i$ be a loop in $X$ that is $5\epsilon$-close to $c_i$. If $c_1$ and $c_2$ are free homotopic in $B_1(y)$ via a homotopy
	$$H: S^1\times [0,1]\to B_1(y),$$
	then there is a continuous map $H':S^1\times[0,1]\to X$ such that $H'$ is a free homotopy in $B_2(x)$ between $c'_1$ and $c'_2$.
\end{lem}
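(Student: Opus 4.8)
The plan is to build $c'$ and $H'$ by hand using the Gromov--Hausdorff approximation, chopping the domain into pieces small enough that each piece lands in a tiny metric ball, and then using hypothesis (3) to fill in the combinatorial gaps. Concretely, since $d_{GH}((X,x),(Y,y))\le\epsilon$, fix an $\epsilon$-approximation $f:Y\to X$ (with $f(y)$ within $\epsilon$ of $x$) and an approximate inverse. For part (i), given a loop $c$ in $B_1(y)$, choose a partition $0=t_0<t_1<\dots<t_N=1$ so fine that $d(c(t_{j-1}),c(t_j))\le\epsilon$, set $q_j=f(c(t_j))$, note $d(q_{j-1},q_j)\le 3\epsilon$, and define $c'$ on $[t_{j-1},t_j]$ to be a minimizing geodesic (or near-minimizer, using that $X$ is a length space) from $q_{j-1}$ to $q_j$. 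Then $c'$ is a loop (since $q_0=q_N$) lying within $3\epsilon$ of the broken-geodesic interpolation of $c$, hence $5\epsilon$-close to $c$ once the partition is taken fine relative to the modulus of continuity of $c$; the factor $5$ absorbs the approximation error plus the geodesic excursions, with the $T/20$ chosen so that all the short geodesic bigons we create stay inside balls of radius $T$ around appropriate centers.

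For the uniqueness clause in (i): if $c''$ is another loop $5\epsilon_0$-close to $c$, then for corresponding parameter values $c'$ and $c''$ are within $10\epsilon_0 = T/2 < T$ of each other. I would interpolate between them by the ``staircase'' homotopy: on each sub-rectangle $[t_{j-1},t_j]\times[0,1]$ first travel along $c'$, jump across a short geodesic from $c'(t_j)$ to $c''(t_j)$, and come back along $c''$; the resulting small loops all lie in a ball $B_T(q)$ for a suitable $q\in B_1(x)$, so hypothesis (3) contracts each of them inside $B_1(q)\subset B_2(x)$, and assembling these contractions (together with the geodesic sides) gives the free homotopy in $B_2(x)$. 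This is exactly the standard ``nerve/partition-of-unity'' trick for transferring homotopies across a coarse correspondence, as in \cite{Tu,Pet}.

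Part (ii) is the same idea one dimension up. Given the homotopy $H:S^1\times[0,1]\to B_1(y)$, subdivide $S^1\times[0,1]$ into a fine grid of rectangles $R_{jk}$ so that $\mathrm{diam}\, H(R_{jk})\le\epsilon$, push each grid vertex $v$ to $f(H(v))\in X$, and extend cell-by-cell: over the $1$-skeleton use short geodesics between images of adjacent vertices (lengths $\le 3\epsilon$), and over each $2$-cell $R_{jk}$ we are handed a loop of diameter $\le c\epsilon$ in $X$ that, by hypothesis (3), bounds a disk in a ball of radius $1$ about a nearby point; glue these disks in. The boundary behavior is arranged to match $c'_1$ and $c'_2$ by first homotoping, via the uniqueness part of (i) applied on $S^1\times\{0\}$ and $S^1\times\{1\}$, the restrictions of $H'$ to the two ends to the prescribed loops $c'_1,c'_2$; since all the ingredient disks and geodesics live within distance $O(\epsilon)+1$ of the image of $H$, hence inside $B_2(x)$, the total map $H'$ has image in $B_2(x)$.

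The main obstacle — and the only place real care is needed — is bookkeeping the radii: one must verify that \emph{every} small loop produced in the construction (the geodesic bigons in (i), the cell-boundary loops in (ii), and the loops used to reconcile the ends) is genuinely contained in $B_T(q)$ for some $q\in B_1(x)$, so that (3) applies, and simultaneously that all the filling disks (which a priori only live in $B_1(q)$) stay within $B_2(x)$. This is what forces the choice $\epsilon_0=T/20$: the worst-case point we contract around is within roughly $10\epsilon_0$ of the loop plus the $\le 1$ excursion of the contraction plus the $\le\epsilon$ approximation error, and one checks $1+10\epsilon_0+\text{(lower order)}<2$ and that the loop diameters are $<T$. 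The rest is routine continuity estimates for broken geodesics in a length space and standard cell-by-cell extension of maps, so I would state the radius inequalities explicitly and leave the gluing as a diagram chase.
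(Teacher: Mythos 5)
Your proposal is correct and follows essentially the same route as the paper: discretize the loop/homotopy so each piece has image of diameter $\le\epsilon$, push vertices across the Gromov--Hausdorff approximation, join by minimal geodesics, and use hypothesis (3) to fill every resulting small loop (the staircase bigons in (i), the cell boundaries in (ii)) inside $B_1(q)\subset B_2(x)$. The only cosmetic difference is in (ii), where the paper prescribes $H'$ on $S^1\times\{0,1\}$ to be $c_1',c_2'$ from the start (absorbing the mismatch into the slightly larger $15\epsilon$ diameter bound for boundary-adjacent triangles) rather than reconciling the ends afterwards via the uniqueness clause of (i); both versions close the same bookkeeping.
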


\begin{proof}
	(i) Let $c:[0,1]\to B_1(y)$ be a loop. Because $c$ is uniform continuous, we can choose a large integer $N$ such that
	$$\mathrm{diam}(c|_{[i/N,(i+1)/N]})\le\epsilon$$
	for all $i=0,1,...,N-1$. For each $i$, we choose $q_i\in B_1(x)$ such that $d(c(i/N),q_i)\le\epsilon$. Next we connect $q_i$ to $q_{i+1}$ by a minimal geodesic for each $i$ and close it up as a loop by connecting $q_{N-1}$ to $q_0$ by a minimal geodesic. In this way, we result in a loop $c'$, as a broken geodesic, in $X$. Note that
	$$d(q_i,q_{i+1})\le d(q_i,c(i/N))+d(c(i/N),c((i+1)/N))+d(c((i+1)/N),q_{i+1})\le 3\epsilon.$$
	Re-parameterize $c'$ if necessary, we can assume that $c'(i/N)=q_i$ and
	$$d(q_i,c'(i/N+t))=tNd(q_i,q_{i+1})$$
	for $t\in[0,1/N]$. Then
	$$d(c(t),c'(t))\le d(c(t),c(i/N))+d(c(i/N),q_i)+d(q_i,c'(t))\le 5\epsilon$$
	where $i$ is chosen such that $t\in [i/N, (i+1)/N]$.
	Thus $c'$ is $5\epsilon$-close to $c$.
	
	Let $c''$ be another loop that is $5\epsilon_0$-close to $c$. Then
	$$d(c'(t),c''(t))\le 5\epsilon+5\epsilon_0\le 10\epsilon_0$$
	for all $t\in[0,1]$. Choose a large integer $L$ such that
	$$\mathrm{diam}(c'|_{[i/L,(i+1)/L]})\le \epsilon_0, \quad \mathrm{diam}(c''|_{[i/L,(i+1)/L]})\le \epsilon_0$$
	for all $i=0,1,...,L-1$. Let $l_i$ be a minimal geodesic from $c'(i/L)$ to $c''(i/L)$. Since the small loop $c'|_{[i/L,(i+1)/L]}\cdot l_{i+1}\cdot (c''|_{[i/L,(i+1)/L]})^{-1}\cdot l_i^{-1}$ is contained in $B_{6\epsilon_0}(c'(i/L))\subset B_T(c'(i/L))$, it is contractible in $B_2(x)$. We conclude that $c'$ is free homotopic to $c''$ in $B_2(x)$.
	
	(ii) We first define $H'$ on the boundary $S^1\times\{0,1\}$ so that
	$$H'(t,0)=c'_1(t),\quad H'(t,1)=c'_2(t).$$
	This implies for all $t\in[0,1]$ and $s=0$ or $1$,
	$$d(H(t,s),H'(t,s))\le 5\epsilon.$$
	We choose a finite triangular decomposition $\Sigma$ of $S^1\times [0,1]$ such that $\mathrm{diam}(H(\Delta))\le \epsilon$ for any triangle $\Delta$ of $\Sigma$. Let $K^0$ be the set of all vertices of $\Sigma$ and let $K^1$ be the $1$-skeleton of $\Sigma$. If $v\in K^0$ is in the boundary $S^1\times\{0,1\}$, we have already defined $H'(v)$. If $v\in K^0$ is not in the boundary, we define $H'(v)$ to be a point in $X$ with
	$$d(H(v),H'(v))\le\epsilon.$$
	If two vertices $v$ and $w$ of $K^0$ is connected by an edge that is not part of the boundary, then we connect $H'(v)$ and $H'(w)$ by a minimal geodesic in $X$. From this, we obtain a continuous map, which we still call $H'$, $H':K^1\to X$. Let $\Delta$ be a triangle of $\Sigma$ and let $\partial\Delta$ be its boundary. By our construction, it is direct to check that if a triangle $\Sigma$ does not have any boundary point as its vertex, then\footnote{We always use extrinsic distance to measure diameter of a subset in this paper.}
	$$\mathrm{diam}(H'(\partial\Delta))\le 5\epsilon,$$
	where $\partial\Delta$ is the $1$-skeleton of $\Delta$;
	if $\Sigma$ has one or more boundary points as vertices, then
	$$\mathrm{diam}(H'(\partial\Delta))\le 15\epsilon.$$
	In particular, $H'(\partial\Delta)$ is contained in $B_{18\epsilon}(H'(v))$ with $15\epsilon<T$, where $v$ is a vertex of $\Delta$. By assumption $H'(\partial\Delta)$, as a loop, is contractible in $B_1(v')$, thus we can extend the domain of the map $H'$ from $\partial\Delta$ to $\Delta$. Since the extension can be achieved over each $\Delta$, we result in the desired free homotopy.
\end{proof}

\begin{rem}
	In Lemma \ref{retract_homotopy}(i), the existence of $c'$ does not require condition (3).
\end{rem}

If the module of 1-contractibility of any point $q\in B_1(x)$ is bounded by an indicatrix $\lambda(t)$, then we can control the homotopy $H'$ constructed in Lemma \ref{retract_homotopy} so that $H'$ is close to $H$.

\begin{lem}\label{retract_homotopy_dist}
	Given an indicatrix $\lambda$ on $[0,T)$, there is $\epsilon_0=T/20$ and a function $\phi(\epsilon)$ with $\lim\limits_{\epsilon\to 0}\phi(\epsilon)=0$ such that the following holds:
	
	Let $(X,x)$ and $(Y,y)$ be two length metric spaces with the conditions (1) and (2) in Lemma \ref{retract_homotopy} and (3') below:\\
	(3') $\rho(t,q)\le\lambda(t)$ for all $t\in[0,T)$ and all $q\in B_1(x)$.\\
	Then for any $\sigma>0$, we can construct a free homotopy $H'$ as in Lemma \ref{retract_homotopy}(ii) satisfying $d(H(z),H'(z))\le\phi(\epsilon)+\sigma$ for all $z\in S^1\times[0,1]$.
\end{lem}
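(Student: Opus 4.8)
The plan is to revisit the construction of $H'$ in the proof of Lemma \ref{retract_homotopy}(ii) and track quantitative distance bounds throughout, using the indicatrix hypothesis (3') in place of the qualitative contractibility hypothesis (3). On the $1$-skeleton $K^1$ the map $H'$ is already within $5\epsilon$ of $H$ by the same estimates as before (vertices are placed within $\epsilon$ of $H$, edges are minimal geodesics between points whose $H$-values differ by at most $3\epsilon$, and $Y$ being a length space lets us compare to $H$ along each edge), so the only thing needing control is the extension of $H'$ over each $2$-cell $\Delta$. For such a $\Delta$ with vertex $v$, the loop $H'(\partial\Delta)$ lies in $B_{18\epsilon}(H'(v))$; since we may assume $18\epsilon < T$, hypothesis (3') gives that this loop is contractible in $B_{\rho(18\epsilon, q)+\sigma'}(q) \subseteq B_{\lambda(18\epsilon)+\sigma'}(q)$ for a suitable $q \in B_1(x)$ near $v$ and any $\sigma'>0$, where the $\sigma'$ is the usual slack from Definition \ref{def_module}.

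The key point is then to estimate, for a point $z$ in the open $2$-cell $\Delta$, the distance $d(H(z), H'(z))$. We have $d(H'(z), H'(v)) \le \lambda(18\epsilon) + \sigma' + 18\epsilon$ since $H'(z)$ lies in the contracting ball and $H'(\partial\Delta)\subset B_{18\epsilon}(H'(v))$; meanwhile $d(H(z), H(v)) \le \epsilon$ by the diameter bound on $H(\Delta)$, and $d(H(v), H'(v)) \le \epsilon$. Combining via the triangle inequality, $d(H(z), H'(z)) \le \lambda(18\epsilon) + 20\epsilon + \sigma'$ (with a slightly worse constant on the boundary cells, which is harmless). So setting $\phi(\epsilon) = \lambda(18\epsilon) + 20\epsilon$ (or any fixed constant times this, to absorb the boundary-cell cases) gives a function with $\lim_{\epsilon\to 0}\phi(\epsilon)=0$, using continuity of $\lambda$ at $0$ with $\lambda(0)=0$. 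Choosing the slack $\sigma'$ in each cell small enough — say $\sigma'\le \sigma 2^{-k}$ over an enumeration of the finitely many cells, or simply $\sigma'=\sigma$ in each cell since there are finitely many — yields the claimed bound $d(H(z),H'(z))\le \phi(\epsilon)+\sigma$.

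I expect the main (though modest) obstacle to be bookkeeping rather than conceptual: ensuring the triangulation $\Sigma$ can simultaneously be taken fine enough that $\mathrm{diam}(H(\Delta))\le\epsilon$ \emph{and} that $18\epsilon < T$ so that hypothesis (3') applies to the boundary loops, and handling the asymmetry between interior cells and cells touching $S^1\times\{0,1\}$ where $H'$ is pinned to $c'_1, c'_2$ and the diameter of $H'(\partial\Delta)$ is only controlled by $15\epsilon$ rather than $5\epsilon$. One must also confirm that the point $q$ to which (3') is applied can be chosen in $B_1(x)$: since $H'(v)$ may be close to the boundary of $B_2(x)$ this needs the loop $H'(\partial\Delta)$ to sit in a ball of radius less than $T$ about a point of $B_1(x)$, which holds because $c_1, c_2, H$ all lie in $B_1(y)$ and $H'$ stays within $5\epsilon \le 5\epsilon_0$ of $H$, so $H'(\partial\Delta) \subset B_{1+5\epsilon_0+T}(x)$ — here one should simply shrink the working radius (replace the ``$1$'' by something like $3/2$ in the setup) or note the constants were chosen with this slack in mind. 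None of this changes the essential computation; the homotopy is built cell by cell exactly as in Lemma \ref{retract_homotopy}, now with each extension constrained to a ball of radius $\lambda(18\epsilon)+\sigma'$ about $H'(v)$.
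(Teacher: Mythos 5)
Your proposal is correct and follows essentially the same route as the paper: reuse the cell-by-cell construction of $H'$ from Lemma \ref{retract_homotopy}(ii), invoke (3') to contract each small boundary loop $H'(\partial\Delta)$ inside a ball of radius $\lambda(O(\epsilon))+\sigma$ about (a point near) $H'(v)$, and conclude by the triangle inequality $d(H(z),H'(z))\le d(H(z),H(v))+d(H(v),H'(v))+d(H'(v),H'(z))$, giving $\phi(\epsilon)=\lambda(O(\epsilon))+O(\epsilon)$. The paper's proof is exactly this (with $\phi(\epsilon)=\lambda(15\epsilon)+6\epsilon$), and the minor bookkeeping points you flag (boundary cells contributing $5\epsilon$ instead of $\epsilon$, and the slack in the working radius) are handled there the same way, so the differing constants are immaterial.
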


\begin{proof}
	We continue to use the notations in the proof of Lemma \ref{retract_homotopy}(ii). Note that by assumption (3'), for any $\sigma>0$, there is a homotopy from $H'(\partial\Delta)$ to a trivial loop so that the image of the homotopy is contained in $B_{\lambda(18\epsilon)+\sigma}(H'(v))$.
	Thus for all $z\in \Delta$,
	\begin{align*} 
		d(H(z),H'(z))&\le d(H(z),H(v))+d(H(v),H'(v))+d(H'(v),H'(z))\\
		&\le \epsilon+5\epsilon+\lambda(15\epsilon)+\sigma\\
		&= \phi(\epsilon)+\sigma,
	\end{align*}
	where $\phi(\epsilon)=\lambda(15\epsilon)+6\epsilon$.
\end{proof}

With Lemma \ref{retract_homotopy_dist}, we show that locally controlled module of 1-contractibility is preserved under Gromov-Hausdorff convergence.

\begin{thm}\label{convergence_LGC}
	Let $(X_i,x_i)$ be a sequence of length metric spaces with the conditions below:\\
	(1) the closure of $B_2(x_i)$ is compact;\\
	(2) there exists an indicatrix $\lambda$ on $[0,T)$ such that for all $i$ and all $q\in B_2(x_i)$, $\rho(t,q)\le\lambda(t)<1/2$ holds on $[0,T)$;\\
	(3) $(X_i,x_i)\overset{GH}\longrightarrow (Y,y)$.\\
	Then $\rho(t,q)\le\lambda(t)$ for all $t\in[0,T)$ and all $q\in B_{3/2}(y)$.
\end{thm}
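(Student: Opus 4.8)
The plan is to fix a point $q\in B_{3/2}(y)$, a radius $t\in[0,T)$, a small $\sigma>0$, and a loop $c\colon S^1\to B_t(q)\subset Y$, and to build a free homotopy $H\colon S^1\times[0,1]\to B_{\lambda(t)+\sigma}(q)\subset Y$ from $c$ to a constant loop; letting $\sigma\downarrow 0$ then gives $\rho(t,q)\le\lambda(t)$. I would first record that $\rho(t,q)\ge t$ always, so the hypothesis $\rho\le\lambda<1/2$ forces $t\le\lambda(t)<1/2$; hence every loop and homotopy that occurs stays inside $B_{1/2}(q)\subset B_2(y)$, and — since $B_{3/2}(y)$ is open — inside some ball $B_{2-\delta_q}(x_i)$ for a fixed $\delta_q>0$ once $i$ is large, which is exactly the region where hypothesis (2) is available. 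Pick also $t'\in(t,T)$ with $\lambda(t')<\lambda(t)+\sigma/4$. After passing to a subsequence, realize all $X_i$ and $Y$ isometrically in a common metric space with $d_{GH}((X_i,x_i),(Y,y))\le\epsilon_i\downarrow 0$, let $q_i\in X_i$ correspond to $q$, and choose (via the construction in the proof of Lemma \ref{retract_homotopy}(i)) a broken-geodesic loop $c_i$ in $X_i$ that is $5\epsilon_i$-close to $c$; then $c_i\subset B_{t'}(q_i)$ for $i$ large. Fixing a large starting index $k_0$, hypothesis (2) provides a homotopy $H_{k_0}\colon S^1\times[0,1]\to B_{\lambda(t')+\sigma/4}(q_{k_0})\subset X_{k_0}$ from $c_{k_0}$ to a point.

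The main step, and the main obstacle, is transporting this homotopy to the limit. One cannot simply push $H_{k_0}$ from $X_{k_0}$ to $Y$: Lemma \ref{retract_homotopy_dist} transfers a homotopy \emph{into a space that already has a controlled module of $1$-contractibility}, and a control on $Y$ is precisely the conclusion we are after — this circularity is the essential difficulty. The remedy is to transfer the homotopy along the sequence instead of straight to $Y$. Given a continuous homotopy $H_k$ in $X_k$ from $c_k$ to a point with image in $B_{r_k}(q_k)$ (where $r_k<1/2$), I would apply Lemma \ref{retract_homotopy_dist} with $X_{k+1}$ in the role of ``$X$'' and $X_k$ in the role of ``$Y$'', based near $q$: the relevant balls have compact closures, $d_{GH}\le\epsilon_k+\epsilon_{k+1}\le T/20$ for $k$ large, and the control $\rho_{X_{k+1}}\le\lambda$ holds on the small region that actually gets used. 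This produces a continuous homotopy $H_{k+1}$ in $X_{k+1}$ from $c_{k+1}$ to a point with
$$\sup_{z}d\big(H_k(z),H_{k+1}(z)\big)\le a_{k+1}:=\phi(\epsilon_k+\epsilon_{k+1})+\sigma\,2^{-k-3},$$
where $\phi\to0$ at $0$ and depends only on $\lambda$; consequently its image lies in $B_{r_{k+1}}(q_{k+1})$ with $r_{k+1}\le r_k+a_{k+1}+O(\epsilon_k+\epsilon_{k+1})$.

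Finally I would choose the subsequence decreasing fast enough that $\sum_k\big(a_k+O(\epsilon_k)\big)<\sigma/4$ — possible because $\phi$ is a fixed function of $\lambda$ with $\phi(0)=0$. Then $(H_k)$ is uniformly Cauchy as a sequence of maps into the common metric space; since $X_k\to Y$, its uniform limit is a continuous map $H\colon S^1\times[0,1]\to Y$. By construction $H(\cdot,0)=\lim_k c_k=c$, $H(\cdot,1)$ is a single point, and the image of $H$ lies in $\overline{B_{r_\infty}(q)}$ with $r_\infty\le\lambda(t')+\sigma/4+\sigma/4<\lambda(t)+\sigma$. Hence $c$ is contractible in $B_{\lambda(t)+\sigma}(q)\subset Y$, which is what was needed. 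I expect the two delicate points to be: (i) the use of the \emph{quantitative} estimate $d(H,H')\le\phi(\epsilon)+\sigma$ of Lemma \ref{retract_homotopy_dist} — without it, the iterated transfers could drift and the limit homotopy would not exist; and (ii) the radius bookkeeping that keeps every loop and homotopy inside the region $B_2(x_i)$ where hypothesis (2) is valid, which is exactly where the precise constants ($B_{3/2}(y)$ in the conclusion, $\lambda<1/2$, $T<1$) are consumed.
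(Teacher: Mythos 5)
Your proposal is correct and follows essentially the same route as the paper's proof in Section 1: approximate $c$ by broken-geodesic loops $c_i\subset X_i$, contract one of them using hypothesis (2), then repeatedly transfer the contracting homotopy along the sequence via the quantitative estimate of Lemma \ref{retract_homotopy_dist}, and pass to the uniform limit in $Y$ with the drift summed up. The only differences are bookkeeping conventions (your $t'$ and fast subsequence with $\sum a_k<\sigma/4$ versus the paper's fixed large index $I$ and error terms of size $(3\sigma+4)2^{-I}$), which are equivalent.
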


We state the conclusion of above theorem for $B_{3/2}(y)$ instead of $B_1(y)$, because we need some extra room for later use in Section \ref{section_class}. 

The proof below is inspired by \cite{Bor}, where Hausdorff convergence and the contractibility of subsets, instead of loops, are considered. Besides these differences, compared with \cite[Section 15]{Bor}, our statement is localized; also the proof is much simplified and streamlined with the length metric space condition. Later in Section \ref{section_homotopy}, we will present a different proof of Theorem \ref{convergence_LGC}, whose strategy is more in line with our main construction.

\begin{proof}[Proof of Theorem \ref{convergence_LGC}]
	Let $\sigma>0$. Let $\epsilon_0$ and $\phi(\epsilon)$ as in Lemma \ref{retract_homotopy_dist}. We choose a decreasing sequence $\epsilon_i\to 0$ such that $\epsilon_1\le \epsilon_0/20$, $10\epsilon_i\le 2^{-i}$, and $\phi(2\epsilon_i)\le 2^{-i}$ for all $i$. Passing to a subsequence, we can assume that $$d_{GH}(B_2(x_i),B_2(y))\le\epsilon_i\to 0$$
	for all $i$.
	
	Fix $t\in (0,T)$. We fix an $I$ large so that 
	$$t+6\epsilon_I<T,\quad \lambda(t+6\epsilon_I)+(3\sigma+4)2^{-I}<1/2.$$ Let $q\in B_{3/2}(y)$ and let $c$ be a loop in $B_t(q)$. We will construct a homotopy between $c$ and a trivial loop. For each $j\ge I$, pick $q_j\in B_{3/2}(x_j)$ with $d(q_j,q)\le \epsilon_j$. 
	Let $c_j$ be a loop in $B_{3/2}(x_j)$ that is $5\epsilon_j$-close to $c$. It is clear that
	$$d(c_j(t),c_{j+1}(t))\le d(c_j(t),c(t))+d(c(t),c_{j+1}(t))\le 5\epsilon_j+5\epsilon_{j+1}\le10\epsilon_j\le 2^{-j}.$$
	In particular, $c_j$ converges uniformly to $c$. Since the image of $c_I$ satisfies
	$$\mathrm{im} (c_I)\subset B_{t+6\epsilon_I}(q_I)\subset B_T(q_I),$$
	there is a homotopy $$H_I:D\to B_{\lambda(t+6\epsilon_I)+\sigma\cdot 2^{-I}}(q_I)$$
    between $c_I$ and a trivial loop, where $D$ is the closed unit disk.
	By Lemma \ref{retract_homotopy_dist}, we can construct a homotopy between $c_{I+1}$ and a trivial loop
	$$H_{I,I+1}: D \to B_2(x_{I+1})\subset X_{I+1}$$ such that $$d(H_I(z),H_{I,I+1}(z))\le \phi(2\epsilon_I)+\sigma\cdot 2^{-I}\le (1+\sigma)2^{-I}$$
	for all $z\in D$.
    Also,
    \begin{align*}
    	d(H_{I,I+1}(z),q_{I+1})&\le d(H_{I,I+1}(z),H_I(z))+d(H_I(z),q_I)+d(q_I,q_{I+1})\\
    	&\le (1+\sigma)2^{-I}+\lambda(t+6\epsilon_I)+\sigma2^{-I}+2\epsilon_I\\
    	&\le\lambda(t+6\epsilon_I)+\sigma2^{-I}+(2+\sigma)2^{-I}.
    \end{align*}
    Thus
    $$\mathrm{im}(H_{I,I+1})\subset B_{\lambda(t+6\epsilon_I)+\sigma2^{-I}+(2+\sigma)2^{-I}}(q_{I+1}).$$
    
	In general, for $j\ge I+1$, suppose that we have constructed a homotopy $H_{I,j}$ between $c_j$ to a trivial loop with
	$$\mathrm{im} (H_{I,j})\subset B_{\lambda(t+6\epsilon_I)+\delta_{I,j}}(q_{I+1}),\quad d(H_{I,j}(z),H_{I,j-1}(z))\le (1+\sigma)2^{-(j-1)},$$
	where $$\delta_{I,j}=\sigma2^{-I}+\sum_{k=I}^j (2+\sigma)2^{-k}\le (3\sigma+4)\cdot 2^{-I}.$$
	We apply Lemma \ref{retract_homotopy_dist} again to obtain $$H_{I,j+1}:D\to B_2(x_{j+1})\subset X_{j+1}$$
	with $d(H_{I,j+1}(z),H_{I,j}(z))\le (1+\sigma)\cdot 2^{-j}$.
	It is direct to check that
	$$\mathrm{im} (H_{I,j+1})\subset B_{\lambda(t+6\epsilon_I)+\delta_{I,j+1}}(q_{I+1}).$$
	This process gives us a sequence of homotopies
	$\{H_{I,j}\}_{j=I}^\infty$.
	Since $$d(H_{I,j}(z),H_{I,j+1}(z))\le 2^{-j}(1+\sigma)$$ for all $z\in D$ and all $j\ge I$, we conclude that as $j\to\infty$, $H_{I,j}$ converges uniformly to a continuous map $H_{I,\infty}:D\to Y$, which is a homotopy between $c$ and a trivial loop. Moreover, we have
	$$\mathrm{im} (H_{I,\infty})\subset B_{\lambda(t+6\epsilon_I)+(3\sigma+4)\cdot 2^{-I}}(q).$$ 
	Noting that $I$ can be arbitrarily large and loop $c$ is arbitrary in $B_t(q)$, we see $\rho(t,q)\le \lambda(t)$.
\end{proof}

\begin{rem}\label{rem_move_homotopies}
	From the proof of Lemma \ref{retract_homotopy} and Theorem \ref{convergence_LGC}, we see two methods to move homotopies. The first method works when the target space has local contractibility (Lemma \ref{retract_homotopy}): we dissemble the homotopy into small pieces, then map the $1$-skeleton to the target space then obtain the homotopy via extensions. The second method works when there is a sequence of spaces with uniformly controlled local contractibility converging to the target space (Theorem \ref{convergence_LGC}): we transfer the homotopy along the sequence and pass it to the target space by uniform convergence.
\end{rem}

\section{Half volume lower bound and 1-contractibility}\label{section_half_volume}

In this section, we show that if the local volume is strictly larger than the half volume of a same size ball in the corresponding space form, then the module of $1$-contractibility is controlled by a linear function. As one of applications, this control implies that $\lim_{t\to 0}\rho(t,x)=0$ holds for $x$ in the limit space if $x$ has a local half volume lower bound (see Proposition \ref{large_vol_type_1}).

For $\kappa\in \mathbb{R}$ and $r>0$, we denote the $r$-ball in the $n$-dimensional space form of curvature $\kappa$ as $B^n_r(\kappa)$.

\begin{thm}\label{large_vol_contra}
	Given $n\ge 2$, $\kappa\ge 0$, and $\omega>1/2$, there exist positive constants $\epsilon(n,\kappa,\omega)$ and $C(n,\kappa,\omega)$ such that the following holds.
	
	Let $(M,p)$ be a Riemannian $n$-manifold satisfying\\
	(1) $B_2(p)\cap \partial M=\emptyset$ and the closure of $B_2(p)$ is compact,\\
	(2) $\mathrm{Ric}\ge -(n-1)\kappa$ on $B_2(p)$, $\mathrm{vol}(B_1(p))\ge \omega\cdot \mathrm{vol}(B_1^n(-\kappa)).$\\
	Then every loop in $B_r(p)$ is contractible in $B_{Cr}(p)$, where $r\in [0,\epsilon)$.
\end{thm}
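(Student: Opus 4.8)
The plan is to argue by contradiction and use a compactness/blow-up argument combined with the structure of tangent cones under a half volume lower bound. Suppose the statement fails. Then there is a sequence of Riemannian $n$-manifolds $(M_i,p_i)$ satisfying (1) and (2) with fixed $n,\kappa,\omega$, together with radii $r_i\to 0$ and loops $c_i$ in $B_{r_i}(p_i)$ that are not contractible in $B_{C_i r_i}(p_i)$ for some $C_i\to\infty$. The first step is to rescale: replace the metric on $M_i$ by $r_i^{-2}$ times the original. After rescaling, $\mathrm{Ric}\ge -(n-1)\kappa r_i^2\to 0$ on the rescaled ball $B_{2/r_i}(p_i)$, the loop $c_i$ now lies in $B_1(p_i)$, and $c_i$ is not contractible in $B_{C_i}(p_i)$. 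By relative volume comparison, the half volume lower bound $\mathrm{vol}(B_1(p_i))\ge\omega\,\mathrm{vol}(B_1^n(-\kappa))$ persists in a scale-invariant way: for the rescaled metrics one gets $\mathrm{vol}(B_s(p_i))\ge\omega'\,\mathrm{vol}(B_s^n(0))$ for all $s$ in a range growing to infinity, with $\omega'$ still $>1/2$ (shrinking $\omega$ slightly if necessary, using continuity of the comparison constant as $\kappa r_i^2\to 0$). Passing to a subsequence, the pointed rescaled spaces converge in the pointed Gromov-Hausdorff sense to a limit $(X,x)$ which is a non-collapsing Ricci limit space with $\mathrm{Ric}\ge 0$ in the limit sense, and which has Euclidean volume growth with constant $\ge\omega'>1/2$ — more precisely every ball satisfies $\mathrm{vol}(B_s(x))\ge\omega'\,\mathrm{vol}(B_s^n(0))$.

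The second step is to show the limit $X$ has trivial local (indeed global) behavior relevant to loops. Because $X$ is a non-collapsing $\mathrm{Ric}\ge 0$ limit with a global Euclidean-type volume lower bound of constant $>1/2$, I expect one can invoke the Li–Anderson-type argument (in its limit-space form, cf.\ Theorem \ref{quant_limit} and Corollary \ref{quant_anderson}) or argue directly: a nontrivial element of the revised fundamental group would produce a nontrivial deck transformation on a cover whose quotient has volume growth at most half that of $X$, contradicting $\omega'>1/2$. In particular every loop in $B_1(x)\subset X$ is contractible in $B_{R}(x)$ for some fixed finite $R=R(n,\omega')$; shrinking attention to a metric ball, $X$ has a uniformly controlled module of $1$-contractibility near $x$ at the unit scale. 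The third step is to push this contractibility back down to the sequence. Here I would use Lemma \ref{retract_homotopy}: since $d_{GH}(B_2(x_i),B_2(x))\to 0$ and the target $X$ has local contractibility at the relevant scale, a loop in $B_1(x_i)$ (such as $c_i$) is $5\epsilon_i$-close to a loop $c_i'$ in $X$, which is contractible in $B_R(x)$; then, reversing the roles and using that $X$ is the limit, transfer the nullhomotopy of $c_i'$ back to a nullhomotopy of $c_i$ in some $B_{R'}(x_i)$ for all large $i$ — this is exactly the "move homotopies via a nearby space with local contractibility" mechanism of Remark \ref{rem_move_homotopies}. This contradicts the assumption that $c_i$ is not contractible in $B_{C_i}(p_i)$ once $C_i>R'$, which holds for $i$ large since $C_i\to\infty$. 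Tracing back through the rescaling yields the linear bound $\rho(r,p)\le Cr$ with $C=C(n,\kappa,\omega)$ and a threshold $\epsilon=\epsilon(n,\kappa,\omega)$, completing the contradiction.

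The main obstacle I anticipate is the second step: showing that the half-volume (constant $>1/2$) hypothesis forces the limit space $X$ to have controlled $1$-contractibility at a definite scale, with constant depending only on $n$ and $\omega$. The Li–Anderson estimate is about the order of the fundamental group, not directly about a contractibility radius, so one must upgrade "$\pi_1$ has order $\le 1/\omega' <2$, hence is trivial" to a quantitative statement that a loop in the unit ball dies in a ball of controlled radius — this requires ruling out loops that are nontrivial only at arbitrarily small scales (the Otsu-type phenomenon), i.e.\ it needs the limit to be semi-locally simply connected with a uniform module, which is precisely the kind of statement the paper is building. A cleaner route, which I would pursue if the above circularity bites, is to run the contradiction argument \emph{directly at the level of $\rho$}: assume $\rho(r_i,p_i)/r_i\to\infty$, blow up, and derive a contradiction with the fact (to be established, perhaps via a short independent covering-space argument using Anderson's small-loops results \cite{An1} and volume convergence \cite{Co,CC2}) that a nontrivial short loop in $B_{r}(p_i)$ would, after passing to the cover generated by it, halve the volume density — impossible under $\omega>1/2$. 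Either way, the heart of the matter is converting the volume gap $\omega>1/2$ into a definite contractibility radius, uniformly along the sequence.
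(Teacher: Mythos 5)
Your main route has a genuine circularity. In step 2 you want to convert the half-volume bound on the blow-up limit $X$ into a definite $1$-contractibility radius, citing the limit-space statements (Corollary \ref{quant_anderson}, Theorem \ref{quant_limit}); but in the paper both of these are deduced \emph{from} Theorem \ref{large_vol_contra}, and as you note yourself, the Li--Anderson order bound ($\#\pi_1\le 1/\omega'<2$) says nothing about loops that are nontrivial only at small scales (the Otsu phenomenon), so it cannot by itself yield a contractibility radius. The same problem reappears in step 3: to push a nullhomotopy from the limit back to the sequence you would invoke Lemma \ref{retract_homotopy}, but its hypothesis (3) must hold for the \emph{target} space of the transferred homotopy, which here is the rescaled $M_i$; you would need the $M_i$ to be $1$-contractible at a scale $T$ uniform in $i$, and that uniformity is exactly the content of the theorem being proved. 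So as written, steps 2 and 3 both fail.

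The fix you sketch in your last paragraph --- work directly on the manifolds, pass to the cover of a local ball generated by a short noncontractible loop, and use Anderson's small-loop bound plus volume comparison to ``double the volume density,'' which is impossible when $\omega>1/2$ --- is precisely the paper's proof, and it requires no limit space and no volume convergence. Concretely, the paper first fixes $R$ and shows (Lemma \ref{large_vol_contra_lem}(1)) that loops in $B_\epsilon(p)$ are contractible in $B_R(p)$: in the universal cover $(U_i,y_i)$ of $B_R(p_i)$, the subgroup $\Gamma_i$ generated by the short loop has order $\le N(n,\kappa,\omega,R)$ by \cite{An1}, the orbit satisfies $\mathrm{diam}(\Gamma_i\cdot y_i)\to 0$, and counting the $\#\Gamma_i$ translates of $B_{R/2}(y_i)\cap F_i$ inside $B_{R/2+4N\epsilon_i}(y_i)$ gives $2\omega\,\mathrm{vol}(B^n_{R/2}(-\kappa))\le \mathrm{vol}(B^n_{R/2+4N\epsilon_i}(-\kappa))$, a contradiction since $2\omega>1$. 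The linear bound $Cr$ then comes from a second, separate contradiction argument (Lemma \ref{large_vol_contra_lem}(2)): assuming $R_i/\epsilon(n,\kappa,\omega,R_i)\to\infty$, one rescales by $R_i^{-1}$ and runs the same covering-volume argument on the rescaled manifolds themselves rather than on a blow-up limit. Your final sketch points in this direction but leaves both of these steps (the order bound, the orbit-diameter estimate, the fundamental-domain volume count, and the two-scale structure giving linearity) unexecuted, so it is not yet a proof.
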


Because of Otsu's example \cite{Ot}, if the half volume lower bound is replaced by a positive lower bound, then the conclusion of Theorem \ref{large_vol_contra} would fail.

One may compare Theorem \ref{large_vol_contra} with Grove-Petersen's result on sectional curvature:

\begin{thm}\cite{GP}
	Given $n,\kappa,v>0$, there exist positive constants $\epsilon(n,\kappa,v)$ and $C(n,\kappa,v)$ such that the following holds.
	
	Let $(M,p)$ be a complete $n$-manifold of
	$$\mathrm{sec}_M\ge -\kappa,\quad \mathrm{vol}(B_1(p))\ge v.$$
	Then $B_r(p)$ is contractible in $B_{Cr}(p)$, where $r\in [0,\epsilon)$.
\end{thm}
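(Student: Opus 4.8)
The plan is to reduce the problem to the scale-invariant statement by a contradiction-and-rescaling argument, and then to exploit precompactness in the Gromov--Hausdorff topology together with the fact that the limit of the rescaled sequence is an Alexandrov space, which is locally contractible. Suppose the conclusion fails for some fixed $n,\kappa,v$. Then for every choice of constants $\epsilon=C=j$ (with $j\to\infty$) there is a complete $n$-manifold $(M_j,p_j)$ with $\mathrm{sec}\ge-\kappa$ and $\mathrm{vol}(B_1(p_j))\ge v$, and a radius $r_j\in[0,1/j)$ such that $B_{r_j}(p_j)$ is \emph{not} contractible inside $B_{jr_j}(p_j)$. First I would rescale the metric on $M_j$ by $r_j^{-2}$, obtaining manifolds $(\widetilde M_j,p_j)$ with $\mathrm{sec}\ge-\kappa r_j^2\to 0$; on the rescaled manifolds the unit ball $B_1(p_j)$ is not contractible in $B_j(p_j)$, and a standard volume-comparison argument (Bishop--Gromov, using the lower volume bound at scale $1$ in the original metric together with $r_j\to 0$) gives a uniform lower bound on the rescaled volume of, say, $B_1(p_j)$; in fact the volume ratios at all fixed scales blow up, so the pointed sequence is noncollapsing at every scale.

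Next I would pass to a subsequence converging in the pointed Gromov--Hausdorff topology, $(\widetilde M_j,p_j)\to(X,x)$. By Gromov's precompactness theorem this limit exists, and since the lower sectional curvature bounds converge to $0$, the limit $X$ is a (complete) Alexandrov space of curvature $\ge 0$. By Perelman's stability/local contractibility theorem for Alexandrov spaces, $X$ is locally contractible; quantitatively, there is a function $\delta\mapsto\rho(\delta,x)$ with $\rho(\delta,x)\to 0$ as $\delta\to 0$, and in particular some fixed $R_0<\infty$ and $\delta_0>0$ with the property that $B_{\delta_0}(x)$ is contractible inside $B_{R_0}(x)$. I would then invoke the convergence machinery already developed in this paper: Lemma~\ref{retract_homotopy} (and the refinement Lemma~\ref{retract_homotopy_dist}) let one transport a contracting homotopy from $X$ back to the approximating spaces $\widetilde M_j$, provided the source space $\widetilde M_j$ has the requisite local contractibility to realize the $1$-skeleton extension. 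Concretely: for $j$ large, $B_{\delta_0/2}(p_j)$ is $\epsilon_j$-close to $B_{\delta_0/2}(x)$ with $\epsilon_j\to 0$, and a loop in $B_1(p_j)\subset B_{\delta_0/2}(p_j)$ maps to an $\epsilon_j$-close loop in $X$, which is contractible in $B_{R_0}(x)$; pushing the homotopy back gives contractibility of the loop, and indeed of $B_1(p_j)$, inside $B_{2R_0}(p_j)$ — contradicting the choice of $r_j$ once $j>2R_0$.

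There is one genuine subtlety, which I expect to be the main obstacle: to run the homotopy-transport argument one needs the approximating manifolds $\widetilde M_j$ themselves to have some uniform local $1$-contractibility at small scales, i.e.\ a common indicatrix, otherwise the extension step over each simplex in the proof of Lemma~\ref{retract_homotopy} has no place to live. For manifolds with a \emph{sectional} curvature lower bound this is supplied for free by the Rauch comparison theorem / injectivity-radius-type estimates: on a manifold with $\mathrm{sec}\ge-\kappa$ and a noncollapsing volume bound, balls of sufficiently small radius are contractible with a uniform rate (this is essentially the classical local geometry that underlies Grove--Petersen), and this uniformity survives rescaling since $\kappa r_j^2\to 0$. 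So the correct order of steps is: (i) establish, from $\mathrm{sec}\ge-\kappa r_j^2$ and the volume bound, a uniform indicatrix $\lambda$ for the family $\{\widetilde M_j\}$ valid on some $[0,T)$; (ii) apply Theorem~\ref{convergence_LGC} to conclude $\rho(t,x)\le\lambda(t)$ on $[0,T)$ for the limit $X$ — so the limit has controlled $1$-contractibility; (iii) combine with Alexandrov local contractibility (or simply iterate) to contract the relevant ball in $X$ at a definite scale; (iv) transport back via Lemma~\ref{retract_homotopy_dist}, using the indicatrix from (i) on the $\widetilde M_j$ side, to contradict the failure hypothesis. Steps (i) and the verification that the estimates in Lemmas~\ref{retract_homotopy}--\ref{convergence_LGC} are genuinely scale-invariant (so that the blow-up does not disturb them) are where the real work lies; the rest is bookkeeping.
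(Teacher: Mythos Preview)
The paper does not prove this statement; it is quoted from Grove--Petersen [GP] solely for comparison with Theorem~\ref{large_vol_contra}, and no argument is supplied here. The original [GP] proof is a direct comparison-geometric construction (Toponogov triangle comparison feeding into a critical-point / center-of-mass type retraction), with no passage to Gromov--Hausdorff limits.

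Your contradiction--rescaling--limit approach has two genuine gaps.

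First, step (i) is circular. To run Lemma~\ref{retract_homotopy_dist} or Theorem~\ref{convergence_LGC} you need a uniform indicatrix on the rescaled manifolds $\widetilde M_j$, and you propose to extract it from ``Rauch comparison / injectivity-radius-type estimates''. But a lower sectional curvature bound together with a volume lower bound does \emph{not} yield an injectivity-radius lower bound (Cheeger's lemma requires a two-sided curvature bound), and the assertion that balls of radius $r$ contract inside balls of radius $Cr$ with $C=C(n,\kappa,v)$ \emph{is} the Grove--Petersen theorem. You yourself concede that this input ``is essentially the classical local geometry that underlies Grove--Petersen'': that is the theorem, not a lemma toward it.

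Second, even granting (i), the machinery you invoke handles only $1$-contractibility. Lemmas~\ref{retract_homotopy} and \ref{retract_homotopy_dist} and Theorem~\ref{convergence_LGC} transport homotopies of \emph{loops} by filling $2$-cells; they say nothing about contracting an entire ball. The statement here is that $B_r(p)$ itself is contractible in $B_{Cr}(p)$, i.e.\ a deformation of the identity map on $B_r(p)$ to a constant. Transporting a contraction of $B_{\delta_0}(x)\subset X$ back to $\widetilde M_j$ would require filling cells of every dimension $\le n$, hence a uniform module of $k$-contractibility for all $k$ on the sequence --- this is Petersen's full $LGC(\rho)$ framework in [Pet], not the $k=1$ case developed in this paper. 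At best your argument, if repaired, would show that every loop in $B_r(p)$ contracts in $B_{Cr}(p)$, which is strictly weaker than the stated conclusion.
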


\begin{lem}\label{large_vol_contra_lem}
	Given $n, \kappa\ge 0$, $R\in(0,1)$, and $\omega>1/2$, there exists a positive constant $\epsilon(n,\kappa,\omega,R)$ such that the following holds.\\
	(1) Let $(M,p)$ be a Riemannian $n$-manifold with the conditions in Theorem \ref{large_vol_contra}.
	Then every loop in $B_\epsilon(p)$ is contractible in $B_{R}(p)$.\\
	(2) $\limsup\limits_{R\to 0^+} \dfrac{R}{\epsilon(n,\kappa,\omega,R)}<\infty$.
\end{lem}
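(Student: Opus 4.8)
The plan is to prove (1) first by a contradiction/compactness argument, and then extract the quantitative bound (2) by inspecting how the contradiction argument degenerates as $R\to 0$. For (1), suppose the statement fails for some fixed $n,\kappa,\omega,R$. Then there is a sequence of Riemannian $n$-manifolds $(M_i,p_i)$ satisfying the hypotheses of Theorem \ref{large_vol_contra} — in particular $B_2(p_i)\cap\partial M_i=\emptyset$ with compact closure, $\mathrm{Ric}\ge-(n-1)\kappa$ on $B_2(p_i)$, and the half volume lower bound $\mathrm{vol}(B_1(p_i))\ge\omega\cdot\mathrm{vol}(B_1^n(-\kappa))$ — together with loops $c_i$ in $B_{1/i}(p_i)$ that are \emph{not} contractible in $B_R(p_i)$. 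By Gromov precompactness and the volume lower bound (non-collapsing), after passing to a subsequence $(M_i,p_i)\overset{GH}\longrightarrow(X,p)$ for some non-collapsing Ricci limit space $X$; by Cheeger--Colding volume convergence the limit still carries the half volume lower bound $\mathcal{H}^n(B_1(p))\ge\omega\cdot\mathrm{vol}(B_1^n(-\kappa))$ with $\omega>1/2$. The key point is then that a \emph{half volume lower bound passes to every tangent cone}, and since non-collapsing tangent cones are metric cones (Cheeger--Colding \cite{CC1}), the half volume lower bound forces the tangent cone at $p$ to be $\mathbb{R}^n$ itself: a metric cone $C(Z)$ with $\mathcal{H}^n$-density $\ge\omega>1/2$ of Euclidean density must have $Z=S^{n-1}$, hence the cone is $\mathbb{R}^n$. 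This is where we should feed in the Anderson/Li rigidity flavour already cited in the paper (a density $>1/2$ is incompatible with any nontrivial quotient $S^{n-1}/\Gamma$ or any non-round cross-section), so $X$ is in fact a manifold point near $p$ with a neighborhood homeomorphic to a ball — in particular a small metric ball $B_\delta(p)$ is contractible inside $B_R(p)$ for the \emph{limit}. Now push this back to the sequence: a loop $c_i\subset B_{1/i}(p_i)$, being $5\epsilon_i$-close (in the sense of Lemma \ref{retract_homotopy}) to a short loop $c$ in $B_\delta(p)$, is free-homotopic in $B_R(p_i)$ to $c_i$ by Lemma \ref{retract_homotopy}(ii) applied to the contraction of $c$ in $X$; hence $c_i$ is contractible in $B_R(p_i)$ for large $i$, contradicting the choice of $c_i$. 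This proves (1).

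For part (2), the quantitative assertion $\limsup_{R\to 0^+}R/\epsilon(n,\kappa,\omega,R)<\infty$, the point is that as $R\to 0$ the geometry of $B_R(p)$ under scaling by $R^{-1}$ converges, by the half volume lower bound and the rigidity just established, to the \emph{flat} ball $B_1^n(0)\subset\mathbb{R}^n$ — uniformly over all $(M,p)$ in the hypothesis class, not just along a single sequence. Concretely, one runs the argument above in rescaled form: if $\epsilon(n,\kappa,\omega,R_k)/R_k\to 0$ along some $R_k\to 0$, pick bad examples $(M_k,p_k)$ with non-contractible loops in $B_{\epsilon_k}(p_k)$ that stay non-contractible in $B_{R_k}(p_k)$, rescale the metric on $B_{R_k}(p_k)$ by $R_k^{-2}$; the rescaled manifolds have $\mathrm{Ric}\ge-(n-1)\kappa R_k^2\to 0$, unit ball with near-Euclidean volume, and hence Gromov--Hausdorff subconverge to $B_1^n(0)$ (here Cheeger--Colding's almost-rigidity / volume-cone-implies-metric-cone, plus the volume convergence, do the work). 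On $B_1^n(0)$ every loop in a ball $B_t$ of radius $t$ is contractible in $B_{Ct}$ with $C$ an absolute constant (indeed $C=1$), so by Lemma \ref{retract_homotopy}(ii) the rescaled bad loops — which live in balls of radius $\epsilon_k/R_k\to 0$ — become contractible in $B_{1/2}$ of the rescaled metric, i.e.\ in $B_{R_k/2}(p_k)$, contradicting non-contractibility in $B_{R_k}(p_k)$. Therefore $\epsilon(n,\kappa,\omega,R)/R$ stays bounded below as $R\to0$, which is exactly (2).

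The main obstacle is the rigidity step: turning ``half volume lower bound on $B_1(p)$'' into genuine smallness of the module of $1$-contractibility at scale $R$, \emph{uniformly in the hypothesis class and with no collapsing escape}. One has to be careful that the half volume lower bound is genuinely inherited by the Gromov--Hausdorff limit and by its tangent cones (this uses lower semicontinuity / continuity of volume under non-collapsed convergence, $\mathrm{vol}(B_1)\le\mathrm{vol}(B_1^n(-\kappa))$ monotonicity à la Bishop--Gromov to control the normalization as the curvature bound rescales to $0$), and that the cone with density $>1/2$ is forced to be $\mathbb{R}^n$ rather than merely to have a manifold point — the latter is enough for local contractibility of the limit but one still needs the effective radius $\delta$ to not shrink faster than $R$, which is precisely why the rescaled formulation in (2) is the clean way to organize it. A secondary technical annoyance is bookkeeping the constants $\epsilon_0=T/20$, the $5\epsilon$-closeness, and the requirement $\mathrm{diam}$ of small loops be $<T$ so that Lemma \ref{retract_homotopy}(ii) applies with target ball $B_R$; but these are routine once $R$ is fixed and $i$ (or $k$) is large.
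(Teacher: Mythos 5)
Your proposal is not a repair of the paper's argument but a different route through the limit space, and it has two gaps that I believe are fatal, both located exactly at the point that makes this lemma nontrivial. First, the rigidity step is wrong. Bishop--Gromov plus volume convergence does give density $\ge\omega>1/2$ at $p$ in the limit, but a metric cone $C(Z)$ with density in $(1/2,1)$ need not be $\mathbb{R}^n$: the cone over a round sphere of radius $r<1$ with $r^{n-1}>1/2$ is itself a non-collapsed Ricci limit whose tangent cone at the vertex has density in $(1/2,1)$, yet it is not Euclidean space. More importantly, even complete knowledge of the tangent cone does not yield the local topology of $X$: unlike the Alexandrov case there is no Perelman-type stability theorem here (this is stressed in the introduction; cf.\ Menguy's examples and the non-uniqueness of tangent cones), so your assertion that loops in some $B_\delta(p)\subset X$ contract inside $B_R(p)$ is unjustified --- and in the paper the corresponding statement about the limit (Proposition \ref{large_vol_type_1}) is \emph{deduced from} Theorem \ref{large_vol_contra}, i.e.\ from the lemma you are proving, so invoking it here is circular. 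Second, your transfer of the contraction from $X$ back to $M_i$ runs Lemma \ref{retract_homotopy}(ii) in the wrong direction: that lemma rebuilds the homotopy inside the space satisfying the local $1$-contractibility hypothesis (3), at a scale $T$ with $d_{GH}\le T/20$. To push a homotopy from the limit into $M_i$ you would need a contractibility scale $T_i$ for $M_i$ dominating the GH-distance $\epsilon_i$ uniformly in $i$ --- precisely the uniform control the lemma is meant to establish, and which genuinely fails without the half-volume hypothesis (Otsu's examples have shorter and shorter non-contractible loops). The same issues recur in your part (2), where in addition the rescaled unit balls only have volume $\ge\omega\,\mathrm{vol}(B_1^n(0))$ with $\omega$ possibly barely above $1/2$, so they do not subconverge to the flat ball.

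For contrast, the paper never uses the limit space: a loop in $B_{\epsilon_i}(p_i)$ that is non-contractible in $B_R(p_i)$ generates a nontrivial subgroup $\Gamma_i\le\pi_1(B_R(p_i),p_i)$ whose order is at most $N(n,\kappa,\omega,R)$ by Anderson's short-loop argument; since the loop is short, the orbit $\Gamma_i\cdot y_i$ in the universal cover of $B_R(p_i)$ has diameter $\le 4N\epsilon_i\to0$, and packing the $\#\Gamma_i\ge 2$ translates of $B_{R/2}(y_i)\cap F_i$ (with $F_i$ a Dirichlet domain) into $B_{R/2+4N\epsilon_i}(y_i)$ gives $2\omega\,\mathrm{vol}(B^n_{R/2}(-\kappa))\le\mathrm{vol}(B^n_{R/2+4N\epsilon_i}(-\kappa))\to\mathrm{vol}(B^n_{R/2}(-\kappa))$, contradicting $\omega>1/2$; part (2) is the same argument applied to $B_{R_i}(p_i)$ rescaled by $R_i^{-1}$. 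The half-volume hypothesis thus enters as a volume-doubling obstruction in a local universal cover, not as a rigidity statement about tangent cones, and no homotopy ever has to be transported from the limit back to the manifolds.
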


\begin{proof}
	(1) To find $\epsilon(n,\kappa,\omega,R)$, we argue by contradiction. Then we would have a sequence $(M_i,p_i)$ of
	$$\mathrm{Ric}_{M_i}\ge -(n-1)\kappa,\quad \mathrm{vol}(B_1(p_i))\ge \omega\cdot \mathrm{vol}(B_1^n(-\kappa));$$
	Moreover, each $B_{\epsilon_i}(p_i)$ contains a loop $\gamma_i$ that is not contractible in $B_R(p_i)$. Note that by relative volume comparison, we have
	$$\mathrm{vol}(B_R(p))\ge \dfrac{\mathrm{vol}(B_1(p))}{\mathrm{vol}(B_1^n(-\kappa))}\cdot\mathrm{vol}(B_R^n(-\kappa))\ge \omega\cdot\mathrm{vol}(B_R^n(-\kappa))$$
	for all $R\in (0,1)$.
	Passing to a subsequence, we assume $(M_i,p_i)\overset{GH}\longrightarrow(X,p)$. Without lose of generality, we also assume that each $\gamma_i$ has length less than $4\epsilon_i$.
	
	Let $(U_i,y_i)$ be the universal covering space of $(B_R(p_i),p_i)$ with covering group $H_i=\pi_1(B_R(p_i),p_i)$. Let $\Gamma_i$ be the subgroup generated by $\gamma_i$. By \cite{An1}, each $\Gamma_i$ is a finite group with order $\le N(n,\kappa,\omega,R)$ (The statement in \cite{An1} is about compact manifolds; its proof extends clearly to local balls).
	Since $d(\gamma_iy_i,y_i)\le 4\epsilon_i$ and $\gamma_i$ has order $\le N$ for all $i$, it follows that $$\mathrm{diam}(\Gamma_i\cdot y_i)=N\cdot 4\epsilon_i\to 0.$$
	Then
	\begin{align*}
		2\cdot\omega\cdot\mathrm{vol}(B_{R/2}^n(-\kappa))&\le \# \Gamma_i\cdot \mathrm{vol}(B_{R/2}(p_i))\\
		&= \# \Gamma_i\cdot \mathrm{vol}(B_{R/2}(y_i)\cap F_i)\\
		&= \sum_{\gamma\in \Gamma_i} \mathrm{vol}(\gamma\cdot(B_{R/2}(y_i)\cap F_i))\\
		&\le \mathrm{vol}(B_{R/2+4N\epsilon_i}(y_i))\\
		&\le \mathrm{vol}(B_{R/2+4N\epsilon_i}^n(-\kappa))\\
		&\to \mathrm{vol}(B_{R/2}^n(-\kappa)),
	\end{align*}
	where $F_i$ is the Dirichlet domain centered at $y_i$. This clearly leads to a contradiction since $2\omega>1$.
	We complete the proof of (1).
	
	(2) Suppose that there is $R_i\to 0$ with $R_i/\epsilon_i\to \infty$, where $\epsilon_i=\epsilon(n,\kappa,\omega,R_i)$. This means that we can find a sequence of $n$-manifolds $(M_i,p_i)$ with the conditions below:\\
	(i) $\mathrm{Ric}_{M_i}\ge -(n-1)\kappa,\  \mathrm{vol}(B_1(p_i))\ge \omega\cdot \mathrm{vol}(B_1^n(-\kappa));$\\
	(ii) for each $i$ there is a loop in $B_{2\epsilon_i}(p_i)$ but not contractible in $B_{R_i}(p_i)$.\\
	We rescale the sequence $(B_{R_i}(p_i),p_i)$ by $R_i^{-1}$. This rescaled sequence
	$$(R_i^{-1}B_{R_i}(p_i),p_i)$$
	satisfies\\
	(i') $\mathrm{Ric}\ge -(n-1)R_i^{-2}\kappa\to 0$, and the unit ball centered at $p_i$ has volume
	$$R_i^{-n}\mathrm{vol}(B_{R_i}(p_i))\ge R_i^{-n}\cdot\omega\cdot\mathrm{vol}(B_{R_i}^n(-\kappa))= \omega\cdot\mathrm{vol}(B_1^n(-R_i^{-2}\kappa))\to\omega\cdot\mathrm{vol}(B_1^n(0)).$$
	(ii') for each $i$ there is a loop $\gamma_i$ contained in the ball of radius $2\epsilon_iR_i^{-1}(\to 0)$ and $\gamma_i$ is not contractible in the concentric unit ball.\\
	We apply the argument used in (1) once again and result in a contradiction.
\end{proof}

\begin{proof}[Proof of Theorem \ref{large_vol_contra}]
	Let 
	$$L(n,\kappa,\omega):=\limsup\limits_{R\to 0^+} \dfrac{R}{\epsilon(n,\kappa,\omega,R)}<\infty$$ as in Lemma \ref{large_vol_contra_lem}(2). Choose $R_0>0$ such that $R/\epsilon(n,\kappa,\omega,R)\le 2L$ holds for all $R\le R_0$. Then for any loop $\gamma$ contained in $B_r(p)$, where $r=R/(2L)\le R_0/(2L)$, since $r\le\epsilon(n,\kappa,\omega,R)$, $\gamma$ must be contractible in $B_R(p)=B_{2Lr}(p)$.
\end{proof}

Corollary \ref{quant_anderson} follows directly from Theorem \ref{large_vol_contra}.

\begin{proof}[Proof of Corollary \ref{quant_anderson}]
	By relative volume comparison,
	$$\dfrac{\mathrm{vol}(B_R(x))}{\mathrm{vol}(B_R^n(0))}\ge L$$
	holds for all $x\in M$ and all $R>0$. Together with Theorem \ref{large_vol_contra}, the result follows immediately.
\end{proof}

\begin{cor}
	Given $n,\rho>0$ and $\omega\in (1/2,1]$, there is $\epsilon(n,\rho,\omega)>0$ such that the following holds:
	
	Let $M_i$ ($i=1,2$) be a compact Riemannian $n$-manifold with
	$$\mathrm{Ric}_{M_i}\ge -(n-1),\quad \mathrm{vol}(B_\rho(x))\ge \omega \cdot \mathrm{vol}(B_\rho^n(-1))$$
	for all $x\in M_i$.
	If
	$$d_{GH}(M_1,M_2)\le\epsilon,$$
	then $\pi_1(M_1)$ and $\pi_1(M_2)$ are isomorphic.
\end{cor}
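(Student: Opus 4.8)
The plan is in two stages: first, promote the pointwise volume hypothesis to a \emph{uniform linear bound} on the $1$-contractibility of both manifolds, using Theorem~\ref{large_vol_contra}; then run a soft Gromov--Hausdorff stability argument for $\pi_1$ whose only geometric input is that uniform bound.

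For the reduction, rescale the metric of each $M_i$ by $\rho^{-2}$. Since $M_i$ is closed, the hypotheses become $\mathrm{Ric}\ge-(n-1)\rho^2$ on $M_i$ and $\mathrm{vol}(B_1(x))\ge\omega\cdot\mathrm{vol}(B_1^n(-\rho^2))$ for every $x\in M_i$, which are precisely the conditions of Theorem~\ref{large_vol_contra} (with $\kappa=\rho^2$) at each point. Applying that theorem and undoing the rescaling yields constants $\epsilon_1=\epsilon_1(n,\rho,\omega)>0$ and $C_1=C_1(n,\rho,\omega)\ge1$ such that, for $i=1,2$ and all $x\in M_i$, every loop in $B_r(x)$ is contractible in $B_{C_1r}(x)$ whenever $r\in[0,\epsilon_1)$. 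Put $r_1:=\epsilon_1/(2C_1)$, so that every loop of diameter $<r_1$, based anywhere in either manifold, bounds a disk of diameter $<\epsilon_1$.

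The second stage uses a combinatorial model for $\pi_1$. Given a compact length space $Z$ with the above $1$-contractibility property, a finite net $T\subset Z$ of scale $\delta<r_1/100$, and a threshold $a\in[3\delta,\,r_1/3)$, let $\mathcal R_a(Z,T)$ be the $2$-complex with vertex set $T$, an edge for every pair of net points at distance $<a$, and a $2$-cell for every triple pairwise at distance $<a$. Using the uniform local $1$-contractibility exactly as in the last paragraph of the proof of Lemma~\ref{retract_homotopy}(i) --- approximate a loop of $Z$ by an edge-loop of the net; approximate a null-homotopy by a map whose small ``net-triangles'' are the $2$-cells of $\mathcal R_a$ --- one checks that fixing once and for all a minimal geodesic for each admissible pair and a disk for each admissible triple produces a map $\mathcal R_a(Z,T)\to Z$ inducing an isomorphism $\pi_1(\mathcal R_a(Z,T))\cong\pi_1(Z)$, compatibly with enlarging $a$ within the admissible window. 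Now take $\epsilon=\epsilon(n,\rho,\omega)$ to be a small fraction of $r_1$ and suppose $d_{GH}(M_1,M_2)\le\epsilon$ with an $\epsilon$-approximation $\phi\colon M_1\to M_2$. Fix a $10\epsilon$-net $S=\{x_j\}$ of $M_1$; then $S':=\{\phi(x_j)\}$ is a net of $M_2$ of scale $\le13\epsilon$, and $\phi$ distorts pairwise distances by at most $2\epsilon$. Choosing admissible thresholds $a<b$ with $b-a>2\epsilon$, the vertex bijection $x_j\leftrightarrow\phi(x_j)$ carries every simplex of $\mathcal R_a(M_1,S)$ to a simplex of $\mathcal R_b(M_2,S')$ and symmetrically; these relabelings, matched with the identifications of the previous paragraph, induce homomorphisms $\pi_1(M_1)\to\pi_1(M_2)$ and $\pi_1(M_2)\to\pi_1(M_1)$ whose composites are inclusion-induced maps between $\mathcal R$-complexes of different admissible thresholds, hence the identity. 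Thus the two homomorphisms are mutually inverse isomorphisms, and $\pi_1(M_1)\cong\pi_1(M_2)$.

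The only place the new result enters is the reduction; everything afterward is standard. The main obstacle is the combinatorial-model step: establishing that the $\mathcal R_a$-complex of a sufficiently fine net has the correct fundamental group, \emph{uniformly in the scale} and with the naturality in $a$ needed for the composites above to be identities --- together with the bookkeeping that keeps all net scales and thresholds inside the single admissible window $[3\delta,\,r_1/3)$ as one passes back and forth through $\phi$, which is exactly what forces $\epsilon$ to be chosen small in terms of $n$, $\rho$, and $\omega$.
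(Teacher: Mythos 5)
Your first stage is exactly the paper's first step: Theorem~\ref{large_vol_contra} (applied after rescaling so that the scale-$\rho$ volume hypothesis fits the normalization of that theorem) gives a uniform linear bound $\rho(t,x)\le Ct$ on $[0,T)$ for all $x$ in either manifold, with $T,C$ depending only on $n,\rho,\omega$. Where you diverge is the second stage: the paper finishes in one line by citing \cite[Corollary 2.3]{SW1}, which is precisely the stability statement ``two Gromov--Hausdorff close compact length spaces with a common uniform bound on 1-contractibility have isomorphic fundamental groups,'' whereas you reprove that black box from scratch via the 2-skeleton of a Rips-type complex $\mathcal R_a(Z,T)$ on a fine net, with simplicial comparison maps between the complexes of $M_1$ and $M_2$ at staggered thresholds and the observation that the composites are threshold-inclusions, hence identities on $\pi_1$. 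Your route is correct in substance and is essentially the classical mechanism behind such stability theorems (in the spirit of \cite{Pet} and \cite{Tu}, on which the cited corollary rests), so what it buys is self-containedness at the cost of the bookkeeping you yourself flag: proving $\pi_1(\mathcal R_a(Z,T))\cong\pi_1(Z)$ uniformly in the admissible window requires not only discretizing a null-homotopy into triples pairwise within $a$, but also the ``ladder'' homotopy identifying an edge-loop with the re-discretization of its geodesic realization inside the \emph{same} threshold (this needs the stars-of-endpoints trick or a slight threshold increase absorbed by your naturality-in-$a$ setup), plus the usual basepoint/conjugation remark so that composites being inclusion-induced really forces mutually inverse isomorphisms. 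None of these is an obstruction --- they are standard --- but they are exactly the content of the result the paper quotes, so your proof is longer while the paper's is a citation; the geometric novelty in both treatments is confined to the identical first stage.
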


\begin{proof}
	By Theorem \ref{large_vol_contra}, there are $T,C>0$, depending on $n$, $\rho$, and $\omega$, such that $\rho(t,x)\le C\cdot t$ for all  $t\in [0,T)$ and all $x\in M_i$ ($i=1,2$). The result follows immediately from \cite[Corollary 2.3]{SW1}.
\end{proof}

Now we use Theorem \ref{large_vol_contra} to study non-collapsing Ricci limit space. Let $X$ be a space with the assumptions in Theorem \ref{main_local}. By \cite{CC2}, $B_2(p)\subset X$ has Hausdorff dimension $n$.

\begin{defn}
	Let $x\in B_1(p)\subset X$, we define
	$$\omega(x)=\lim\limits_{r\to 0} \dfrac{\mathcal{H}^n(B_r(x))}{\mathrm{vol}(B^n_r(0))},$$
	where $\mathcal{H}^n$ is the $n$-dimensional Hausdorff measure in $X$.
\end{defn}

By relative volume comparison, this limit always exists and belongs to $(0,1]$. In fact, it is clear that $\omega(x)$ has a uniform positive lower bound for all $x\in B_1(p)$. 

\begin{prop}\label{large_vol_type_1}
	Let $x\in B_1(p)$ with $\omega(x)>1/2$. Then $\lim_{t\to 0}\rho(t,x)=0$.
\end{prop}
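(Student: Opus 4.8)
The plan is to deduce this from Theorem \ref{large_vol_contra} together with the volume convergence theorem of Cheeger--Colding--Colding, by producing a sequence of manifolds converging to a neighborhood of $x$ whose base points have a uniform half-volume lower bound. Fix $x\in B_1(p)$ with $\omega(x)>1/2$, and pick $\omega'$ with $1/2<\omega'<\omega(x)$. By definition of $\omega(x)$ there is $s_0>0$ such that $\mathcal H^n(B_s(x))\ge \omega'\,\mathrm{vol}(B^n_s(0))$ for all $s\le s_0$; by relative volume comparison (on $X$, which inherits the comparison from the sequence), in fact $\mathcal H^n(B_s(x))\ge \omega'\,\mathrm{vol}(B^n_s(-1))$ for all $s\le s_1$, some $s_1\in(0,s_0]$. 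Rescaling we may as well work at a fixed small scale; the point is that $x$ has a genuine half-volume lower bound on a definite ball.

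Next I would transfer this to the sequence. Choose $x_i\in M_i$ with $x_i\to x$. Since $(M_i,p_i)\to(X,p)$ with the non-collapsing and Ricci bounds of Theorem \ref{main_local}, Cheeger--Colding's volume convergence for non-collapsing Ricci limits (\cite{Co,CC2}) gives $\mathrm{vol}(B_s(x_i))\to \mathcal H^n(B_s(x))$ for each fixed $s$. Hence for $s=s_1$ and all $i$ large, $\mathrm{vol}(B_{s_1}(x_i))\ge \omega''\,\mathrm{vol}(B^n_{s_1}(-1))$ with $1/2<\omega''<\omega'$. Rescaling the balls $B_{s_1}(x_i)$ to unit size, we obtain a sequence of (incomplete) $n$-manifolds satisfying hypotheses (1) and (2) of Theorem \ref{large_vol_contra} with $\kappa=s_1^2\le 1$ and $\omega=\omega''>1/2$ (boundary avoidance and precompactness of $B_2$ of the base point hold after shrinking $s_1$, using assumption (1) of Theorem \ref{main_local}). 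Therefore there are constants $T,C>0$, depending only on $n$ and $\omega''$, such that $\rho(t,q)\le C t$ for all $t\in[0,T)$ and all $q$ in a definite ball around $x_i$ in the rescaled manifolds, i.e.\ after unrescaling, $\rho(t,q)\le C t$ on $[0,s_1 T)$ for all $q\in B_{s_1}(x_i)$ and all large $i$.

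Now apply the convergence theorem for controlled $1$-contractibility, Theorem \ref{convergence_LGC}: the rescaled balls around $x_i$ have a common indicatrix bound $\lambda(t)=\min\{C t,\,1/2-\delta\}$ on a common interval, their closures of $B_2$ are compact, and they converge (after rescaling) to a neighborhood of $x$ in $X$. The theorem yields $\rho(t,x)\le \lambda(t)$ for $t$ small, hence $\rho(t,x)\le C' t$ on some $[0,T')$ after unrescaling, and in particular $\lim_{t\to 0}\rho(t,x)=0$.

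The main obstacle is the bookkeeping around incompleteness and rescaling: one must make sure that after rescaling $B_{s_1}(x_i)$ to unit size, the hypotheses "$B_2(\text{basepoint})\cap\partial M_i=\emptyset$ and $\overline{B_2(\text{basepoint})}$ compact" and "$\mathrm{Ric}\ge -(n-1)$ on $B_2$" of Theorems \ref{large_vol_contra} and \ref{convergence_LGC} are genuinely satisfied, which forces $s_1$ to be chosen small (so that the rescaled $B_2$ sits inside $B_1(p_i)\subset B_2(p_i)$, where the hypotheses of Theorem \ref{main_local} apply), and one must track that the rescaled Ricci lower bound $-(n-1)s_1^2$ is $\ge -(n-1)$. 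A secondary point is verifying that $X$ satisfies relative volume comparison and that the length-space hypothesis of Theorem \ref{convergence_LGC} holds for the rescaled balls (each $B_{s_1}(x_i)$ with its induced length metric is a length space, and the limit is too); both are standard for non-collapsing Ricci limits.
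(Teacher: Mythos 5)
Your overall route is the same as the paper's (volume convergence to transfer the density bound $\omega(x)>1/2$ to the points $x_i$, Theorem \ref{large_vol_contra} for a linear bound on the module of $1$-contractibility, then Theorem \ref{convergence_LGC} to pass it to the limit), but there is a genuine gap at the step where you assert that $\rho(t,q)\le Ct$ holds ``for all $q$ in a definite ball around $x_i$'', and in particular for all $q\in B_{s_1}(x_i)$. Theorem \ref{large_vol_contra} is a statement about the basepoint only: it controls loops in balls centered at the point where the hypothesis $\mathrm{vol}(B_1(p))\ge\omega\,\mathrm{vol}(B_1^n(-\kappa))$ is verified. You verify the half-volume bound only at $x_i$ (at scale $s_1$), so the theorem gives you only $\rho(t,x_i)\le Ct$; but Theorem \ref{convergence_LGC} requires a uniform indicatrix for all $q$ in a whole ball around $x_i$, so you must apply Theorem \ref{large_vol_contra} at every nearby point $q$, which in turn requires a half-volume lower bound for balls centered at $q$. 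This is not automatic: for $q$ at distance comparable to $s_1$ from $x_i$, the inclusion $B_{s_1}(x_i)\subset B_{2s_1}(q)$ only yields $\mathrm{vol}(B_{2s_1}(q))\ge\omega''\,\mathrm{vol}(B_{s_1}^n(-1))\approx 2^{-n}\omega''\,\mathrm{vol}(B_{2s_1}^n(-1))$, far below $1/2$, so the claim for all $q\in B_{s_1}(x_i)$ is unjustified and in general false at that scale.

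The missing ingredient is exactly the two-radius argument in the paper's proof: besides $s_0$, choose a much smaller $d_0$ with $\mathrm{vol}(B_{s_0+d_0}^n(-1))/\mathrm{vol}(B_{s_0}^n(-1))<1+\epsilon$; then for $y\in B_{d_0}(x_i)$, the inclusion $B_{s_0}(x_i)\subset B_{s_0+d}(y)$ (with $d=d(x_i,y)\le d_0$) together with monotonicity of the volume ratio gives
$\mathrm{vol}(B_{s_0}(y))\ge \frac{\omega(x)-\epsilon}{1+\epsilon}\,\mathrm{vol}(B_{s_0}^n(-1))$ with $\frac{\omega(x)-\epsilon}{1+\epsilon}>1/2$, uniformly in $i$ large and $y\in B_{d_0}(x_i)$. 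With that, Theorem \ref{large_vol_contra} applies at every such $y$, producing the uniform linear bound $\rho(t,y)\le Ct$ on $[0,T)$ over the (small but definite) ball $B_{d_0}(x_i)$, and then Theorem \ref{convergence_LGC} finishes as you intend. So your architecture is correct, but the step you elide is the actual technical content of the paper's proof and must be supplied.
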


\begin{proof}
	For a fixed small $\epsilon>0$ with
	$$\omega':=\dfrac{\omega(x)-\epsilon}{1+\epsilon}>\dfrac{1}{2},$$
	there are $s_0,d_0>0$ such that
	$$\left|\dfrac{\mathcal{H}^n(B_{s_0}(x))}{\mathrm{vol}(B^n_{s_0}(-1))}-\omega(x)\right|\le\epsilon/2,\quad \dfrac{\mathrm{vol}(B_{s_0+d_0}^n(-1))}{\mathrm{vol}(B_{s_0}^n(-1))}<1+\epsilon.$$
	By volume convergence \cite{Co,CC2}, there is $N>0$ such that for all $i\ge N$, we have
	$$\left|\dfrac{\mathrm{vol}(B_{s_0}(x_i))}{\mathrm{vol}(B^n_{s_0}(-1))}-\omega(x)\right|\le\epsilon.$$
	Let $y\in B_1(x_i)$ with $d=d(x_i,y)\le d_0$. When $t<s_0+d$ and $i\ge N$, we have
	\begin{align*}
		\omega(x)-\epsilon&\le\dfrac{\mathrm{vol}(B_{s_0}(x_i))}{\mathrm{vol}(B^n_{s_0}(-1))}\\
		&= \dfrac{\mathrm{vol}(B_{s_0}(x_i))}{\mathrm{vol}(B_{s_0+d}^n(-1))}\cdot \dfrac{\mathrm{vol}(B_{s_0+d}^n(-1))}{\mathrm{vol}(B_{s_0}^n(-1))}\\
		&\le \dfrac{\mathrm{vol}(B_{s_0+d}(y))}{\mathrm{vol}(B_{s_0+d}^n(-1))}\cdot (1+\epsilon)\\
		&\le \dfrac{\mathrm{vol}(B_{t}(y))}{\mathrm{vol}(B_{t}^n(-1))}\cdot(1+\epsilon)
	\end{align*}
	Put $t=s_0$, thus for all $i\ge N$ and $y\in B_{d_0}(x_i)$,
	$$\mathrm{vol}(B_{s_0}(y))\ge \omega'\cdot\mathrm{vol}(B_{s_0}^n(-1))$$
	By Theorem \ref{large_vol_contra}, there are $T,C>0$, depending on $n,s_0$, and $\omega'$, such that for any $i\ge N$ and any $y\in B_{d_0}(x_i)$, $\rho(t,y)\le C t$ holds for $t\in [0,T)$. By Theorem \ref{convergence_LGC}, in the limit we have $\rho(t,x)\le C t$ for $t\in [0,T)$.
\end{proof}

\begin{rem}\label{rem_omega_lower}
	By a similar volume estimate presented above, it is clear that if $\omega(x)>\omega_0$, then there is $r>0$ such that $\omega(y)>\omega_0$ for all $y\in B_r(x)$.
\end{rem}

\section{Classification of points by $1$-contractibility}\label{section_class}

We classify points in the limit space by the module of $1$-contractibility from the sequence.

\begin{defn}\label{def_types}
	Let $(X_i,p_i)$ be a sequence of length metric spaces converging to $(X,p)$. Let $x\in X$.
	
	We say that $x$ is of \textit{type I}, if there is $r>0$ such that the family of functions $\{\rho(q,t)|q\in B_r(x_i),i\in \mathbb{N}\}$ is equally continuous at $t=0$, where $x_i\in X_i$ converging to $x$.
	
	We say that $x$ is of \textit{type II}, if there is $x_i\in X_i$ converging to $x$ so that $\{\rho(x_i,t)\}_i$ is not equally continuous at $t=0$.
	
	We say that $x$ is of \textit{type III}, if it is not of type I nor type II.
\end{defn}

The following lemma assures that Definition \ref{def_types} is well-defined because it does not depend on the choice of $x_i$.

\begin{lem}\label{type_well_defined}
	Let $x_i,x'_i\in X_i$ and $x\in X$ with
	$$x_i\to x, \quad x'_i\to x$$
	as $i\to \infty$. Then $\{\rho(t,x_i)\}_i$ is equally continuous at $t=0$ if and only if $\{\rho(t,x'_i)\}$ is equally continuous at $t=0$. 
\end{lem}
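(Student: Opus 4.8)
The plan is to reduce the statement to the purely metric fact that $d(x_i,x_i')\to 0$, and then to show that a small loop near $x_i'$ can be "slid over" to a small loop near $x_i$ at a controlled cost, using exactly the triangle-inequality/broken-geodesic surgery already developed in Lemma \ref{retract_homotopy}(i) and its proof. Concretely, since $x_i\to x$ and $x_i'\to x$, we have $\delta_i := d(x_i,x_i')\to 0$. Fix $t\in(0,T)$ and a loop $c$ contained in $B_t(x_i')$. For $i$ large, $B_t(x_i')\subset B_{t+\delta_i}(x_i)$, so $\rho(t+\delta_i,x_i)$ already gives a contraction of $c$ inside $B_{\rho(t+\delta_i,x_i)+\sigma}(x_i)\subset B_{\rho(t+\delta_i,x_i)+\sigma+\delta_i}(x_i')$ for any $\sigma>0$. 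Hence
$$\rho(t,x_i')\le \rho(t+\delta_i,x_i)+\delta_i,$$
and symmetrically $\rho(t,x_i)\le\rho(t+\delta_i,x_i')+\delta_i$. This pair of inequalities is the entire engine of the argument.

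From here I would run the standard equicontinuity bookkeeping. Suppose $\{\rho(t,x_i)\}_i$ is equally continuous at $0$; by Lemma \ref{ind_bound} there is an indicatrix $\lambda$ on $[0,T')$ (for some $T'\le T$) with $\rho(t,x_i)\le\lambda(t)$ for all $i$ and all $t\in[0,T')$. Given $\eta>0$, pick $s>0$ with $\lambda(2s)<\eta/2$; then for all $i$ large enough that $\delta_i<\min\{s,\eta/2\}$, and all $t\le s$, the displayed inequality yields
$$\rho(t,x_i')\le\lambda(t+\delta_i)+\delta_i\le\lambda(2s)+\delta_i<\eta.$$
The finitely many remaining small indices are handled individually since each $\rho(\cdot,x_i')\to 0$ as $t\to 0$ (this is part of the hypothesis built into Definition \ref{def_types}/Lemma \ref{ind_bound}, applied to each single space $X_i$, whose balls have compact closure). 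Shrinking $s$ to absorb those finitely many functions gives equicontinuity of $\{\rho(t,x_i')\}_i$ at $0$. By symmetry the converse holds, which is exactly the claim.

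The only genuine subtlety — and the step I'd expect to have to state carefully rather than the step that is hard — is the very first inequality: one must make sure that a homotopy realizing $\rho(t+\delta_i,x_i)$ genuinely has image in a metric ball about $x_i$, so that enlarging the radius by $\delta_i$ re-centers it at $x_i'$; this is immediate from Definition \ref{def_module} and the remark following it about the role of $\sigma$. One should also note that the inequality requires $t+\delta_i<T$, which holds for $i$ large once $t<T$ is fixed, so there is no issue near the endpoint of $[0,T)$. No use of the Ricci or volume hypotheses, and no Gromov--Hausdorff machinery beyond $d(x_i,x_i')\to0$, is needed; the lemma is a soft statement about how $\rho$ responds to moving the basepoint a little.
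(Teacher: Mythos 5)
Your engine is the same as the paper's: the basepoint--shift inequality $\rho(t,x_i')\le\rho(t+d_i,x_i)+d_i$ (with $d_i=d(x_i,x_i')\to0$), which the paper uses in contrapositive form and you run directly; that inequality is correctly derived, and the detour through Lemma \ref{ind_bound} is unnecessary (equicontinuity at $0$ already gives you the uniform smallness you extract from the indicatrix).

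The one genuine problem is your treatment of the finitely many indices with $d_i$ not yet small. You dismiss them by asserting that each individual $\rho(\cdot,x_i')\to0$ as $t\to0$, claiming this ``is part of the hypothesis built into Definition \ref{def_types}/Lemma \ref{ind_bound} \dots whose balls have compact closure.'' No such hypothesis exists: Lemma \ref{ind_bound} \emph{assumes} $\lim_{t\to0}\rho_\alpha(t)=0$ rather than proving it, Definition \ref{def_types} asserts nothing about single indices, and compactness of balls does not prevent a length space from having Hawaiian-earring-type points, at which $\rho(t,\cdot)$ stays bounded away from $0$ (or is infinite) for all $t>0$. Under the literal pointwise reading of ``equally continuous at $t=0$'' this is exactly where the content sits for those indices, and it cannot be justified in the stated generality; your inequality only controls $\rho(t,x_{i}')$ by $\rho(t+d_{i},x_{i})$ at the \emph{fixed} scale $d_{i}$, which is useless for a fixed bad index. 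The paper's proof avoids the issue by arguing the contrapositive: failure of equal continuity is read as producing $\epsilon>0$, $t_j\to0$ and a subsequence $x_{i(j)}$ (indices tending to infinity) with $\rho(t_j,x_{i(j)})\ge\epsilon$, whence $d_{i(j)}\to0$ and the shift inequality gives $\rho(t_j+d_{i(j)},x'_{i(j)})\ge\epsilon-d_{i(j)}$ with $t_j+d_{i(j)}\to0$. If you adopt that reading (or restrict to the intended application, where the $X_i$ are manifolds and hence locally contractible, which does supply the individual continuity at $0$), your argument closes; as written, the sentence handling the small indices appeals to a fact that is not available.
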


\begin{proof}
	Suppose that the family $\{\rho(t,x_i)\}_i$ is not equally continuous at $t=0$, then there are $\epsilon>0$, $t_j\to 0$, and a subsequence $x_{i(j)}$ such that
	$$\rho(t_j,x_{i(j)})\ge \epsilon$$
	for all $j$. Let $d_i=d(x_i,x'_i)\to 0$. We have
	$$\epsilon \le \rho(t_j,x_{i(j)})\le \rho(t_j+d_{i(j)},x'_{i(j)})+d_{i(j)}.$$
	Since $t_j+d_{i(j)}\to 0$ as $j\to\infty$, we see that the family $\{\rho(t,x'_i)\}_i$ is not equally continuous at $t=0$ as well. This completes the proof.
\end{proof}

Note that by Definition \ref{def_types}, the set of type I points is open in $X$. Due to Otsu's example, type II points in general may exist under the assumption of Theorem A. For a type III point $x$ and a sequence $x_i\in X_i$ converging to $x$, by definition the family $\{\rho(t,x_i)|i\in\mathbb{N}\}$ is equally continuous at $t=0$. Since for any $r>0$, the family $\{\rho(q,t)|q\in B_r(x_i),i\in\mathbb{N}\}$ is not equally continuous at $t=0$, this implies that the closure of $B_r(x)$ must contain a point of type II. Hence any type III point must be a limit point of the set of type II points.

\begin{cor}\label{type_1_ok}
	Let $(X_i,p_i)$ be a sequence of length metric spaces with the conditions below:\\
	(1) the closure of $B_2(p_i)$ is compact;\\
	(2) $(X_i,p_i)\overset{GH}\longrightarrow (Y,p)$.\\
	If $x\in B_{3/2}(p)$ is of type I, then $\lim\limits_{t\to 0} \rho(t,x)=0$.
\end{cor}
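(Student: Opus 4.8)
The plan is to reduce Corollary~\ref{type_1_ok} to Theorem~\ref{convergence_LGC} by exploiting the equicontinuity built into the definition of a type~I point. First I would unpack the definition: since $x$ is of type~I, there is some radius $r>0$ and a sequence $x_i\to x$ such that the family $\{\rho(t,q)\mid q\in B_r(x_i),\ i\in\mathbb{N}\}$ is equally continuous at $t=0$. Each individual function $\rho(t,q)$ is non-decreasing in $t$ and, because $X_i$ is a length space with $\overline{B_2(p_i)}$ compact, satisfies $\rho(0,q)=\lim_{t\to 0}\rho(t,q)=0$ for $q$ in the relevant region; so the hypotheses of Lemma~\ref{ind_bound} are met (after restricting the functions to a small interval $[0,T)$ and, if necessary, truncating their values at $1$, which does not affect the small-$t$ behavior we care about). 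Applying the equivalence in Lemma~\ref{ind_bound}, the equicontinuity at $0$ yields an indicatrix $\lambda$ on some $[0,T)$ with $\rho(t,q)\le\lambda(t)$ for all $t\in[0,T)$ and all $q\in B_r(x_i)$, all $i$.

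Next I would set up a rescaled/relocalized sequence so that Theorem~\ref{convergence_LGC} applies literally. Fix a scale $s>0$ small enough that $2s<r$ and $\lambda(s t)<1/2$ for $t\in[0,T)$ — concretely, replace the metric spaces $(X_i,x_i)$ by $(s^{-1}X_i, x_i)$ (rescaling distances by $s^{-1}$), which rescales the module of $1$-contractibility to $\rho_{\mathrm{new}}(t,q)=s^{-1}\rho(st,q)$; choosing $s$ small makes the corresponding indicatrix $\lambda_{\mathrm{new}}(t):=s^{-1}\lambda(st)$ satisfy $\lambda_{\mathrm{new}}(t)<1/2$ on $[0,T)$, since $\lambda$ is concave with $\lambda(0)=0$ so $\lambda(st)/s\to 0$ uniformly as $s\to 0$ on any fixed compact interval away from issues at the endpoint — here one uses $\lambda(st)\le (t)\lambda(s)$ type concavity bounds. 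After this rescaling, $B_2(x_i)$ in the new metric sits inside $B_r(x_i)$ in the old metric (for $s$ small), so condition (2) of Theorem~\ref{convergence_LGC} holds: $\rho_{\mathrm{new}}(t,q)\le\lambda_{\mathrm{new}}(t)<1/2$ for all $i$ and all $q\in B_2(x_i)$. Condition (1), compactness of $\overline{B_2(x_i)}$, follows from compactness of $\overline{B_2(p_i)}$ after choosing $s$ small (as $x\in B_{3/2}(p)$, hence $x_i\in B_{3/2+o(1)}(p_i)$, the rescaled $B_2(x_i)$ is contained in $B_2(p_i)$). Condition (3), GH-convergence $(s^{-1}X_i,x_i)\to(s^{-1}Y,x)$, is immediate from pointed GH-convergence $(X_i,p_i)\to(Y,p)$ together with $x_i\to x$.

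Then Theorem~\ref{convergence_LGC} gives $\rho_{\mathrm{new}}(t,q)\le\lambda_{\mathrm{new}}(t)$ for all $t\in[0,T)$ and all $q\in B_{3/2}$ of the rescaled limit; in particular at $q=x$. Rescaling back, this says $\rho(t,x)\le\lambda(t)$ for $t$ in a neighborhood of $0$ in the original metric on $Y$. Since $\lambda$ is an indicatrix, $\lim_{t\to 0}\lambda(t)=\lambda(0)=0$, and therefore $\lim_{t\to 0}\rho(t,x)=0$, which is the claim.

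The main obstacle I anticipate is bookkeeping with the rescaling and the choice of $s$ and $T$: one must verify that the rescaled indicatrix can genuinely be pushed below $1/2$ on a uniform interval $[0,T)$ using only concavity and $\lambda(0)=0$, and that after rescaling the balls $B_2(x_i)$ really are contained in the region $B_r(x_i)$ where the bound $\rho\le\lambda$ is valid — this is where the hypothesis $x\in B_{3/2}(p)$ (rather than $B_2(p)$) and the freedom to pass to a subsequence with $d(x_i,x)\to 0$ get used. A secondary point to be careful about is that the functions in Lemma~\ref{ind_bound} are required to be bounded by $1$ and to vanish at $0$; the vanishing at $0$ for $q$ near $x_i$ is not automatic for arbitrary length spaces, but it is part of the implicit setup here (each $X_i$ in our applications is a manifold, so $\rho(t,q)\to 0$ as $t\to 0$), and in any case one only needs the $\limsup$ as $t\to 0$ to be $0$, which is exactly the equicontinuity hypothesis. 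Once these technicalities are handled, the argument is a direct composition of Lemma~\ref{ind_bound} and Theorem~\ref{convergence_LGC}.
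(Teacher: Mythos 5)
Your overall route is exactly the paper's: the definition of type I gives equicontinuity at $t=0$ of $\{\rho(t,q)\mid q\in B_r(x_i),\ i\in\mathbb{N}\}$, Lemma \ref{ind_bound} converts this into an indicatrix bound $\rho(t,q)\le\lambda(t)$, and Theorem \ref{convergence_LGC} (applied after relocalizing/rescaling so that the bound holds on the required $2$-balls) passes the bound to the limit point $x$, whence $\rho(t,x)\le\lambda(t)\to 0$. The paper's proof is just these two citations, with the rescaling left implicit, so your making it explicit is fine and in the spirit of the remark following Theorem \ref{type_3_ok}.

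However, one step of your rescaling bookkeeping is wrong as stated: you claim that choosing $s$ small forces $\lambda_{\mathrm{new}}(t)=s^{-1}\lambda(st)<1/2$, citing ``$\lambda(st)\le t\lambda(s)$ type concavity bounds'' and $\lambda(st)/s\to 0$. Concavity with $\lambda(0)=0$ gives the opposite behavior: $\lambda(st)=\lambda(s\cdot t+(1-s)\cdot 0)\ge s\,\lambda(t)$ for $s\in(0,1)$, so $s^{-1}\lambda(st)\ge\lambda(t)$; rescaling never shrinks the indicatrix, and for $\lambda(t)=\sqrt{t}$ one has $s^{-1}\lambda(st)=\sqrt{t/s}\to\infty$ as $s\to 0$. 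The point of the rescaling is only to fit the local ball $B_r(x_i)$ into the $B_2(x_i)$ required by hypothesis (2) of Theorem \ref{convergence_LGC} (so $s$ is fixed by $r$, e.g.\ $s=r/2$); the condition $\lambda_{\mathrm{new}}<1/2$ is then obtained not by varying $s$ but simply by shrinking the interval: $\lambda_{\mathrm{new}}$ is continuous, non-decreasing, concave with $\lambda_{\mathrm{new}}(0)=0$, hence an indicatrix, and one restricts to $[0,T')$ with $T'$ small enough that $\lambda_{\mathrm{new}}<1/2$ there, which is all the theorem needs. With that one-line repair (and your truncation remark handling the $[0,1]$ normalization in Lemma \ref{ind_bound}), the argument is correct and coincides with the paper's proof.
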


\begin{proof}
	Since $x$ is of type I, by Lemma \ref{ind_bound} there is an indicatrix $\lambda(t)$ on $[0,T)$ such that for all $i$ and all $y\in B_r(x_i)$, $\rho(t,y)\le \lambda(t)$ holds on $[0,T)$. By Theorem \ref{convergence_LGC}, $x$ satisfies $\rho(t,x)\le \lambda(t)$ on $[0,T)$ and the result follows.
\end{proof}

Also, for $X$ with the conditions in Theorem \ref{main_local}, by the proof of Proposition \ref{large_vol_type_1}, any point $x\in B_{3/2}(p)$ with $\omega(x)>1/2$ is of type I.

We will prove the theorem below on type III points in Section \ref{section_homotopy}.

\begin{thm}\label{type_3_ok}
	Let $(X_i,p_i)$ be a sequence of length metric spaces with the conditions below:\\
	(1) the closure of $B_2(p_i)$ is compact;\\
	(2) $(X_i,p_i)\overset{GH}\longrightarrow (X,p)$.\\
	Suppose that $\lim_{t\to 0}\rho(t,x)=0$ holds for all points $x$ of type II in $B_{3/2}(p)$, then it holds for all points of type III in $B_1(p)$. Consequently, $\lim_{t\to 0}\rho(t,x)=0$ holds for all $x\in B_1(p)$.	
\end{thm}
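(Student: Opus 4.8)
The plan is to handle a type III point $x\in B_1(p)$ by interpolating between two regimes: on scales where we can find a good loop near $x_i\in X_i$, we transfer a contracting homotopy from the sequence as in Theorem \ref{convergence_LGC}; on scales where the sequence behaves badly, we fall back on the hypothesis that $\lim_{t\to 0}\rho(t,x)=0$ for nearby type II points and on local contractibility in the limit as in Lemma \ref{retract_homotopy}. The starting observation is the one flagged in the introduction: for a type III point $x$, the family $\{\rho(t,x_i)\}_i$ \emph{is} equally continuous at $0$ (otherwise $x$ would be type II), so by Lemma \ref{ind_bound} there is an indicatrix $\lambda$ with $\rho(t,x_i)\le\lambda(t)$ for all $i$; the failure of type I is only a failure of \emph{uniform} control in a \emph{neighborhood} $B_r(x_i)$. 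So the obstruction is localized: as $r,t\to0$ one may encounter points $q_i\in B_r(x_i)$ with $\rho(t,q_i)$ large, and by the remark following Lemma \ref{type_well_defined} these bad points accumulate at type II points of $X$, where by hypothesis $\rho(\cdot,x)$ is well controlled in the limit.

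The main construction would be a single composite homotopy built scale by scale. Fix $\sigma>0$ and a small loop $c$ in $B_t(x)$ for $t$ small. I would produce a decreasing sequence of scales $t=t_0>t_1>\cdots$ and, at each stage, either (a) push the current loop inward using a homotopy lifted from $X_i$ for suitable $i=i(k)$ — legitimate precisely on the portion of the loop lying within a controlled neighborhood of $x_{i(k)}$, using Lemma \ref{retract_homotopy_dist} to keep the homotopy $C\cdot 2^{-k}$-close to its sequence model — or (b) when near a type II point $z$, invoke $\lim_{s\to0}\rho(s,z)=0$ together with Theorem \ref{convergence_LGC}/Corollary \ref{type_1_ok} applied around $z$ (type II points adjacent to $x$ are handled by hypothesis; type I points are handled by Corollary \ref{type_1_ok}) to contract the relevant sub-loop in a ball of radius $o(t_k)$. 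Summing a geometrically decaying sequence of displacements $\sum_k C2^{-k}$ and radii, the composite converges uniformly to a homotopy in $X$ contracting $c$ inside $B_{\psi(t)}(x)$ with $\psi(t)\to0$; since $\sigma$ and $c$ are arbitrary, $\lim_{t\to0}\rho(t,x)=0$. The final sentence of the theorem then follows: $B_1(p)$ is covered by type I points (done, Corollary \ref{type_1_ok}), type II points (hypothesis), and type III points (just shown).

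The hard part will be step (b) and, more precisely, \emph{deciding at each scale which mechanism applies to which arc of the loop} and patching the pieces together continuously. A small loop near a type III point may simultaneously see bad behavior in the sequence (no uniform $\rho$-bound near $x_i$, and possibly no short loops to build local covers, unlike the type II argument sketched in the introduction) and only asymptotic control in the limit, so neither Lemma \ref{retract_homotopy}'s ``dissemble and extend'' method nor Theorem \ref{convergence_LGC}'s ``transfer along the sequence'' method works globally on a fixed scale. The delicate bookkeeping is to triangulate the annular region swept out between scales $t_k$ and $t_{k+1}$, decide for each cell whether its image is close enough to a type II/type I point (use that mechanism) or lies in a region of the sequence with adequate $\rho$-control (use the other), and to arrange that adjacent cells using different mechanisms agree on their shared $1$-skeleton with only an $O(2^{-k})$ error. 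I expect this patching — essentially proving that the two methods in Remark \ref{rem_move_homotopies} can be glued along a common boundary — to be where the real work of Section \ref{section_homotopy} lies; the convergence estimates and the final covering argument are then routine given the framework already set up in Sections \ref{section_conv}--\ref{section_class}.
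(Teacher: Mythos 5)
Your high-level strategy --- build the contraction cell by cell, using the sequence where it is controlled and the limit where it is not, with geometrically decaying errors and a continuity argument at the end --- is indeed the shape of the actual proof (the skeleton-refinement construction and the mixed extension lemma play exactly the role you anticipate). But the proposal leaves the decisive ingredient unsupplied, and the decision rule you do sketch would not work as stated. You assign the ``limit'' mechanism to cells close to type II (or type I) points and the ``sequence'' mechanism to cells lying ``in a region of the sequence with adequate $\rho$-control''. Two problems. First, near a type II point $z$ the hypothesis gives only pointwise convergence $\lim_{s\to0}\rho(s,z)=0$; at a fixed stage of the construction you must contract loops of a definite size inside balls of uniformly small radius, over all cells handled at that stage, and pointwise control at type II points carries no uniformity. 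Second, ``adequate $\rho$-control in the sequence'' is not something your setup can certify: for a type III point the indicatrix $\lambda$ bounds $\rho(t,x_i)$ only at the base points $x_i$, not on any neighborhood $B_r(x_i)$ --- that is precisely the failure of type I --- so after the first contraction step the small loops produced are no longer anchored at controlled points, and your mechanism (a) (``the portion of the loop lying within a controlled neighborhood of $x_{i(k)}$'') has nothing to stand on.

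The missing idea is to stratify not by type but by the size of $\limsup_{t\to 0}\rho(t,\cdot)$: set $\Omega(L_j)=\{w\in \overline{B_{1.2}(p)} \mid \limsup_{t\to0}\rho(t,w)>L_j\}$ for a sequence $L_j\downarrow 0$. By the hypothesis on type II points and Corollary \ref{type_1_ok}, $\Omega(L_j)$ consists of type III points, and its accumulation points still satisfy $\limsup_{t\to 0}\rho(t,\cdot)\ge L_j$, hence are again type III (Lemma \ref{equal_contra_3_lem}); consequently the family of modules $\rho(\cdot,z_i)$ over all sequence points $z_i$ lying $\epsilon_i$-close to $\Omega(L_j)$ is equally continuous at $0$ and bounded by an indicatrix $\Lambda_j$ (Lemma \ref{equal_contra_3}) --- this is what legitimizes contracting in the sequence exactly where the limit is bad. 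On the complementary region, points at distance at least $s_j$ from $\Omega(L_j)$, a compactness argument produces a single scale $T_j$ with $\rho(T_j,y)\le L_j+2^{-j}$ for all such $y$ (Lemma \ref{control_annulus}); this is the uniform statement your ``near a type II point'' step needs but does not have. With these two uniform controls the cell-by-cell decision is made by position relative to $\Omega(L_j)$: cells whose boundary image meets the far region are filled directly in $X$ at scale $T_j$ with size about $L_j+2^{-j}$, while cells trapped within $s_j$ of $\Omega(L_j)$ are transferred to $X_{i(j+1)}$, contracted there using $\Lambda_j(2s_j)\le 2^{-j}$, and only their refined $1$-skeleton is brought back for the next stage; continuity of the limit map is then Lemma \ref{ext_plus_conv}. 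Without the $\Omega(L_j)$ stratification and the two uniformity lemmas, the step you yourself flag as ``the hard part'' is a genuine gap, not routine bookkeeping.
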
 

Assuming that Theorem \ref{type_3_ok} is true, then we can prove $\lim_{t\to 0} \rho(t,x)=0$ holds for all $x\in B_1(p)$ in the context of Theorem \ref{main_local} (see Theorem 3.5 below). Later in Section \ref{section_ratio}, we will strengthen the conclusion to $\lim_{t\to 0} \rho(t,x)/t=1$, which completes the proof of Theorem \ref{main_local}.

Note that if Theorem \ref{type_3_ok} is true, then the statement also holds if one replace $B_{3/2}(p)$ and $B_1(p)$, by balls $B_{r}(z)$ and $B_{2r/3}(z)$, respectively, where $r\in(0,1)$ and $B_{r}(z)\subset B_{3/2}(p)$. This follows directly from a rescale of the metric.

\begin{thm}\label{main_pre}
	Let $(M_i,p_i)$ be a sequence of Riemannian $n$-manifolds converging to $(X,p)$ in the Gromov-Hausdorff topology with the conditions below:\\
	(1) $B_2(p_i)\cap \partial M_i=\emptyset$ and the closure of $B_2(p_i)$ is compact;\\
	(2) $\mathrm{Ric}\ge -(n-1)$ on $B_2(p_i)$, and $\mathrm{vol}(B_1(p_i))\ge v>0$.\\
	Then for any $x\in B_1(p)$, $\lim_{t\to 0} \rho(t,x)=0$ holds.
\end{thm}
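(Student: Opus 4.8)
The plan is to reduce Theorem~\ref{main_pre} to the already-stated classification machinery, proving $\lim_{t\to 0}\rho(t,x)=0$ for all $x\in B_1(p)$ by induction on the volume density $\omega(x)$. First I would observe that by the relative volume comparison and the non-collapsing hypothesis, there is a uniform lower bound $\omega_0>0$ with $\omega(x)\ge\omega_0$ for all $x\in B_1(p)$, and that $\omega(x)\in(0,1]$. The strategy is to run the induction ``downward'' in volume: for a parameter $a\in(0,1]$, let $P(a)$ be the assertion that $\lim_{t\to 0}\rho(t,x)=0$ holds for every point $x$ (in any space arising as such a limit, after rescaling) with $\omega(x)\ge a$. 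The base case $a>1/2$ is exactly Proposition~\ref{large_vol_type_1}. The key point is that the induction step roughly halves the volume threshold each time: if $P(a)$ holds then $P(a/2\cdot(1-\delta))$ holds for suitable $\delta$, so after finitely many steps we pass below $\omega_0$ and conclude.

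For the induction step, fix $x$ with $\omega(x)$ between $(a/2)(1+\epsilon)$-ish and $a$, and pick $x_i\in M_i$ converging to $x$. Classify $x$ as type I, II, or III with respect to this sequence. If $x$ is type I, then $\lim_{t\to 0}\rho(t,x)=0$ is immediate from Corollary~\ref{type_1_ok}. If $x$ is type II, then along some sequence of scales there are loops $\gamma_i\subset B_{\epsilon_i}(x_i)$ with $\epsilon_i\to 0$ that are ``non-contractible on a definite scale'', and following the outline in the Introduction one uses these small loops (via Anderson's finiteness result \cite{An1}, exactly as in the proof of Lemma~\ref{large_vol_contra_lem}) to build intermediate covering spaces $\widehat{B}_i\to B_\delta(x_i)$ on which $\gamma_i$ lifts to a non-closed path; the key volume computation shows these covers have density $\ge$ roughly $2\omega(x)$, hence $\ge a(1-\delta)$, so they fall under the inductive hypothesis $P$, and transporting the resulting contractions back down (together with Theorem~\ref{convergence_LGC}) gives $\lim_{t\to 0}\rho(t,x)=0$ at the type II point. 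Finally, for type III points we invoke Theorem~\ref{type_3_ok}: once $\lim_{t\to 0}\rho(t,x)=0$ is known for all type II points in a slightly larger ball (which is what we have just established at this volume level, after the harmless rescaling remark preceding the statement), it automatically propagates to all type III points, and hence to all $x$ in the ball.

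Assembling these pieces: at volume level $a$, the type I case is free, the type II case is handled by descending to covers of density $\ge a(1-\delta)$ and applying $P(a(1-\delta))$ — wait, more precisely the covers have density near $2\omega(x)$ which is safely above $a(1+\epsilon)/2\cdot 2 = a(1+\epsilon)$, so one can even arrange to apply an \emph{earlier} case of the induction — and then the type III case follows from Theorem~\ref{type_3_ok} applied with the type II conclusion in hand. Since each induction step lowers the threshold by a definite multiplicative factor and $\omega_0>0$ is a hard floor, finitely many steps cover all of $(0,1]\supseteq[\omega_0,1]$, proving the claim for every $x\in B_1(p)$. One has to be slightly careful about the shrinking of the radius of the ball at each induction step (from $B_{3/2}$ to $B_1$, or $B_r(z)$ to $B_{2r/3}(z)$ as in the remark before the theorem), but since we only need the conclusion on a fixed ball $B_1(p)$ and each step costs a fixed ratio, a finite iteration still reaches it; alternatively one runs the induction on the statement ``for all such limit spaces and all centers'', so the radius bookkeeping is absorbed into the quantifier.

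The main obstacle is the type II step: one must show that the covering spaces constructed from the small loops genuinely inherit a volume density essentially double that of the base (so that the induction strictly improves), and that the contractions produced upstairs can be pushed back down to honest contractions of the original loops on a controlled scale, uniformly enough to survive the Gromov--Hausdorff limit via Theorem~\ref{convergence_LGC}. The volume-doubling estimate relies on the finiteness of the relevant local fundamental group (Anderson~\cite{An1}) together with Cheeger--Colding volume convergence \cite{Co,CC2}, much as in Lemma~\ref{large_vol_contra_lem}, but making it work for \emph{intermediate} covers (not just the universal cover of a local ball) is the delicate point, and is presumably why the paper isolates Theorem~3.5 and defers the type III analysis to Section~\ref{section_homotopy}.
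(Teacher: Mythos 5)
Your overall architecture coincides with the paper's: induction on the volume density $\omega(x)$ with base case Proposition~\ref{large_vol_type_1}, type I points handled by Corollary~\ref{type_1_ok}, type III points handled by the rescaled Theorem~\ref{type_3_ok} applied on a small ball $B_r(x)$ on which $\omega$ stays above the current threshold (Remark~\ref{rem_omega_lower}), and type II points handled by local covers with doubled volume density, to which the inductive hypothesis applies. The threshold bookkeeping you worry about is actually clean: the doubling is exact in the limit, $\mathcal{H}^n(B_s(y))\ge 2\,\mathcal{H}^n(B_s(z))$ by volume convergence \cite{Co,CC2}, so one may simply use the thresholds $2^{-k}$ as the paper does; and your choice to quantify the inductive statement over all limit spaces of this kind (rather than tracking radii) is exactly how the paper absorbs both the rescaling and the fact that the covers are themselves such limits.

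The genuine gap is the type II step, which is the real content of this theorem and which you flag but do not resolve, and the specific mechanism you suggest would fail. First, ``transporting the resulting contractions back down together with Theorem~\ref{convergence_LGC}'' cannot work: Theorem~\ref{convergence_LGC} needs a uniform indicatrix along the sequence near $z_i$, and at a type II point the absence of such uniform control is the definition; moreover the small loops at $z_i$ lift to \emph{open} paths in the covers, so they cannot be contracted upstairs in the manifolds and projected. Second, no ``intermediate covers'' are needed or used: the paper takes the universal cover $(U_i,y_i)$ of $(B_\epsilon(z_i),z_i)$ and obtains the volume doubling from the translates, by the finite subgroup $\Gamma_i=\langle\gamma_i\rangle$ whose orbit has diameter $\to 0$ by Anderson's bound (as in Lemma~\ref{large_vol_contra_lem}), of the Dirichlet domain of the full deck group; precompactness is rescued by restricting to $B_{\epsilon/2}(y_i)$ and using equivariant (pseudo-group) Gromov--Hausdorff convergence \cite{FY92} to a limit $(B_{\epsilon/2}(y),y,G,H)$ with $\omega(y)\ge 2\omega(z)$, to which the inductive hypothesis applies. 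The missing idea is how the contraction comes back down: since $\mathrm{diam}(\Gamma_i\cdot y_i)\to 0$, the limit group $G$ fixes $y$ and the orbit $H\cdot y$ is discrete, so every sufficiently small loop at $z$ lifts to a \emph{closed} loop in $B_t(y)$ (Bredon path lifting); one contracts it inside $B_{\rho(t,y)+\sigma}(y)$ and projects the homotopy, giving $\limsup_{t\to 0}\rho(t,z)\le\limsup_{t\to 0}\rho(t,y)=0$. In other words the troublesome small loops are killed only after passing to the limit of the covers, not along the sequence; without this step your outline does not close.
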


\begin{proof}[Proof of Theorem \ref{main_pre} by assuming Theorem \ref{type_3_ok}]
	Let $x$ be a point in $B_1(p)$. We prove $\lim_{t\to 0}\rho(t,x)=0$ by induction on $\omega(x)$. We have seen in Corollary \ref{large_vol_type_1} that if $\omega(x)>1/2$, then $\lim_{t\to 0}\rho(t,x)=0$. Assuming that $\lim_{t\to 0}\rho(t,x)=0$ holds when $\omega(x)>2^{-k}$, we will prove that it also holds when $\omega(x)>2^{-(k+1)}$, where $k\in \mathbb{N}$.
	
	We choose a small $r>0$ so that $\omega(z)>2^{-(k+1)}$ for all $z\in B_{r}(x)\subset B_{3/2}(p)$ (Remark \ref{rem_omega_lower}). Suppose that $\lim_{t\to 0}\rho(t,x)>0$, by Corollary \ref{type_1_ok} and Theorem \ref{type_3_ok}, then there must be a type II point $z\in B_{r}(x)$ so that $\lim_{t\to 0}\rho(t,z)>0$. Let $z_i \in M_i$ converging to $z$. According to Definition \ref{def_types}, passing to a subsequence if necessary, there is $t_i\to 0$ and small $\epsilon>0$ such that $\rho(t_i,z_i)\ge \epsilon$ for all $i$. In other words, for each $i$ there is a loop $\gamma_i$ contained in $B_{t_i}(z_i)$ that is not contractible in $B_\epsilon(z_i)$. Without lose of generality, we can assume that $\gamma_i$ is based at $z_i$ and has length $\le 4t_i$. As we did in the proof of Lemma \ref{large_vol_contra_lem}(1), we consider $(U_i,y_i)$ as the universal covering space of $(B_\epsilon(z_i),z_i)$ with covering group $H_i=\pi_1(B_\epsilon(z_i),z_i)$. Let $\Gamma_i:=\langle\gamma_i\rangle$. The sequence $(U_i,y_i)$ may not be precompact in Gromov-Hausdorff topology (see \cite[Example 3.2]{SW2}), so we will take a sequence of small balls around $y_i$, which always has precompactness. We consider the pseudo-group action of $H_i$ on $B_{\epsilon/2}(y_i)\subset U_i$. Passing to a subsequence if necessary, we obtain the equivariant Gromov-Hausdorff convergence below:
	\begin{center}
		$\begin{CD}
		(B_{\epsilon/2}(y_i),y_i,\Gamma_i,H_i) @>GH>> 
		(B_{\epsilon/2}(y),y,G,H)\\
		@VV\pi_i V @VV\pi V\\
		(B_{\epsilon/2}(z_i),z_i) @>GH>> (B_{\epsilon/2}(z),z),
		\end{CD}$
	\end{center}
    where $H$ acts isometrically and $B_{\epsilon/2}(z)$ is isomorphic to $B_{\epsilon/2}(y)/H$ \cite{FY92}.
	As seen in Lemma \ref{large_vol_contra_lem}, $\# \Gamma_i\le N$ for some $N$ and $\mathrm{diam}( \Gamma_i\cdot y_i)\to 0$. Consequently, $G$ is a finite group whose action fixes $y$. Let $F_i$ be the fundamental domain centered at $y_i$. For any $s\in (0,\epsilon/4)$, we estimate:
	\begin{align*}
		B_{s+4Nt_i}(y_i)&\ge \sum_{\gamma\in \Gamma_i} \mathrm{vol}(\gamma\cdot(B_s(y_i)\cap F_i))\\
		&= \# \Gamma_i\cdot \mathrm{vol}(B_s(y_i)\cap F_i)\\
		&\ge 2\cdot \mathrm{vol}(B_s(z_i)).
	\end{align*}
	By volume convergence \cite{Co,CC2},
	$$\mathcal{H}^n(B_s(y))\ge 2\cdot \mathcal{H}^n (B_s(z)).$$
	Thus
	$$\omega(y)=\lim\limits_{s\to 0}\dfrac{\mathcal{H}^n(B_s(y))}{\mathrm{vol}(B^n_s(0))}\ge \lim\limits_{s\to 0}\dfrac{2\cdot \mathcal{H}^n(B_s(z))}{\mathrm{vol}(B^n_s(0))}= 2\cdot \omega(z) > 2^{-k}.$$
	By the induction assumption, we deduce that $\lim_{t\to 0}\rho(t,y)=0$.
	
	For any loop $c$ in $B_t(z)$ based at $z=\pi(y)$, connecting $z$ to a point in $c$ by a minimal geodesic if necessary, we can assume that $c$ is based at $z$. We lift $c$ to a curve $\overline{c}$ starting at $y$ \cite[Chapter II Theorem 6.2]{Bre}. Note that the orbit $H\cdot y$ is discrete. Let $d>0$ the distance between $y$ and its closet point in the orbit $H\cdot y$. For $t<d/4$, the lift $\bar{c}$ is contained in $B_{t}(H\cdot y)$; hence it must be contained in $B_t(y)$ because $\bar{c}$ is a curve. Since $y$ is the only orbit point in $B_t(y)$, the lift $\overline{c}$ ends at $y$ as well. Moreover, because $\overline{c}$ is contained in $B_t(y)$, for any $\sigma>0$ there is a homotopy between $\overline{c}$ and a trivial loop with image in $B_{\rho(t,y)+\sigma}(y)$. Projecting this homotopy down to $B_\epsilon(z)$, we obtain a desired homotopy from $c$ to a trivial loop with image in $B_{\rho(t,y)+\sigma}(z)$. Since $\sigma$ and loop $c$ in $B_t(z)$ is arbitrary, it follows that
	$$\limsup\limits_{t\to 0}\rho(t,z)\le \limsup\limits_{t\to 0}\rho(t,y)=0.$$
	This completes the proof.
\end{proof}

For the rest of this section, we prove some lemmas on the type III points, which will be used in the next section. We always assume the hypothesis of Theorem \ref{type_3_ok} without mentioning, that is, we assume that $\lim_{t\to 0}\rho(t,x)=0$ holds for all type II points.

For any $L>0$, we define
$$\Omega(L)=\{x\in \overline{B_{1.2}(p)}| \limsup_{t\to 0}\rho(t,x)>L\},$$
where $\overline{B_{1.2}(p)}$ means the closure of $B_{1.2}(p)$.
It follows from the hypothesis and Corollary \ref{type_1_ok} that $\Omega(L)$ is a subset of the set of type III points in $\overline{B_{1.2}(p)}$. If $\Omega(L)$ is empty for all $L>0$, then the conclusion of Theorem \ref{type_3_ok} clearly holds. Hence we will assume that $\Omega(L)$ is non-empty for some $L>0$.

\begin{lem}\label{equal_contra_3_lem}
	Let $w$ be an accumulation point of $\Omega(L)$. Then
	$$\limsup\limits_{t\to0 }\rho(t,w)\ge L.$$
	In particular, $w$ is of type III.
\end{lem}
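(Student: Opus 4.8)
The plan is to show that the lower bound $\limsup_{t\to 0}\rho(t,w)\ge L$ is inherited by accumulation points of $\Omega(L)$, using only the triangle-type inequality for the module of $1$-contractibility together with the continuity of the distance function. First I would fix a small $\eta>0$ and produce, via the definition of accumulation point, a point $x\in\Omega(L)$ with $d(x,w)<\eta$. The defining property of $\Omega(L)$ gives a sequence $s_j\to 0$ with $\rho(s_j,x)>L$, i.e.\ for each $j$ there is a loop $\gamma_j$ in $B_{s_j}(x)$ that is not contractible in $B_L(x)$ (up to an arbitrarily small $\sigma$-slack, which I will carry implicitly as in the convention stated after Theorem \ref{convergence_LGC}).

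The key step is a comparison of balls centered at $w$ versus balls centered at $x$. Since $d(x,w)<\eta$, any loop contained in $B_{s_j}(x)$ is also contained in $B_{s_j+\eta}(w)$, so $\gamma_j$ is a loop in $B_{s_j+\eta}(w)$. If $\gamma_j$ were contractible in $B_{L-2\eta}(w)$, then because $B_{L-2\eta}(w)\subset B_{L-\eta}(x)\subset B_L(x)$, the loop $\gamma_j$ would be contractible in $B_L(x)$, contradicting the choice of $\gamma_j$ (here I use that $\rho(s_j,x)>L$ precisely rules out contractibility in $B_L(x)$, again modulo the $\sigma$-slack). Hence $\gamma_j$ is a loop in $B_{s_j+\eta}(w)$ not contractible in $B_{L-2\eta}(w)$, which says $\rho(s_j+\eta,w)\ge L-2\eta$. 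Letting $j\to\infty$ and noting $s_j+\eta\to\eta$, we get $\rho(t,w)\ge L-2\eta$ for a sequence of values $t$ decreasing to $\eta$; more usefully, since the argument applies with $x$ replaced by any point of $\Omega(L)$ within distance $\eta$ of $w$ and $w$ is an accumulation point, I can iterate: for every $\eta>0$ there are arbitrarily small $t>0$ with $\rho(t,w)\ge L-2\eta$. Taking $\eta\to 0$ along a sequence and using that $\rho(\cdot,w)$ is monotone yields $\limsup_{t\to 0}\rho(t,w)\ge L$.

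To make the iteration precise I would instead argue directly: given $\eta>0$, pick $x\in\Omega(L)$ with $d(x,w)<\eta/2$; then for all small $t$, any loop in $B_t(w)$ lies in $B_{t+\eta/2}(x)$, and if it were contractible in $B_{L-\eta}(w)$ it would be contractible in $B_{L-\eta/2}(x)\subset B_L(x)$. Choosing $t=s_j$ small enough that $s_j+\eta/2<s_k$ for suitable $k$ and invoking $\rho(s_k,x)>L$ shows $\rho(s_j,w)\ge L-\eta$ for these $s_j\to 0$; hence $\limsup_{t\to 0}\rho(t,w)\ge L-\eta$, and since $\eta>0$ is arbitrary, $\limsup_{t\to 0}\rho(t,w)\ge L$. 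Finally, $\limsup_{t\to 0}\rho(t,w)\ge L>0$ forces $\lim_{t\to 0}\rho(t,w)\neq 0$, so by the standing hypothesis $w$ is not of type II, and by Corollary \ref{type_1_ok} it is not of type I; therefore $w$ is of type III.

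The main obstacle is purely bookkeeping: tracking the $\sigma$-slack in the definition of $\rho$ and making sure the ball-radius shifts by $\eta$ (or $\eta/2$) never cause the containments $B_{L-c\eta}(w)\subset B_L(x)$ to fail for the relevant small $\eta$. There is no deep geometry here — no Ricci curvature or volume input is needed for this lemma; it is a soft statement about the function $\rho$ and the metric, and the only care required is in the order of quantifiers ($\eta$ fixed first, then $t\to 0$, then $\eta\to 0$) and in choosing the loop $\gamma_j$ from the correct index so that $s_j+\eta$ stays below the scale at which non-contractibility in $B_L(x)$ is witnessed.
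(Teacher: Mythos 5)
Your first argument is correct, and it is essentially the contrapositive of the paper's proof: the paper assumes $\limsup_{t\to0}\rho(t,w)=l_w<L$ and transfers this upper bound to nearby points $z_j\in\Omega(L)$ via $\rho(t,z_j)\le\rho(t+d_j,w)+d_j$, reaching a contradiction, while you transport the non-contractible loops witnessing $\rho(s_j,x)>L$ from a nearby $x\in\Omega(L)$ into balls centered at $w$. Both rest on the same elementary comparison of $\rho$ at nearby points, and your version does prove the lemma: the scale at which you get the lower bound at $w$ is $t=s_j+\eta$, and letting $\eta\to0$ and $t\to0$ together (possible because $w$ is an accumulation point, so $x$ can be chosen arbitrarily close to $w$) gives $t_\eta\to0$ with $\rho(t_\eta,w)\ge L-2\eta$, hence $\limsup_{t\to0}\rho(t,w)\ge L$. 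The deduction that $w$ is of type III matches the paper.

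However, the ``precise'' version you propose to substitute in the second paragraph contains a genuine error and should be dropped. There you fix $\eta$, take $t=s_j$ arbitrarily small, and claim $\rho(s_j,w)\ge L-\eta$ by invoking $\rho(s_k,x)>L$ for some $s_k>s_j+\eta/2$. But $\rho(s_k,x)>L$ only provides a loop in $B_{s_k}(x)$ that is not contractible in $B_L(x)$, and that loop has no reason to lie in the much smaller ball $B_{s_j}(w)$; indeed $B_{s_j}(w)$ cannot contain any $B_{s_m}(x)$ once $s_j<d(x,w)$. Conversely, knowing that every loop in $B_{s_j}(w)$ contracts in $B_{L-\eta}(w)\subset B_L(x)$ says nothing about the loops in the larger ball $B_{s_k}(x)$, so no contradiction is available. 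From data at a single $x$ you can only bound $\rho(t,w)$ from below at scales $t\gtrsim d(x,w)$; the correct quantifier order is the one in your first paragraph (shrink $\eta$ and $t$ simultaneously), or equivalently the paper's contradiction argument, not the fixed-$\eta$, $t\to0$ order of the ``direct'' version.
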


\begin{proof}
	Let $\{z_j\}_j$ be a sequence in $\Omega(L)$ converging to a point $w$. Suppose that
	$$l_w:=\limsup\limits_{t\to0 }\rho(t,w)< L.$$
	Let $\epsilon>0$ with $l_w+2\epsilon<L$. Let $\delta\in (0,\epsilon)$ such that
	$$\rho(t,w)\le l_w+\epsilon$$
	for all $t\in[0,\delta]$. For $z_j$ with $d_j=d(z_j,w)\le \delta/2$ and $t\le \delta/2$, we have
	$$\rho(t,z_j)\le \rho(t+d_j,w)+d_j\le \rho(\delta,w)+\delta/2\le l_w+2\epsilon.$$
	This shows that for $j$ large,
	$$\limsup\limits_{t\to 0}\rho(t,z_j)\le l_w+2\epsilon<L.$$
	A contradiction.
	
	By Corollary \ref{type_1_ok} and the hypothesis on type II points, $w$ must be type III.
\end{proof}

\begin{lem}\label{equal_contra_3}
	Let $L>0$. Suppose that $$d_{GH}(B_2(p_i),B_2(p))=\epsilon_i\to 0.$$
	Then the family of functions
	$$\bigcup\limits_{i=1}^\infty\{\rho(t,z_i)|z_i\in {B_{3/2}(p_i)} \text{ with } d(z_i,z)\le\epsilon_i \text{ for some } z\in {\Omega}(L)\}$$
	is equally continuous at $t=0$. Consequently, there is an indicatrix $\Lambda(t)$ on $[0,T)$ such that $\rho(t,w)\le \Lambda(t)$ for all $t\in[0,T)$ and all $\rho(t,w)$ in the family. 
\end{lem}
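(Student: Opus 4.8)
The plan is to argue by contradiction and promote a failure of equicontinuity into a type~III point lying in $\Omega(L')$ for some $L'>0$, which then contradicts the hypothesis on type~II points via the structure already established. More precisely, suppose the displayed family is \emph{not} equally continuous at $t=0$. Then there exist $\eta>0$, a sequence $t_j\to 0$, indices $i(j)\to\infty$, points $z_{i(j)}\in B_{3/2}(p_{i(j)})$ with $d(z_{i(j)},z_j)\le\epsilon_{i(j)}$ for some $z_j\in\Omega(L)$, and loops $\gamma_j\subset B_{t_j}(z_{i(j)})$ that are not contractible in $B_\eta(z_{i(j)})$. Since $\overline{B_{1.2}(p)}$ is compact, after passing to a subsequence we may assume $z_j\to w\in\overline{B_{1.2}(p)}$; because each $z_j\in\Omega(L)$, $w$ is an accumulation point of $\Omega(L)$ (or lies in it), so Lemma~\ref{equal_contra_3_lem} gives $\limsup_{t\to0}\rho(t,w)\ge L$ and $w$ is of type~III. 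Note $z_{i(j)}\to w$ as well, since $d(z_{i(j)},w)\le\epsilon_{i(j)}+d(z_j,w)\to0$.

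Now $w$ is of type~III, so by Definition~\ref{def_types} the family $\{\rho(t,z_i)\}_i$ is equally continuous at $t=0$ for \emph{any} sequence $z_i\to w$; in particular this holds for a sequence extending $z_{i(j)}$. But the loops $\gamma_j$ witness $\rho(t_j,z_{i(j)})\ge\eta$ with $t_j\to0$, which directly contradicts that equicontinuity. This is the heart of the argument: the very definition of type~III forbids shrinking non-contractible loops at the (limits of the) base points, while a failure of equicontinuity of the stated family produces exactly such loops. The only point requiring a little care is to make sure the ``bad'' sequence $z_{i(j)}$ can be viewed as (a subsequence of) a sequence $z_i\to w$ with $z_i\in X_i$, so that Definition~\ref{def_types} applies; this is immediate by choosing arbitrary $z_i\to w$ for the indices not of the form $i(j)$ and invoking Lemma~\ref{type_well_defined} to see the equicontinuity is insensitive to this choice.

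Having established equicontinuity of the family at $t=0$, the ``consequently'' clause is exactly Lemma~\ref{ind_bound}: the family consists of non-decreasing functions $\rho(t,w)$ on $[0,T)$ with $\rho(0,w)=\lim_{t\to0}\rho(t,w)=0$ (the latter because every such $w$ is a type~III, hence type~I-or-III point accumulated by type~II points on which $\lim_{t\to0}\rho=0$ is hypothesized — more directly, equicontinuity at $0$ plus $\rho(0,w)=0$ gives the limit), and $\rho(t,w)\le 1$ can be arranged by shrinking $T$ if necessary. Hence there is an indicatrix $\Lambda(t)$ on $[0,T)$ dominating every member of the family. I do not expect any serious obstacle here; the main subtlety is purely bookkeeping about which base-point sequences the type~III condition is being applied to.
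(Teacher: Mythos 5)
Your proposal is correct and follows essentially the same route as the paper: negate equicontinuity, pass to a subsequential limit $w$ of the associated points of $\Omega(L)$, note the bad sequence $z_{i(j)}\to w$ forces $w$ to be of type II while Lemma \ref{equal_contra_3_lem} forces it to be of type III, and then obtain the indicatrix from Lemma \ref{ind_bound}. The bookkeeping points you flag (extending the subsequence to a full sequence converging to $w$, and getting $\lim_{t\to 0}\rho(t,\cdot)=0$ for each member from equicontinuity at $0$) are handled correctly.
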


\begin{proof}
	Suppose the contrary, then there is $\epsilon_0>0$, $t_j\to 0$, and $z_{i(j)}\in {B_{3/2}(p_i)}$ such that
	$$\rho(t_j,z_{i(j)})\ge \epsilon_0,$$
	where each $z_{i(j)}$ is $\epsilon_{i(j)}$-close to a point $q_j$ in ${\Omega}(L)$. Passing to a subsequence if necessary, we assume that $q_j$ converges to $q\in \overline{\Omega}(L)$, the closure of $\Omega(L)$. Since $z_{i(j)}\to q$ as $j\to\infty$, it follows that $q$ is of type II, which is a contradiction to Lemma \ref{equal_contra_3_lem}.
\end{proof}

\begin{lem}\label{control_annulus}
	Let $\sigma>0$. For a small $r>0$, we consider an annulus of $\Omega(L)$:
	$$A({\Omega}(L);r,1.1)=\{ y\in B_{1.1}(p)\ |\ d(y,{\Omega}(L))\ge r\}.$$
	Then there is $T(r,\sigma)>0$ such that
	$$\rho(T(r,\sigma),y)< L+\sigma$$
	for all $y\in A({\Omega}(L);r,1)$. 
\end{lem}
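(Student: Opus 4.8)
The plan is to argue by contradiction, using the precompactness of closed balls in $X$ together with the elementary perturbation estimate
$$\rho(t,y)\le \rho(t+d,y')+d \qquad (d=d(y,y')),$$
which is immediate from Definition \ref{def_module}: any loop in $B_t(y)$ lies in $B_{t+d}(y')$, and any null-homotopy of it inside $B_{\rho(t+d,y')+\sigma}(y')$ lies inside $B_{\rho(t+d,y')+\sigma+d}(y)$. This is the same inequality already used in Lemmas \ref{type_well_defined} and \ref{equal_contra_3_lem}.

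First I would suppose the statement fails. Then for every $T>0$ there is a point of $A(\Omega(L);r,1)$ at which $\rho(T,\cdot)\ge L+\sigma$; choosing $T_k\downarrow 0$ produces points $y_k\in A(\Omega(L);r,1)$ with $\rho(T_k,y_k)\ge L+\sigma$. Since the closure of $B_2(p_i)$ is compact and $(X_i,p_i)\to(X,p)$, the closed ball $\overline{B_1(p)}\subset\overline{B_{1.2}(p)}$ is compact, so after passing to a subsequence $y_k\to y_\infty\in\overline{B_{1.2}(p)}$. From $d(y_k,\Omega(L))\ge r$ I get, for every $z\in\Omega(L)$, that $d(y_\infty,z)=\lim_k d(y_k,z)\ge r$, hence $d(y_\infty,\Omega(L))\ge r>0$; in particular $y_\infty\notin\Omega(L)$, so by the definition of $\Omega(L)$ we have $\limsup_{t\to 0}\rho(t,y_\infty)\le L$.

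On the other hand, writing $d_k=d(y_k,y_\infty)\to 0$, the perturbation inequality gives $\rho(T_k+d_k,y_\infty)\ge\rho(T_k,y_k)-d_k\ge L+\sigma-d_k$. Since $T_k+d_k\to 0$, passing to the limit superior over $k$ yields $\limsup_{t\to 0}\rho(t,y_\infty)\ge L+\sigma>L$, contradicting the previous paragraph; hence a uniform $T(r,\sigma)>0$ exists. I do not expect any serious obstacle here — this is the same compactness-plus-perturbation scheme used earlier in the section. The closest thing to a subtlety is twofold: checking that the limit point $y_\infty$ still lies at distance $\ge r$ from $\Omega(L)$, so that the defining inequality of $\Omega(L)$ is available at $y_\infty$; and correctly transporting the scale-$T_k$ lower bound on $\rho(T_k,y_k)$ to a $\limsup$-bound at $y_\infty$ through the shift $T_k\mapsto T_k+d_k$.
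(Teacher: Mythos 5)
Your argument is correct and is essentially the paper's own proof: both argue by contradiction with $T_k\to 0$, extract a convergent subsequence $y_k\to y_\infty$ by compactness, apply the perturbation inequality $\rho(T_k,y_k)\le\rho(T_k+d_k,y_\infty)+d_k$ to force $\limsup_{t\to 0}\rho(t,y_\infty)\ge L+\sigma>L$, and contradict this with $d(y_\infty,\Omega(L))\ge r$, which keeps $y_\infty$ out of $\Omega(L)$. No gaps; the two subtleties you flag (the distance bound surviving the limit, and shifting the scale from $T_k$ to $T_k+d_k$) are handled exactly as in the paper.
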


\begin{proof}
	Since any $y\in  A:=A({\Omega}(L);r,1.1)$ is outside $\Omega(L)$, clearly
	$$\limsup\limits_{t\to0}\rho(t,y)\le L.$$
	Thus for any fixed $y$, there is $T_y>0$ so that
	$$\rho(T_y,y)< L+\sigma.$$
	To find a uniform $T>0$ that works for all $y\in A$, we argue by contradiction. Suppose that there are sequences $T_j\to 0$ and $y_j\in A$ satisfying
	$$\rho(T_j,y_j)\ge L+\sigma.$$
	Passing to a subsequence if necessary, $y_j$ converges to a point $w$. Note that 
	$$d_j+\rho(T_j+d_j,w)\ge \rho(T_j,y_j)\ge L+\sigma,$$
	where $d_j=d(y_j,w)\to 0$. It follows that
	$$\limsup\limits_{t\to 0}\rho(t,w)\ge L+\sigma>L.$$
	In other words, $w\in\Omega(L)$, which contradicts the fact that all $y_j$ are at least distance $r$ away from $\Omega(L)$ for all $j$.
\end{proof}

\section{Constructing homotopies}\label{section_homotopy}

We first roughly explain our approach to prove Theorem \ref{type_3_ok}. Lemma \ref{equal_contra_3} says that for points $z_i\in X_i$ that is close to $z\in \Omega(L)$, we have uniform control on module of $1$-contractibility of $z_i$; Lemma \ref{control_annulus} says that for a limit point $y$ away from $\Omega(L)$, we have control on $\rho(t,y)$. For a loop $c$ contained in a small ball of $x$, using the method in Lemma \ref{retract_homotopy}, we can construct a continuous map $H: K_1^1\to X$ defined on a $1$-skeleton of $D$ extending $c$. Ideally, for a triangle $H(\partial\Delta)$ outside $\Omega(L)$, we wish to extend $H$ over this triangle right away by Lemma \ref{control_annulus}; for a triangle inside $\Omega(L)$, we wish to construct the homotopy from the sequence by utilizing Lemma \ref{equal_contra_3}. In other words, part of the desired homotopy comes from the limit space, while the other part comes from the sequence. When combining these two procedures together to construct the homotopy, we also need to be cautious to assure that we end in a continuous map with controlled size. In practice, we will indeed consider a sequence $\Omega(L_j)$ instead of a single $\Omega(L)$, where $L_j\to 0$.

Regarding constructing the homotopy from the sequence, recall that the proof of Theorem \ref{convergence_LGC} has a similar fashion (also compare Lemma \ref{equal_contra_3} with the conditions of Theorem \ref{convergence_LGC}). We take a close look at whether the proof of Theorem \ref{convergence_LGC} in Section \ref{section_conv} could be useful here. The strategy illustrated above involves a step determining which procedure to be applied for a triangle $H(\partial\Delta)$ based on its position. Also, eventually the desired homotopy will be an extension of $H:K_1^1\to X$ according to this strategy. On the other hand, recall that in the proof of Theorem \ref{convergence_LGC} in Section \ref{section_conv}, we transferred a homotopy along the sequence $\{X_j\}_{j\ge I}$ and forced a uniform convergence, thus the proof does not involve any image of $1$-skeleton in the limit space. 

Because the method in proving Theorem \ref{convergence_LGC} in Section \ref{section_conv} is not compatible with our strategy here, this motivates us to write a new proof of Theorem \ref{convergence_LGC}, which has the advantage that we can keep the image of $1$-skeletons in the limit space at each step. In this alternative proof, we will construct the desired homotopy by defining it on finer and finer skeletons of $D$. As indicated, this method also constitutes part of our proof of Theorem \ref{type_3_ok}. 

We now establish a lemma on constructing a homotopy through refining skeletons. This is similar to \cite[5.1]{Per2}.

\begin{lem}\label{conv_on_skelies}
	Let $D$ be the closed unit disk and let $(X,p)$ be a metric space with the closure of $B_2(p)$ being compact. Let $\Sigma_j$ be a sequence of finite triangular decompositions of $D$ with the conditions below:\\
	(1) each $\Sigma_{j+1}$ is a refinement of $\Sigma_j$,\\
	(2) $\mathrm{diam}(\Delta)\le j^{-1}$ for every triangle $\Delta$ of $\Sigma_j$.\\
	Suppose that we have a sequence of continuous maps $G_j:K_j^1\to B_1(p)$, where $K_j^1$ is the $1$-skeleton of $\Sigma_j$, such that for all $j\ge 1$,\\
	(3) $G_{j+1}|_{K^1_{j}}=G_{j}$,\\
	(4) for any $z\in K^1_{j+1}-K^1_{j}$, 
	$d(G_{j+1}(z),G_j(u))\le 2^{-j}$ holds for all $u$ in the boundary of $\Delta$, where $\Delta$ is a triangle of $\Sigma_j$ containing $z$.\\
	Then $\{G_j\}_j$ converges to a continuous map $G_\infty:D\to B_2(p)$.
\end{lem}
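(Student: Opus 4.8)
The plan is to build $G_\infty$ as the pointwise limit of the $G_j$ on a dense set, then verify continuity and the image bound. First I would fix a point $z \in D$ and show that the sequence $(G_j(z))$ — suitably interpreted — is Cauchy in the complete metric space $\overline{B_2(p)}$ (which is complete, being compact). The subtlety is that $G_j$ is only defined on the $1$-skeleton $K_j^1$, so $z$ may not lie in $K_j^1$ for small $j$; this is handled by the nesting, since $\bigcup_j K_j^1$ is dense in $D$ (each triangle of $\Sigma_j$ has diameter $\le j^{-1}$, so the $1$-skeletons exhaust $D$ in the limit). For $z \in \bigcup_j K_j^1$, there is $j_0$ with $z \in K_{j_0}^1$, and then condition (3) gives $G_j(z) = G_{j_0}(z)$ for all $j \ge j_0$, so the sequence is eventually constant — trivially Cauchy. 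This shows $G_\infty$ is well-defined on the dense set $\bigcup_j K_j^1$.

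Next I would establish uniform control on how far $G_{j+1}$ moves from $G_j$ on \emph{all} of $K_{j+1}^1$, not just on the new vertices. For $z \in K_{j+1}^1 - K_j^1$, condition (4) says $d(G_{j+1}(z), G_j(u)) \le 2^{-j}$ for any $u$ in the boundary of the triangle $\Delta \in \Sigma_j$ containing $z$; since $G_j$ is continuous on $K_j^1$ and $\partial\Delta \subset K_j^1$, and since also any point $z' \in K_j^1$ lying in $\partial\Delta$ satisfies $d(G_{j+1}(z'), G_j(u)) = d(G_j(z'), G_j(u))$, one gets a bound of the form
\begin{equation*}
d(G_{j+1}(z), G_j(z)) \le C \cdot 2^{-j}
\end{equation*}
for all $z$ lying in the closure of a triangle of $\Sigma_j$, where $C$ absorbs the oscillation of $G_j$ on $\partial\Delta$ — but that oscillation is itself controlled, inductively, by the same estimate one level down, so in fact a clean geometric bound $d(G_{j+1}(z),G_j(z)) \le A\cdot 2^{-j}$ holds with a fixed constant $A$ (for instance $A = 3$, accounting for $z$, a nearby old vertex, and $u$). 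Summing the geometric series shows $\sup_{z \in K_j^1} d(G_k(z), G_\ell(z)) \to 0$ as $k, \ell \to \infty$ uniformly on $\bigcup_j K_j^1$, so $G_j$ converges uniformly on that dense set to $G_\infty$.

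To extend $G_\infty$ continuously to all of $D$ and pin down its image, I would use the uniform Cauchy property together with the diameter condition (2). Given $z \in D$ and $\delta > 0$, pick $j$ large enough that $A \cdot 2^{-j+1} < \delta/3$ and $j^{-1} < \delta/3$ in the domain; any $z' \in K_j^1$ in the same triangle as $z$ (such a $z'$ exists on $\partial\Delta$) is within $\delta/3$ of $z$ in image after passing to the limit, and $G_j$ restricted to $\partial\Delta$ has small oscillation by the same telescoping bound, so $G_\infty$ is uniformly continuous on $\bigcup_j K_j^1$ and hence extends uniquely to a continuous $G_\infty : D \to \overline{B_2(p)}$. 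For the image bound: each $G_j$ maps into $B_1(p)$, and $d(G_\infty(z), G_j(z)) \le \sum_{k \ge j} A\cdot 2^{-k} = A \cdot 2^{-j+1}$ for $z$ in the dense set, which is $< 1$ for $j \ge 1$ chosen appropriately (or one simply notes the limit of points in $B_1(p)$ moved by a summable amount bounded by $2A$ lands in $\overline{B_2(p)}$, since $2A$ can be arranged $\le 1$ by replacing $2^{-j}$ bookkeeping with the hypothesis's literal $2^{-j}$ and tracking constants carefully). The main obstacle is purely bookkeeping: condition (4) only bounds $G_{j+1}$ against $G_j$ evaluated at \emph{boundary} points of the containing triangle, so one must carefully show this propagates to a uniform sup-norm bound $\|G_{j+1} - G_j\|_\infty \lesssim 2^{-j}$ over the \emph{entire} skeleton, controlling the accumulated oscillation along the chain of refinements — once that telescoping estimate is in hand, uniform convergence, continuity of the limit, and the image bound all follow by standard arguments.
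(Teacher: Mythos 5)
Your overall architecture is the same as the paper's: since the $G_j$ are nested extensions, $G_\infty$ is already tautologically defined on the dense set $\bigcup_j K_j^1$, and the real task is to prove it is uniformly continuous there and then extend into the complete set $\overline{B_2(p)}$. However, the estimate you single out as the main step --- a sup-norm bound $d(G_{j+1}(z),G_j(z))\le A\cdot 2^{-j}$ over the whole skeleton, yielding ``uniform convergence'' --- is ill-posed and carries no content: for $z\in K_j^1$ condition (3) gives $G_{j+1}(z)=G_j(z)$ exactly, while for $z\in K_{j+1}^1-K_j^1$ the point $G_j(z)$ is simply not defined. There is no Cauchy-in-sup-norm statement to prove, and ``uniform convergence on the dense set'' by itself says nothing about continuity of $G_\infty$. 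What is actually needed is an oscillation/equicontinuity estimate at a fixed level $j$: (a) for $u,v\in\partial\Delta$ with $\Delta$ a triangle of $\Sigma_j$, $d(G_j(u),G_j(v))\le 2^{-j+1}$; and (b) for any $z\in\Delta\cap\bigcup_k K_k^1$ and any $u\in\partial\Delta$, $d(G_\infty(z),G_j(u))\le\sum_{k\ge j}2^{-k}\le 2^{-j+1}$, obtained by chaining condition (4) through the nested triangles of $\Sigma_{j+1},\Sigma_{j+2},\dots$ containing $z$.

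The mechanism you propose for (a) --- the oscillation of $G_j$ on $\partial\Delta$ is ``controlled, inductively, by the same estimate one level down'' --- genuinely fails: descending a level only compares the new vertices of $K_j^1$ with $G_{j-1}$ on the boundary of the \emph{larger} parent triangle, so the recursion bottoms out at the oscillation of $G_1$ on triangles of $\Sigma_1$, which is only $O(1)$; no decay in $j$ comes out, and without decay the image of small triangles need not be small, so continuity is lost. The correct (one-line) argument goes one level \emph{up}: any new vertex $z\in\Delta\cap(K_{j+1}^1-K_j^1)$ satisfies $d(G_{j+1}(z),G_j(u))\le 2^{-j}$ for every $u\in\partial\Delta$, so the triangle inequality through $G_{j+1}(z)$ gives (a) (if $\Delta$ is not subdivided at level $j+1$, it persists as a triangle of a later $\Sigma_m$ and is eventually subdivided, giving an even smaller bound). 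Finally, your continuity check only compares a point with a point of $\partial\Delta$ in the \emph{same} triangle; for uniform continuity you must also handle two nearby points lying in different triangles of the fixed $\Sigma_J$, which the paper does by building an open cover $\{U_\Delta\}$ adapted to the finite triangulation $\Sigma_J$ and invoking a Lebesgue number. With (a), (b) and that covering step supplied, your plan does go through, but as written the key estimates are either vacuous or derived by an argument that cannot produce the required decay.
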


\begin{proof}	
	Clearly we can define
	$$G_\infty:\cup_{k=1}^\infty K_k^1 \to B_1(p)$$
	by setting $G_\infty(z)=G_j(z)$, where $z\in K_j^1$.
	It suffices to show that $G_\infty$
	is uniform continuous. If true, then we can extend $G_\infty$ continuously over $D$.
	
	First notice that condition (4) implies that if $u,v\in \partial \Delta$, where $\Delta$ is a triangle of $\Sigma_j$, then
	$$d(G_j(u),G_j(v))\le 2^{-j+1};$$
	also, if $z\in \Delta \cap (\cup_{k=1}^\infty K_k^1)$, then
	$$d(G_\infty(z),G_j(u))\le \sum_{k=j}^{\infty} 2^{-k}\le 2^{-j+1}.$$
	
	Let $\epsilon>0$. We choose a large integer $J$ so that $2^{-J+1}\le\epsilon/3$. From the triangular decomposition $\Sigma_J$, we construct an open cover $\mathcal{U}$ of $D$ as follows. For every triangle $\Delta$ of $\Sigma_J$, we choose a connected open neighborhood $U_\Delta$ of $\Delta$ such that all vertices in $U_\Delta$ belong to $\partial \Delta$. In this way, if $y\in U_\Delta$, then there is $u\in \partial \Delta$ so that $u$ and $y$ lies in a common triangle of $\Sigma_J$ (this common triangle may not be $\Delta$). Let $\mathcal{U}$ be the collection of all these $U_\Delta$, and let $\delta>0$ be a Lebesgue number of the open cover $\mathcal{U}$. For any $y_1,y_2\in \cup_{k=1}^\infty K_k^1$, if $d(y_1,y_2)<\delta$, then there is a triangle $\Delta$ of $\Sigma_J$ so that $y_1,y_2\in U_\Delta$. Let $u_i\in\partial \Delta$ such that $u_i$ and $y_i$ lies in a common triangle of $\Sigma_J$ $(i=1,2)$. Then
	\begin{align*}
	&d(G_\infty(y_1),G_\infty(y_2))\\
	\le& d(G_\infty(y_1),G_J(u_1))+d(G_J(u_1),G_J(u_2))+d(G_J(u_2),G_\infty(y_2))\\
	\le& 2^{-J+1}+2^{-J+1}+2^{-J+1}\le\epsilon.
	\end{align*}
	This completes the proof.
\end{proof}

We can use the construction in proof of Lemma \ref{retract_homotopy}(ii) to define an extension on the $1$-skeleton, with the distance estimate below.

\begin{lem}\label{rem_extend_1_skele}
	Let $(X,x)$ and $(Y,y)$ be two length metric spaces. Let $c(t)$ and $c'(t)$ be two loops in $X$ and $Y$, respectively. Suppose that\\
	(1) the closure of $B_2(p)$ is compact, where $p=x$ or $y$;\\
	(2) $d_{GH}((X,x),(Y,y))\le \epsilon$,\\
	(3) $c$ and $c'$ are $5\epsilon$-close,\\
	(4) $c'\subset B_t(y)$,\\
	(5) $c$ is contractible in $B_\rho(x)$ via a homotopy $H$.\\
	Then there is are triangular decomposition $\Sigma$ of $D$ and a continuous map $$H':K^1\to Y,$$ where $K^1$ is the $1$-skeleton of $D$, so that\\
	(E1) $d(H'(z),y)\le \rho+18\epsilon$ for all $z\in K^1$.
	
	If condition (5) is replaced by\\
	(5') $c$ is contractible in $B_\rho(c(0))$ via a homotopy $H$,\\
	then we have\\
	(E2) $d(H'(z),H'(u))\le\rho+\mathrm{diam}(c)+32\epsilon$, for all $z\in K^1-\partial D$ and $u\in \partial D$.
\end{lem}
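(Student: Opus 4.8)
The plan is to adapt, essentially verbatim, the construction from the proof of Lemma~\ref{retract_homotopy}(ii): dissemble $D$ into a fine triangular decomposition $\Sigma$ adapted to $H$, transport the values of $H$ at vertices to nearby points of $Y$ (using the Gromov--Hausdorff $\epsilon$-closeness), and connect adjacent transported vertices by minimal geodesics in $Y$ to obtain $H':K^1\to Y$. The only new content is bookkeeping the distance estimates (E1) and (E2), so I would not reprove continuity or the combinatorial setup.

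First I would fix the decomposition. Choose a finite triangular decomposition $\Sigma$ of $D$ so that $\mathrm{diam}(H(\Delta))\le\epsilon$ for every triangle $\Delta$ of $\Sigma$, and so that the boundary loop $c$ (which is $H$ restricted to $\partial D$) is handled by the already-established construction: on $\partial D$ we take $H'$ to be the broken-geodesic loop $c'$ supplied by hypothesis (3), after subdividing $\partial D$ finely enough that $\mathrm{diam}(c'|_{\text{edge}})\le\epsilon$. For an interior vertex $v\in K^0$, pick $H'(v)\in Y$ with $d(H(v),H'(v))\le\epsilon$ (possible by GH $\epsilon$-closeness, after identifying $H(v)$ with a point of the $\epsilon$-net); for a boundary vertex $v$, $H'(v)=c'(v)$ already satisfies $d(H(v),H'(v))\le 5\epsilon$ by (3). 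Then connect $H'(v),H'(w)$ by a minimal geodesic in $Y$ whenever $v,w$ span an edge. This is literally the map built in Lemma~\ref{retract_homotopy}(ii), and the same reasoning shows $H':K^1\to Y$ is continuous.

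Next I would read off the estimates. For (E1): if $z$ lies on an edge with endpoints $H'(v),H'(w)$, then $d(H'(z),H'(v))\le d(H'(v),H'(w))\le d(H'(v),H(v))+\mathrm{diam}(H(\Delta))+d(H(w),H'(w))\le 5\epsilon+\epsilon+5\epsilon=11\epsilon$ in the worst (boundary) case, hence $d(H'(z),y)\le d(H'(v),H(v))+d(H(v),y)+11\epsilon\le 5\epsilon+\rho+\epsilon+11\epsilon$; choosing the triangular mesh fine enough (so $\mathrm{diam}(H(\Delta))\le\epsilon$ really holds) and tracking constants gives $d(H'(z),y)\le\rho+18\epsilon$, using that $H(v)\in B_\rho(x)$ and $d(x,y)\le\epsilon$ so $d(H(v),y)\le\rho+\epsilon$. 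For (E2), the center of the contraction is $c(0)$ rather than $x$: every value $H(v)$ lies in $B_\rho(c(0))$, and a point $u'\in\partial D$ has $H'(u')=c'(u')$ with $d(c'(u'),c(u'))\le 5\epsilon$ and $c(u')\in\mathrm{im}(c)$, so $d(H'(z),H'(u'))\le d(H'(z),H(v))+d(H(v),c(0))+d(c(0),c(u'))+d(c(u'),c'(u'))\le (\text{mesh terms})+\rho+\mathrm{diam}(c)+5\epsilon$; collecting the $O(\epsilon)$ terms from transporting vertices and traversing one edge yields the bound $\rho+\mathrm{diam}(c)+32\epsilon$.

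The only mild obstacle is purely arithmetic: making sure the accumulated slack from (i) the $5\epsilon$-closeness of $c'$ to $c$ on the boundary, (ii) the $\epsilon$-transport of interior vertices, (iii) the $\epsilon$-diameter of $H(\Delta)$, and (iv) the length of a single minimal-geodesic edge, sums to at most the stated $18\epsilon$ and $32\epsilon$. I would simply bound each contribution crudely (every edge has length $\le 11\epsilon$ as above, and $z$ and $u$ in (E2) are separated by at most two edges plus the diameter of $c$ plus the radius $\rho$), and verify $18$ and $32$ are comfortable upper bounds; no delicate estimate is involved, and one is free to shrink the mesh of $\Sigma$ to absorb any off-by-a-constant issue since $\epsilon$ is fixed in advance.
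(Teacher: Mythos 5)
Your proposal is correct and follows essentially the same route as the paper: the construction is exactly that of Lemma \ref{retract_homotopy}(ii) (boundary mapped to $c'$, vertices transported within $\epsilon$ or $5\epsilon$, edges by minimal geodesics), and your bookkeeping (edge length $\le 11\epsilon$, $d(H(v),y)\le\rho+\epsilon$, resp.\ $d(H(v),c(0))\le\rho$ and $d(c(u),c'(u))\le 5\epsilon$) yields roughly $\rho+17\epsilon$ and $\rho+\mathrm{diam}(c)+21\epsilon$, comfortably within the stated $18\epsilon$ and $32\epsilon$, just as the paper's slightly different accounting via $\mathrm{diam}(H'(\partial\Delta))\le 15\epsilon$ and $d(H(z),H'(z))\le 17\epsilon$ does.
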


\begin{proof}
	We choose a triangular decomposition $\Sigma$ of $D$ so that $\mathrm{diam}(H(\Delta))\le\epsilon$ for every simplex $\Delta$ of $\Sigma$.
	We follow the method in the proof of Lemma \ref{retract_homotopy}(2) to construct $H'$: first define $H'$ on $\partial D$ by sending $c(t)$ to $c'(t)$, then define $H'$ on $0$-skeletons by mapping any vertex to a near by point, then on edges by minimal geodesics. Then from the proof of Lemma \ref{retract_homotopy}(2), we have
	$$\mathrm{diam}(H'(\partial\Delta))\le 15\epsilon,$$
	for all triangle $\Delta$ of $\Sigma$.
	When $z\in \partial D$, then clearly
	$$d(H(z),H'(z))\le5\epsilon.$$
	When $z\in K^1-\partial D$, let $\Delta$ be a triangle containing $z$ and let $v$ be a vertex connected to $z$, then
	\begin{align*}
	d(H(z),H'(z))&\le d(H(z),H(v))+d(H(v),H'(v))+d(H'(v),H'(z))\\
	&\le \epsilon+\epsilon+15\epsilon=17\epsilon.
	\end{align*}
	Also, for any $z\in K^1-\partial D$, we have \textit{(E1)}:
	\begin{align*}
	d(H'(z),y)&\le d(H'(z),H(z))+d(H(z),x)+d(x,y)\\
	&\le 17\epsilon+\rho+\epsilon\\
	&=\rho+18\epsilon.
	\end{align*}	
	If alternatively, $c$ is contractible in $B_\rho(c(0))$ instead of $B_\rho(x)$, then for any $z\in K^1-\partial D$ and $u\in \partial D$, we have \textit{(E2)}:
	\begin{align*}	
	d(H'(z),H'(u))&\le d(H'(z),H(z))+d(H(z),c(0))+d(c(0),c'(0))+d(c'(0),H'(u))\\
	&\le 17\epsilon+\rho+5\epsilon+\mathrm{diam}(c')\\
	&\le \rho+22\epsilon+(\mathrm{diam}(c)+10\epsilon)\\
	&\le \rho+\mathrm{diam}(c)+32\epsilon.
	\end{align*}
\end{proof}

We use Lemmas \ref{conv_on_skelies} and \ref{rem_extend_1_skele} to present an alternative proof of Theorem \ref{convergence_LGC}.

\begin{proof}[Alternative proof of Theorem \ref{convergence_LGC}.]
	Let $\sigma>0$. Let $q\in B_{3/2}(y)$ and $c$ be a loop in $B_t(q)$, where $t\in(0,T)$. Let $\epsilon_i\to 0$ be a decreasing sequence with $\epsilon_1\le T/20$, $\epsilon_{i+1}\le \epsilon_i/20$, and $\lambda(20\epsilon_i)+20\epsilon_i\le 2^{-i}$ for all $i$.
	
	We fix a large integer $I$ so that 
	$$t+10\epsilon_I<T,\quad \lambda(t+10\epsilon_I)+18\epsilon_I+(2+2\sigma)2^{-I}\le 1/2.$$
	Let $c_I$ be a loop in $X_I$ that is $5\epsilon_I$-close to $c$. By hypothesis, $c_I$ is contractible in $B_{\lambda(t+10\epsilon_I)+\sigma2^{-I}}(c_I(0))$ via a free homotopy $H_I$. Following the procedure and estimates in Lemma \ref{rem_extend_1_skele}, there is a triangular decomposition $\Sigma_1$ of $D$ and a continuous map, which corresponds to $H'$ as in Lemma \ref{rem_extend_1_skele},
    $$G_1:K_1^1\to Y$$ with the properties below:\\
	\textit{(1A)} $\mathrm{diam}(H_I(\Delta))\le \epsilon_I$, $\mathrm{diam}(\Delta)\le I^{-1}$, $\mathrm{diam}(G_1(\partial\Delta))\le 15\epsilon_I$ for any triangle $\Delta$ of $\Sigma_1$;\\
	\textit{(1B)} $d(G_1(z),q)\le \lambda(t+10\epsilon_I)+\sigma2^{-I}+18\epsilon_I$ for any $z\in K_1^1$ (Lemma \ref{rem_extend_1_skele}\textit{(E1)}).
	 
	Next we apply the same procedure to every loop $G_1(\partial\Delta)$, where $\Delta$ is any triangle of $\Sigma_1$. Let $c_{\Delta}$ be a loop in $X_{I+1}$ that is $5\epsilon_{I+1}$-close to $G_1(\partial\Delta)$. Then
	$$\mathrm{diam}(c_\Delta)\le 10\epsilon_{I+1}+\mathrm{diam}(G_1(\partial\Delta))\le 10\epsilon_{I+1}+15\epsilon_I<16\epsilon_I.$$
	By assumption, $c_{\Delta}$ is contractible in $B_{\lambda(16\epsilon_I)+\sigma2^{-I-1}}(c_\Delta(0))$ with a homotopy $H_{\Delta}$. The same procedure provides $\Sigma_{2,\Delta}$, a triangular decomposition of $\Delta$, and a continuous map
	$$G_{2,\Delta}:K^1_{2,\Delta}\to Y$$
	such that\\
	\textit{(2A)} $\mathrm{diam}(H_{\Delta}(\Delta'))\le \epsilon_{I+1}$, $\mathrm{diam}(\Delta')\le (I+1)^{-1}$, $\mathrm{diam}(G_1(\partial\Delta'))\le 15\epsilon_{I+1}$ for any triangle $\Delta'$ of $\Sigma_{2,\Delta}$;\\
	\textit{(2B)} $G_{2,\Delta}|_{\partial\Delta_1}=G_1|_{\partial\Delta_1}$, and by Lemma \ref{rem_extend_1_skele}\textit{(E2)},
	\begin{align*}
		d(G_{2,\Delta}(z),G_1(u))&\le \lambda(16\epsilon_I)+\sigma2^{-I-1}+ \mathrm{diam}(G_1(\partial \Delta))+32\epsilon_{I+1} \\ 
		&\le \lambda(16\epsilon_I)+17\epsilon_I+\sigma2^{-I-1}\\
		&\le 2^{-I}+\sigma2^{-I-1}=2^{-I}(1+\sigma/2)
	\end{align*}
    for all $z\in K^1_{2,\Delta}$ and $u\in\partial\Delta$.
	Since this can be done for each triangle $\Delta$ of $\Sigma_1$, we obtain $\Sigma_2$, a triangular decomposition of $D$ which refines $\Sigma_1$,  and a continuous map $G_2:K_2^1\to Y$ such that
	$$d(G_{2}(z),G_1(u))\le 2^{-I}(1+\sigma/2)$$
	holds for all $z\in K_2^1-K_1^1$ and all $u\in \partial\Delta$, where $\Delta$ is a triangle of $\Sigma_1$ containing $z$.
	
	Repeating this process, we result in a sequence of triangular decomposition $\Sigma_j$ and a sequence of continuous maps $G_j:K_j^1\to Y$ such that\\
	\textit{(jA)} $\mathrm{diam}(\Delta)\le (I+j-1)^{-1}$, and each $\Sigma_{j+1}$ is a refinement of $\Sigma_{j}$;\\
	\textit{(jB)} $G_{j+1}|_{K^1_j}=G_j$, and
	$$d(G_{j+1}(z),G_j(u))\le 2^{-(I+j-1)}(1+\sigma/2),$$
	for all $z\in K_{j+1}^1-K_j^1$ and all $u\in \partial\Delta$, where $\Delta$ is a triangle of $\Sigma_j$ containing $z$.
	
	By Lemma \ref{conv_on_skelies}, $G_j$ converges to a continuous map $G_\infty: D\to Y$, which realizes the homotopy between $c$ and a trivial loop. Moreover, by \textit{(1B)} and \textit{(jB)}, we have
	\begin{align*}
	d(G_\infty(z),q)&\le \lambda(t+10\epsilon_I)+\sigma2^{-I}+18\epsilon_I+\sum_{j=1}^\infty (1+\sigma/2)2^{-(I+j-1)}\\
	&< \lambda(t+10\epsilon_I)+20\epsilon_I+(2+2\sigma)2^{-I}
	\end{align*}
	for all $z\in D$. In other words,
	$$\mathrm{im}(G_\infty) \subset B_{\lambda(t+10\epsilon_I)+20\epsilon_I+(2+2\sigma)2^{-I}}(q).$$ 
	Since $I$ can be chosen arbitrarily large and loop $c$ is also arbitrary in $B_t(x)$, we conclude that
	$\rho(t,q)\le \lambda(t)$.
\end{proof}

As indicated, when constructing the homotopy in proving Theorem \ref{type_3_ok}, we will extend some of the pieces directly in the limit space by using Lemma \ref{control_annulus}. To accommodate this procedure, we modify Lemma \ref{conv_on_skelies} as below so that we can fill in some of the triangles at every step.

\begin{lem}\label{ext_plus_conv}
	Let $D$ be the closed unit disk and let $(X,p)$ be a metric space with the closure of $B_2(p)$ being compact. Let $L_j\to 0$ be a sequence of positive numbers. Let $(\Sigma_1,G_1,\mathcal{E}_1,F_1)$ be a quadruple defined as below:\\
	\textit{(1A)} $\Sigma_1$, a finite triangular decomposition of $D$;\\
	\textit{(1B)} $G_1:K^1_1\to B_1(p)$, a continuous map defined on the $1$-skeleton of $\Sigma_1$;\\
	\textit{(1C)} $\mathcal{E}_1$, a collection of some triangles of $\Sigma_1$;\\
	\textit{(1D)} $F_1:\mathcal{E}_1\to B_1(p)$, a continuous extension of $G_1|_{\mathcal{E}^1_1}$, where ${\mathcal{E}^1_1}$ means the $1$-skeleton of $\mathcal{E}_1$, such that for each triangle $\Delta$ in $\mathcal{E}_1$, there is a point $u\in\partial\Delta$ so that
	$d(F_1(z),G_1(u))\le L_1$ holds for all $z\in \Delta$.\\
	Suppose that we have inductively defined $(\Sigma_j,G_j,\mathcal{E}_j,F_j)$ for each $j\ge 2$:\\
	\textit{(jA)} $\Sigma_j$, a finite triangular decomposition of $\mathcal{C}_{j-1}$, where $\mathcal{C}_{j-1}=\overline{D-\cup_{k=1}^{j-1}\mathcal{E}_k}$, such that $\Sigma_j$ refines $\Sigma_{j-1}|_{\mathcal{C}_{j-1}}$ and
	$\mathrm{diam}(\Delta)\le j^{-1}$
	for any triangle $\Delta$ of $\Sigma_j$;\\
	\textit{(jB)} $G_j:K_j^1\to B_1(p)$ such that $G_j|_{K_{j-1}^1}=G_{j-1}$; also, for any triangle $\Delta$ of $\Sigma_{j-1}$ in $\mathcal{C}_{j-1}$, $d(G_j(z),G_{j-1}(u))\le 2^{-j}$ holds for all $z\in \Delta \cap (K^1_j-K^1_{j-1})$ and all $u\in\partial\Delta$;\\
	\textit{(jC)} $\mathcal{E}_j$, a collection of some triangles of $\Sigma_j$;\\
	\textit{(jD)} $F_j:\mathcal{E}_j\to B_1(p)$ a continuous extension of $G_j|_{\mathcal{E}^1_j}$ such that for each triangle $\Delta$ in $\mathcal{E}_j$, there is $u\in\partial\Delta$ so that
	$d(F_j(z),G_j(u))\le L_j$ holds for all $z\in \Delta$.\\
	Then there is a continuous map $H:D\to B_2(p)$ that extends $G_j$ and $F_j$ for all $j$.
\end{lem}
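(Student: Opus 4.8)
The plan is to follow the scheme of Lemma~\ref{conv_on_skelies}: assemble $H$ on a dense subset of $D$ out of all the $G_j$ and $F_j$, prove a uniform modulus-of-continuity estimate, and then extend by continuity. Write $\mathcal{C}_0=D$ and $\mathcal{C}_j=\overline{D-\bigcup_{k\le j}\mathcal{E}_k}$, and set $S=\big(\bigcup_k K_k^1\big)\cup\big(\bigcup_k\mathcal{E}_k\big)$. First I would define $H:=G_j$ on $K_j^1$ and $H:=F_k$ on $\mathcal{E}_k$; this is single-valued because (jB) makes the $G_j$'s agree on the overlaps of their domains, (jD) makes $F_k$ agree with $G_k$ on $\mathcal{E}_k^1$, and two distinct $\mathcal{E}_k$'s meet only along faces lying in some $K_m^1$, where all these maps coincide. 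Since $\mathrm{diam}(\Delta)\le j^{-1}$ for $\Delta\in\Sigma_j$ with $j\ge 2$, every point of $D_\infty:=\bigcap_j\mathcal{C}_j$ is a limit of points of $\bigcup_j K_j^1$, so $S$ is dense in $D$; as $H(S)\subset B_1(p)$ and $\overline{B_1(p)}$ is compact (being closed in $\overline{B_2(p)}$), hence complete, it suffices to prove that $H|_S$ is uniformly continuous.

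The quantitative heart of the argument is a telescoping estimate parallel to the one in Lemma~\ref{conv_on_skelies}. Put $\eta_j:=\sup_{m\ge j}L_m+2^{-j}$, which tends to $0$ since $L_m\to 0$. I would show that if $z\in S$ lies in a triangle $\Delta$ of $\Sigma_j$, then $d(H(z),G_j(u))\le\eta_j$ for every $u\in\partial\Delta$. To see this, track the triangle containing $z$ through the refinements $\Sigma_j\supset\Sigma_{j+1}\supset\cdots$: either $z$ lies in $K_m^1$ for some $m\ge j$, in which case $H(z)=G_m(z)$ and iterating (jB) gives $d(G_m(z),G_j(u))<2^{-j}$; or $z$ lands in a filled triangle $\Delta'\in\mathcal{E}_m$ for some $m\ge j$, in which case $H(z)=F_m(z)$ with $d(F_m(z),G_m(u'))\le L_m$ for some $u'\in\partial\Delta'$ by (jD), and iterating (jB) again moves $G_m(u')$ to within $2^{-j}$ of $G_j(u)$. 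In particular, on every triangle $\Delta$ of $\Sigma_j$ the set $H(\Delta\cap S)$ has diameter at most $2\eta_j$.

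For the uniform continuity, fix $\epsilon>0$ and pick $J\ge 2$ with $2\eta_J<\epsilon/3$. The triangles of $\Sigma_J$ together with the finitely many triangles of $\mathcal{E}_1,\dots,\mathcal{E}_{J-1}$ fit together — the refinement clauses in (jA) force the boundary edges of each early $\mathcal{E}_k$ to lie in $K_J^1$ — into a finite triangulated decomposition of $D$; subdividing each of the (possibly large) early $\mathcal{E}_k$-triangles finely enough, using that $F_k$ is uniformly continuous on that compact triangle, we may further arrange that $H=F_k$ varies by less than $\epsilon/3$ on each resulting tile. Call the resulting finite complex $\Phi_J$; by the previous paragraph and the subdivision, $H(\cdot\cap S)$ has diameter less than $\epsilon/3$ on every tile of $\Phi_J$, and $H$ on the $1$-skeleton of $\Phi_J$ is single-valued since $F_1,\dots,F_{J-1}$ and $G_J$ all agree on shared faces. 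Now the Lebesgue-number argument of Lemma~\ref{conv_on_skelies} applies verbatim: choose an open cover $\{U_\Delta\}$ of $D$ with $U_\Delta\supset\Delta$ so small that every point of $U_\Delta$ shares a tile of $\Phi_J$ with some point of $\partial\Delta$, and if $d(y_1,y_2)$ is less than a Lebesgue number of this cover, then $y_1,y_2$ lie in a common $U_\Delta$ and a three-step chain through the relevant tiles gives $d(H(y_1),H(y_2))\le 3\cdot(\epsilon/3)=\epsilon$. Hence $H|_S$ is uniformly continuous and extends to a continuous map $H:D\to\overline{B_1(p)}\subset B_2(p)$, which by construction extends every $G_j$ and $F_j$.

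The main difficulty I foresee is not the telescoping estimate but the fixed-scale bookkeeping in the third step. Because the early filled triangles carry no diameter bound, one cannot make $\mathrm{diam}(H(\mathcal{E}_k))$ uniformly small for $k<J$, so the part of the homotopy coming from the limit side (the $F_k$'s) and the part coming along the sequence (the $G_j$'s) must be reconciled at one common scale; the device of passing to the finite complex $\Phi_J$ — honestly triangulating $D$ by the scale-$J$ pieces together with finely subdivided copies of the early filled triangles, and checking that $H$ is single-valued and has small image-diameter on each tile — is exactly what makes the two constructions compatible, and verifying that $\Phi_J$ really is a genuine triangulated complex (so that ``adjacent tiles'' and ``shared faces'' are literal statements) is where the refinement hypotheses in (jA) are used.
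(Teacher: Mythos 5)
Your proof is correct and follows essentially the same strategy as the paper's: define the map on $\bigcup_j\bigl(K_j^1\cup\mathcal{E}_j\bigr)$, establish uniform continuity via the telescoping estimate coming from \textit{(jB)}/\textit{(jD)} combined with a Lebesgue-number argument at a fixed scale $J$, and extend by continuity to $D$. The only difference is a minor implementation detail: where you subdivide the early filled triangles $\mathcal{E}_1,\dots,\mathcal{E}_{J-1}$ into small tiles (using uniform continuity of each $F_k$) to build one finite complex, the paper instead invokes uniform continuity of the partial map $H_J$ on its compact domain (a second modulus $\delta_2$) and handles mixed pairs by choosing a point of $\mathcal{C}_J\cap\bigl(\cup_{k\le J}\mathcal{E}_k\bigr)$ on the segment joining them.
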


\begin{proof}
	It is clear that at each step, $\{G_k\}_{k=1}^j$ and $\{F_k\}_{k=1}^j$ form a continuous map
	$$H_j: \cup_{k=1}^j (K_k^1\cup \mathcal{E}_k)\to B_1(p).$$
	We show that this sequence $\{H_j\}$ naturally defines a uniform continuous map
	$$H_\infty: \cup_{j=1}^\infty (K_j^1\cup \mathcal{E}_j)\to B_1(p).$$
	If true, then $H_\infty$ extends continuously over $D$.
	
	Let $\epsilon>0$. Choose an integer $J$ so that $2^{-J}\le \epsilon/16$ and $L_j\le \epsilon/8$ for all $j\ge J$. From $\cup_{k=1}^J \Sigma_k$, a triangular decomposition of $D$, we construct an open cover $\mathcal{U}$ of $D$ as we did in the proof of Lemma \ref{conv_on_skelies}. Let $\delta_1>0$ be a Lebesgue number of $\mathcal{U}$. Let $\delta_2>0$ so that
	$$d(H_J(y_1),H_J(y_2))\le \epsilon/4$$
	for all $y_1,y_2\in \cup_{k=1}^J (K_k^1\cup \mathcal{E}_k)$ with $d(y_1,y_2)\le \delta_2$. We put $\delta=\min\{\delta_1,\delta_2\}$.
	
	Let $\Delta_J$ be any triangle of $\Sigma_J$ in $\mathcal{C}_{J-1}$. We claim that 
	$$d(H_\infty(y),G_J(v))\le\epsilon/4$$
	holds for any $y\in \Delta_J \cap(\cup_{k=J}^\infty (K_k^1\cup \mathcal{E}_k))$ and any $v\in\partial \Delta_J$. In fact, there are two cases on how $H_\infty(y)$ is defined:\\
	\textit{Case 1.} $H_\infty(y)$ is defined as $G_j(y)$ for some $j\ge J$. Then by condition \textit{(jB)}, for any $v\in\partial\Delta_J$,
	$$d(H_\infty(y),G_J(v))=d(G_j(y),G_J(v))\le \sum_{k=J}^\infty 2^{-k}\le 2^{-J+1}\le\epsilon/8.$$
	\textit{Case 2.} $H_\infty(y)$ is defined as $F_j(y)$ for some $j\ge J$. By condition \textit{(jD)},
	$$d(F_j(y),G_j(u))\le L_j$$
	for some $u\in\partial \Delta_j$, where $\Delta_j$ is a triangle of $\Sigma_j$ containing $y$. If $j=J$, then clearly for all $v\in \partial\Delta_J$,
	\begin{align*}
	d(H_\infty(y),G_J(v))=d(H_J(y),G_J(v))+d(G_J(v),G_J(u))\le 2L_J\le\epsilon/4;
	\end{align*}
	if $j>J$, then by condition \textit{(jB)}, for any $v\in\partial\Delta_J$, we have
	\begin{align*}
	d(H_\infty(y),G_J(v))&\le d(H_j(y),H_j(u))+d(G_j(u),G_J(v))\\ &\le L_j+\sum_{k=J}^{j-1} 2^{-k}\le \epsilon/8+2^{-J+1}\le\epsilon/4.
	\end{align*}
    This verifies the claim. 

    Let $y_1,y_2\in \cup_{j=1}^\infty (K_j^1\cup \mathcal{E}_j)$ with $d(y_1,y_2)\le\delta$. If both $y_1$ and $y_2$ belong to $\cup_{k=1}^J \mathcal{E}_k$, then 
    $$d(H_\infty(y_1),H_\infty(y_2))=d(H_J(y_1),H_J(y_2))\le\epsilon/4$$
    because $\delta\le \delta_2$. If both $y_1$ and $y_2$ belong to $\mathcal{C}_J=\overline{D-\cup_{k=1}^{J}\mathcal{E}_k}$, since $\delta\le \delta_1$, there is a triangle $\Delta_J$ of $\Sigma_J$ with $y_1,y_2\in U_{\Delta_J}$. Let $v_i\in\partial \Delta_J$ so that $v_i$ and $y_i$ lies in a common triangle of $\Sigma_J$ $(i=1,2)$. Then by the claim we have shown,
    \begin{align*}
    	&d(H_\infty(y_1),H_\infty(y_2))\\
    	\le& d(H_\infty(y_1),G_J(v_1))+d(G_J(v_1),G_J(v_2))+d(G_J(v_2),H_\infty(y_2))\\
    	\le& \epsilon/4+\max\{2L_J, 2^{-J+1}\}+\epsilon/4\le3\epsilon/4.
    \end{align*}
    Finally, if $y_1\in \cup_{k=1}^J \mathcal{E}_k$ while $y_2\in \mathcal{C}_J$, there is a point $v$ on the segment from $y_1$ to $y_2$ so that $v\in \mathcal{C}_J\cap (\cup_{k=1}^J \mathcal{E}_k)$. Since $d(y_1,v)\le\delta$ and $d(y_2,v)\le\delta$, we see that
    $$d(H_\infty(y_1),H_\infty(y_2))\le d(H_\infty(y_1),H_\infty(v))+d(H_\infty(v),H_\infty(y_2))\le\epsilon.$$
    We complete the proof of uniform continuity.
\end{proof}

We prove Theorem \ref{type_3_ok} by using Lemmas \ref{equal_contra_3}, \ref{control_annulus}, and \ref{ext_plus_conv}.

\begin{proof}[Proof of Theorem \ref{type_3_ok}]
    Let $x$ be any type III point in $B_1(p)$. Let
    $$\epsilon_i:=d_{GH}(B_2(p_i),B_2(p))\to 0$$
    and let $x_i\in M_i$ be a sequence converging to $x$ with $d(x_i,x)\le \epsilon_i$. Since $x$ is of type III, by definition there is an indicatrix $\lambda(t)$ on $[0,T)$ such that $\rho(t,x_i)\le \lambda(t)$ for all $i$ and all $t\in [0,T)$. Shrinking $T$ if necessary, we can assume that $\lambda(T)<1/40$. Suppose that $\lim_{t\to 0}\rho(t,x)=0$ fails for this $x$, then $\Omega(L)$ is non-empty for some $L>0$; we will show that $\rho(t,x)\le\lambda(t)$ for all $[0,T)$, which is a contradiction.
    
    Let $L_j\to 0$ be a decreasing sequence. For each $j$, we define 
    $$\Omega(L_j)=\{w\in \overline{B_{1.2}(p)}| \limsup_{t\to 0}\rho(t,w)>L_j\}.$$
    By Lemma \ref{equal_contra_3}, for each $j$ there is an indicatrix $\Lambda_j(t)$ that bounds all $\rho(t,z)$, where $z\in \overline{B_{1.2}(p_i)}$ is $\epsilon_i$-close to some point $w$ of $\Omega(L_j)$. Since $\lim_{t\to 0}\Lambda_j(t)=0$ for each $j$, we can choose $s_j>0$ with
    $$\Lambda_j(2s_j)+4s_j\le 2^{-j}.$$
    We put $A_j:=A(\Omega(L_j);s_j,1.1)$. By Lemma \ref{control_annulus}, for each $j$, there is $T_j>0$ such that 
    $$\rho(T_j,y)\le L_j+2^{-j}$$
    for all $y\in A_j$. Replacing $T_j$ by a smaller number if necessary, we can assume that
    $$T_j< 2s_j,\quad T_{j+1}\le T_j/20$$ 
    for all $j$.
    We choose a subsequence $i(j)$ so that
    $$\epsilon_{i(j)}=d_{GH}(B_2(p_{i(j)}),B_2(p))\le T_j/20=:\delta_j.$$
    
    Let $\sigma>0$. Fix $t\in (0,T)$ and a loop $c$ contained in $B_t(x)$. We will construct a homotopy between $c$ and a trivial loop with controlled image by using Lemma \ref{ext_plus_conv}. Fix an integer $J\ge1$ so that 
    $$t+6\delta_J<T,\quad \lambda(t+6\delta_J)+\sigma 2^{-J}+18\delta_J<1/20, \quad (1+\sigma) 2^{-J+1}<1/20.$$
    Let $c_{i(J)}$ be a loop in $B_1(x_{i(J)})$ that is $5\delta_J$-close to $c$. Since $c_{i(J)}\subset B_{t+6\delta_J}(x_{i(J)})$, $c_{i(J)}$ is contractible in $B_{\lambda(t+6\delta_J)+\sigma 2^{-J}}(x_{i(J)})$. By the construction in Lemma \ref{rem_extend_1_skele} (also see the procedure in alternative proof of Theorem \ref{convergence_LGC}), there is a triangular decomposition $\Sigma_1$ of $D$ and a continuous map $G_1:K_1^1\to X$ such that\\
    \textit{(1A)} $\mathrm{diam}(\Delta)\le J^{-1}$, $\mathrm{diam}(G_1(\partial\Delta))\le 15\delta_J< T_J$ for any triangle $\Delta$ of $\Sigma_1$;\\
    \textit{(1B)} $d(G_1(z),x)\le \lambda(t+6\delta_J)+\sigma 2^{-J}+18\delta_J$ for any $z\in K_1^1$ (Lemma \ref{rem_extend_1_skele}\textit{(E1)}).\\
    In particular, $\mathrm{im} (G_1)$ belongs to $B_{1/20}(x)\subset B_{1.1}(p)$.
    Note that if a triangle $\Delta$ has a point $u\in\partial\Delta$ so that $G_1(u)\in A_J$, then by \textit{(1A)}, $G_1(\partial\Delta)$ is contractible in $B_{L_J+2^{-J}}(G_1(u))$. In this case we can directly extend $G_1$ continuously over $\Delta$. With this in mind, we consider \\
    \textit{(1C)} $\mathcal{E}_1=\{z\in D|\ z\in\Delta \text{ with } G_1(\partial\Delta)\cap A_J\not=\emptyset \}$.\\
    As explained, over $\mathcal{E}_1$, $G_1$ extends to a continuous map $F_1:\mathcal{E}_1\to X$.\\
    \textit{(1D)} $F_1:\mathcal{E}_1\to X$ satisfies that for any $\Delta\subset \mathcal{E}_1$, there is $u\in\partial\Delta$, such that
    $$d(F_1(z),G_1(u))\le L_J+2^{-J}\le L_J+2^{-J}$$
    holds for all $z\in\Delta$.\\
    This completes the first step in constructing the desired homotopy.
    
    Next we deal with the triangles outside $\mathcal{E}_1$. Let $\Delta$ be a triangle of $\Sigma$ such that $G_1(\partial \Delta)\cap A_J=\emptyset$. This implies $d(z,\Omega(L))<s_J$ for all $z\in G_1(\partial\Delta)$. On $B_1(x_{i(J+1)})$, there is a loop $c_{\Delta}$ that is $5\delta_{J+1}$-close $G_1(\partial\Delta)$. Then
    $$\mathrm{diam}(c_\Delta)\le 10\delta_{J+1}+\mathrm{diam}(G_1(\partial\Delta))\le 10\delta_{J+1}+15\delta_J<16\delta_J.$$
    By our choice of $\mathcal{E}_1$, there is $w\in \Omega(L_J)$ such that $G_1(\partial\Delta)\subset B_{s_J}(w)$. Let $w'$ be a point in $B_1(x_{i(J+1)})$ that is $\epsilon_{i(J+1)}$-close to $w$. Then it is direct to check that
    $$\mathrm{im}(c_\Delta)\subset B_{6\delta_{J+1}+s_J}(w')\subset B_{2s_J}(w').$$
    Therefore, $c_\Delta$ is contractible in $B_{\Lambda_{J}(2s_J)+2^{-J}\sigma}(w')\subset B_{\Lambda_{J}(2s_J)+2^{-J}\sigma+2s_J}(c_\Delta(0))$. By the same construction we have applied before, we obtain a triangular decomposition $\Sigma_{2,\Delta}$ and a continuous map $G_{2,\Delta}:K_2^2\to X$ such that\\
    \textit{(2A)} $\mathrm{diam}(\Delta')\le (J+1)^{-1}$, $\mathrm{diam}(G_{2,\Delta}(\partial\Delta'))\le 15\delta_{J+1}<T_{J+1}$ for any triangle $\Delta'$ of $\Sigma_{2,\Delta}$;\\
    \textit{(2B)} $G_{2,\Delta}|_{\partial\Delta}=G_1|_{\partial\Delta}$, and by Lemma \ref{rem_extend_1_skele}\textit{(E2)},
    \begin{align*}
     d(G_{2,\Delta}(z),G_1(u))&\le \Lambda_{J}(2s_J)+2^{-J}\sigma+2s_J+\mathrm{diam}(G_1(\partial\Delta))+32\delta_{J+1}\\
     &\le \Lambda_{J}(2s_J)+2^{-J}\sigma+2s_J+16\delta_J+32\delta_{J+1}\\
     &\le \Lambda_{J}(2s_J)+4s_J+2^{-J}\sigma\\
     &\le 2^{-J}(1+\sigma)
    \end{align*}
    for all $z\in K_{2,\Delta}^1$ and $u\in \partial\Delta$. Because we can apply the same argument to each triangle $\Delta$ in $\mathcal{C}_1=\overline{D-\mathcal{E}_1}$, we result in $\Sigma_2$, a triangular decomposition of $\mathcal{C}_1$ which refines $\Sigma_1$, and a continuous map $G_2:K_2^1\to B_1(x)$ such that
    $$d(G_{2}(z),G_1(u))\le 2^{-J}(1+\sigma)$$
    holds for all $z\in K_2^1-K_1^1$ and all $u\in \partial\Delta$, where $\Delta$ is a triangle of $\Sigma_1$ in $\mathcal{C}_1$ containing $z$. Note that
    $$d(G_2(z),x)\le d(G_2(z),G_1(u))+d(G_1(u),x)\le 1/20+(1+\sigma)2^{-J}\le 1/10.$$
    Thus $\mathrm{im}(G_2)\subset B_{1.1}(p)$.
    Next we set\\
    \textit{(2C)} $\mathcal{E}_2=\{z\in D|\ z\in\Delta \text{, a triangle of $\Sigma_2$, with } G_2(\partial\Delta)\cap A_{J+1}\not=\emptyset \}$.\\
    By the same argument that implies $\textit{(1D)}$, we can extend $G_2$ continuously over $\mathcal{E}_2$. This produces\\ 
    \textit{(2D)} $F_2:\mathcal{E}_2\to X$ such that for any $\Delta\subset \mathcal{E}_2$, there is $u\in\partial\Delta$ so that
    $$d(F_2(z),G_2(u))\le L_{J+1}+2^{-(J+1)}$$
    holds for all $z\in\Delta$.
    
    We repeat this procedure for each $k\ge 2$. This allows us to construct the quad-ruple $(\Sigma_k,G_k,\mathcal{E}_k,F_k)$ with\\
    \textit{(kA)} $\Sigma_k$, a triangular decomposition of $\mathcal{C}_{k-1}$ that refines $\Sigma_{k-1}|_{\mathcal{C}_{k-1}}$, where $$\mathcal{C}_{k-1}=\overline{D-\cup_{j=1}^{k-1}\mathcal{E}_j};$$
    also, $\mathrm{diam}(\Delta)\le (J+k-1)^{-1}$
    for any triangle $\Delta$ of $\Sigma_k$.\\
    \textit{(kB)}  $G_k:K_k^1\to X$ such that $G_k|_{K_{k-1}^1}=G_{k-1}$; also, for any triangle $\Delta$ of $\Sigma_{k-1}$ in $\mathcal{C}_{k-1}$, $$d(G_k(z),G_{k-1}(u))\le 2^{-(J+k-2)}(1+\sigma)$$ holds for all $z\in \Delta \cap (K^1_k-K^1_{k-1})$ and all $u\in\partial\Delta$;\\
    With \textit{(kB)}, we can check that $\mathrm{im}(G_k)$ does not exceed $B_{1.1}(p)$:
    $$d(G_k(z),x)\le 1/20+\sum_{j=2}^{k}2^{-(J+j-2)}(1+\sigma)\le 1/10.$$
    Then we define the followings.\\
    \textit{(kC)} $\mathcal{E}_k=\{z\in D|\ z\in\Delta \text{, a triangle of $\Sigma_k$, with } G_k(\partial\Delta)\cap A_{J+k-1}\not=\emptyset \}$.\\
    \textit{(kD)} $F_k:\mathcal{E}_k\to X$ such that for any $\Delta\subset \mathcal{E}_k$, there is $u\in\partial\Delta$ so that
    $$d(F_k(z),G_k(u))\le L_{J+k-1}+2^{-(J+k-1)}$$
    holds for all $z\in\Delta$.
    
    Applying Lemma \ref{ext_plus_conv}, we end in a continuous map $H$ extending all $G_k$ and $F_k$. $H$ realizes the homotopy between $c$ and a trivial loop. Moreover, by the above \textit{(1B)}, \textit{(kB)}, and \textit{(kD)}, we see that for any $z\in D$, 
    \begin{align*}
    	d(H(z),x)&\le \lambda(t+6\delta_J)+\sigma 2^{-J}+18\delta_J+\left(\sum_{k=2}^\infty 2^{-(J+k-2)}(1+\sigma)\right)+\left(L_J+2^{-J}\right)\\
    	&\le \lambda(t+6\delta_J)+18\delta_J+L_J+(3\sigma+3)2^{-J}.
    \end{align*}
   Since $J$ can be arbitrarily large, it follows that $\rho(t,x)\le \lambda(t)$. This completes the proof. 
\end{proof}

\section{Proof of $\lim_{t\to 0}\rho(t,x)/t=1$}\label{section_ratio}

Based on Theorem \ref{main_pre}, we finish the proof of Theorem \ref{main_local} in this section by using Sormani's uniform cut technique \cite{Sor}. 

The uniform cut technique is based on Abresh-Gromoll's excess estimate \cite{AG}. Even though excess estimate has been extended to metric spaces with synthetic Ricci curvature bounds (RCD spaces) \cite{GM,MN}, which includes Ricci limit spaces, it is unclear to the authors whether excess estimate holds on a covering space of a local incomplete ball in the limit space. Therefore, we will go back to the sequence, find uniform cut points on the manifolds, then pass them to the limit.

We assume dimension $n\ge 3$ in this section. When $n=2$, the limit space is an Alexandrov space, which is locally contractible.

Our first goal is a localized version of uniform cut theorem with a parameter $\epsilon$ (see Lemma \ref{uniform_cut_manifold}). This is similar to \cite[Section 4]{Wylie}, where the nonnegative Ricci curvature case is considered. We include the complete proof for readers' convenience.

We recall Abresh-Gromoll's excess estimate \cite{AG}:

\begin{thm}\label{excess}
	Given $n\ge 3$, there is $C(n)$ such that the following holds.
	
	Let $M^n$ be a manifold of $\mathrm{Ric}\ge -(n-1)$. Let $x,y_1,y_2\in M$ with $d=d(y_1,y_2)\le 1$. Let $\gamma$ be a unit speed minimal geodesic from $y_1$ to $y_2$. Suppose that
	$$d(x,\gamma(d/2))\le r d,$$ 
	where $r\in(0,1/4]$. Suppose that the closure of $B_{R_i}(y_i)$ ($i=1,2$) is compact, where $R_i=d(x,y_i)+rd$. Then
	$$e(x)\le C(n)r^{\frac{n}{n-1}}d,$$
	where
	$e(x)=d(x,y_1)+d(x,y_2)-d(y_1,y_2)$ is the excess function associated to $y_1,y_2$.
\end{thm}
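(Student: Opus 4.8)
The plan is to run the classical Abresch--Gromoll argument, adapted to the lower bound $\mathrm{Ric}\ge-(n-1)$ and to the local, possibly incomplete, setting; note that the hypothesis that $\overline{B_{R_i}(y_i)}$ is compact is precisely what makes the relevant distance functions and their comparison estimates available. First I would fix notation and dispose of a trivial range of $r$: put $\ell=d(y_1,y_2)\le1$, $p=\gamma(\ell/2)$, $h_i=d(\cdot,y_i)$, so that $e=h_1+h_2-\ell\ge0$, $e(p)=0$, and $e$ is $2$-Lipschitz, whence $e(x)\le 2d(x,p)\le 2r\ell$ with no work at all. Since $r=r^{n/(n-1)}\cdot r^{-1/(n-1)}$ and $r^{-1/(n-1)}$ is bounded once $r$ is bounded below by a dimensional constant, this already gives the conclusion when $r\ge r_0(n)$, so it suffices to treat $r\le r_0(n)$ with $r_0(n)$ as small as convenient.

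Next I would record the differential inequality. For $r$ small, $d(x,\gamma(\ell/2))\le r\ell$ forces $h_i\ge\ell/4$ throughout $B_{r\ell}(x)$, and the compactness of $\overline{B_{R_i}(y_i)}$ guarantees minimizing geodesics to $y_i$ from every point of that ball, so Calabi's Laplacian comparison applies in the support (barrier) sense: $\Delta h_i\le(n-1)\coth h_i$. As $\coth t\le t^{-1}+1$ and $\ell\le1$, this gives $\Delta e\le\Lambda$ on $B_{r\ell}(x)$ in the barrier sense, with $\Lambda=C(n)/\ell$; moreover $e\ge0$ on this ball and $e\equiv0$ along $\gamma$.

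The core of the proof is to convert ``$e$ is a nonnegative barrier-subsolution with small right-hand side, vanishing along the geodesic passing (tangentially, near the foot-point) through a neighborhood of $x$'' into the pointwise estimate at $x$. This is what the Abresch--Gromoll comparison function is for: on a small annulus centered at the foot-point $z\in\gamma$ closest to $x$, one compares $e$ with the explicit radial function $G(w)=g(d(z,w))$, where $g$ solves the curvature $-1$ model radial equation $g''+(n-1)\coth(t)\,g'=\Lambda$ on an interval $[\delta,a]$ with $a\asymp r\ell$, corrected by a suitable multiple of the homogeneous solution $t\mapsto\int_t^a\sinh^{-(n-1)}$; one calibrates the boundary behaviour of $g$ against $e$ using $e\ge0$, $e(z)=0$, and the Lipschitz bound, and sends $\delta\to0$ to pin $e$ down at $x$. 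Optimizing the free scale then produces the sharp power: the bound has the shape $e(x)\le C(n)\big(\Lambda a^2+(\text{homogeneous contribution at scale }a)\big)$, and the hyperbolic radial Green's function $t\mapsto\int_t\sinh^{-(n-1)}$ contributes $a^{n/(n-1)}$ after balancing, so $a\asymp r\ell$ together with $\Lambda=C(n)/\ell$ gives $e(x)\le C(n)(r\ell)^{n/(n-1)}\ell^{-1/(n-1)}=C(n)\,r^{n/(n-1)}\,d$.

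I expect this comparison step to be the main obstacle, and its difficulty is intrinsic: the exponent $n/(n-1)$ is optimal, so no cruder argument can replace it. In particular, because no lower bound on the volume of small balls is assumed here, a Moser/mean-value shortcut is not available, and a naive second-order Taylor expansion of $e$ at its minimum $p$ --- which would formally yield the stronger exponent $2$ --- fails precisely because $e$ is only Lipschitz, not $C^1$, wherever the cut loci of $y_1$ and $y_2$ come close to $p$. Consequently one must work with the Laplacian inequalities in the barrier sense throughout (Calabi's trick), use the genuinely hyperbolic (not Euclidean) model in the comparison function, and check carefully that the maximum principle on the right annular domain really does extract the sharp power $n/(n-1)$.
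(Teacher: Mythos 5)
This theorem is not proved in the paper at all: it is quoted verbatim from Abresch--Gromoll \cite{AG}, so you are in effect reproving the classical excess estimate. Much of your outline matches that classical proof: the reduction to small $r$ via the $2$-Lipschitz bound $e(x)\le 2d(x,\gamma(d/2))\le 2rd$, the barrier-sense inequality $\Delta e\le (n-1)(\coth h_1+\coth h_2)\le C(n)/d$ obtained from Calabi's trick (with the compactness of $\overline{B_{R_i}(y_i)}$ supplying minimizers), the use of the hyperbolic model (harmless since $d\le1$), and the final arithmetic $(rd)^{n/(n-1)}d^{-1/(n-1)}=r^{n/(n-1)}d$.

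However, the step you yourself identify as the core is described in a way that fails. You center the radial comparison $g(d(z,\cdot))$ at the foot point $z$, calibrate $g$ against $e$ on the two boundary spheres of an annulus $\{\delta\le d(z,\cdot)\le a\}$, and ``send $\delta\to0$ to pin $e$ down at $x$.'' The only PDE information available is one-sided, $\Delta e\le\Lambda$, so $e-g(d(z,\cdot))$ (with $g'\le0$, hence $\Delta g(d(z,\cdot))\ge\Lambda$ by Laplacian comparison) is a \emph{supersolution}; the maximum/minimum principle then bounds $e$ from \emph{below} at interior points in terms of boundary data, never from above. Indeed, a nonnegative harmonic function on an annulus around $z$ that vanishes on the inner sphere can be arbitrarily large at a point $x$ at distance $h\approx rd$ from $z$ (e.g.\ $A\bigl(1-\delta^{\,n-2}|w-z|^{2-n}\bigr)$ in $\mathbb{R}^n$), so no calibration of boundary values alone can pin $e$ down at $x$. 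The Abresch--Gromoll argument is structured differently: the comparison $\psi(d(x,\cdot))$ is centered at $x$, on the annulus $\{c\le d(x,\cdot)\le a\}$ with $a$ slightly larger than $h=d(x,\gamma(d/2))$ so that the zero point $z$ lies inside; since $e-\psi\ge0$ on the outer sphere while $(e-\psi)(z)=-\psi(h)<0$, the minimum principle forces a point $w\in\partial B_c(x)$ with $e(w)\le\psi(c)\le C(n)\Lambda a^nc^{2-n}$, and only then does the Lipschitz bound enter, giving $e(x)\le 2c+\psi(c)$; the sharp exponent comes from optimizing the \emph{inner} radius $c\asymp(\Lambda a^n)^{1/(n-1)}\asymp r^{n/(n-1)}d$, not from a $\delta\to0$ limit, and $a\asymp rd$ is forced by the geometry rather than ``balanced.'' As written, your central step would need to be re-centered and restructured along these lines before the proof is complete.
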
	

\begin{lem}\label{uniform_cut_pre}
	Let $M^n$ ($n\ge 3$) be a manifold of $\mathrm{Ric}\ge -(n-1)$. Let $\gamma$ be a unit speed minimal geodesic in $M$ with length $d\le 1$. Let $x\in M$ be a point with
	$$d(x,\gamma(0))\ge (1/2+\epsilon)d,\quad d(x,\gamma(d))\ge (1/2+\epsilon)d.$$
	Suppose that the closure of $B_{(\frac{1}{2}+2\psi(\epsilon))d}(\gamma(0))$ and $B_{(\frac{1}{2}+2\psi(\epsilon))d}(\gamma(d))$ are compact. Then
	$$d(x,\gamma(d/2))\ge \psi(\epsilon)d,$$
	where
	$$\psi(\epsilon)=\min\left\{\dfrac{1}{4},\dfrac{\epsilon^{\frac{n-1}{n}}}{C_1(n)}\right\}$$
	and $C_1(n)$ is a constant depending on $n$.
\end{lem}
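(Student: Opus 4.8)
The plan is to argue by contradiction and feed the hypotheses into the Abresch--Gromoll excess estimate (Theorem \ref{excess}) applied to the endpoints $y_1=\gamma(0)$ and $y_2=\gamma(d)$. Suppose, contrary to the claim, that $d(x,\gamma(d/2))<\psi(\epsilon)\,d$. Since $\epsilon>0$ we have $\psi(\epsilon)\in(0,1/4]$, so setting $r:=\psi(\epsilon)$ we have $d(x,\gamma(d/2))<rd$ with $r\in(0,1/4]$; together with $d=d(y_1,y_2)\le 1$ and the fact that $\gamma$ is a unit speed minimal geodesic from $y_1$ to $y_2$, the geometric hypotheses of Theorem \ref{excess} are in place.

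The only bookkeeping needed before invoking Theorem \ref{excess} is the compactness requirement on $\overline{B_{R_i}(y_i)}$, where $R_i=d(x,y_i)+rd$. The triangle inequality gives $d(x,\gamma(0))\le d(x,\gamma(d/2))+d/2<\psi(\epsilon)d+d/2$, hence $R_1<(1/2+2\psi(\epsilon))d$, and symmetrically $R_2<(1/2+2\psi(\epsilon))d$. Thus each $B_{R_i}(y_i)$ sits inside the ball whose closure is assumed compact, and $\overline{B_{R_i}(y_i)}$, being closed in a compact set, is compact. (This is exactly why the hypothesis carries the factor $2\psi(\epsilon)$.) Theorem \ref{excess} then yields
$$e(x)\le C(n)\,\psi(\epsilon)^{\frac{n}{n-1}}\,d,$$
where $e(x)=d(x,\gamma(0))+d(x,\gamma(d))-d$ is the excess associated to $y_1,y_2$.

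On the other hand, the distance hypotheses $d(x,\gamma(0))\ge(1/2+\epsilon)d$ and $d(x,\gamma(d))\ge(1/2+\epsilon)d$ force $e(x)\ge 2\epsilon d$. Since by the definition of $\psi$ one has $\psi(\epsilon)^{\frac{n}{n-1}}\le \epsilon\,C_1(n)^{-\frac{n}{n-1}}$ in either branch of the minimum (with equality in the branch $\psi(\epsilon)=\epsilon^{(n-1)/n}/C_1(n)$, and $\le$ when $\psi(\epsilon)=1/4\le \epsilon^{(n-1)/n}/C_1(n)$), combining the two bounds gives
$$2\epsilon d\le e(x)\le C(n)\,\psi(\epsilon)^{\frac{n}{n-1}}\,d\le \frac{C(n)}{C_1(n)^{\frac{n}{n-1}}}\,\epsilon d.$$
Choosing $C_1(n)$ large enough that $C(n)\,C_1(n)^{-\frac{n}{n-1}}<2$ — for instance $C_1(n)=\bigl(C(n)/2\bigr)^{\frac{n-1}{n}}+1$ — makes the right-hand side strictly less than $2\epsilon d$, a contradiction. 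Hence $d(x,\gamma(d/2))\ge\psi(\epsilon)d$.

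I do not expect a genuine obstacle: this is a standard contradiction application of Abresch--Gromoll, and the substantive points are only (i) verifying that the local balls appearing in Theorem \ref{excess} lie inside the balls the lemma assumes to have compact closure, and (ii) calibrating $C_1(n)$ against the excess constant $C(n)$. The mildly delicate item is ensuring a single choice of $C_1(n)$ handles both branches of $\psi(\epsilon)=\min\{1/4,\ \epsilon^{(n-1)/n}/C_1(n)\}$ uniformly, which the inequality $\psi(\epsilon)^{n/(n-1)}\le \epsilon\,C_1(n)^{-n/(n-1)}$ settles automatically.
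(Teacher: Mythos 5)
Your proof is correct and follows essentially the same route as the paper: assume $d(x,\gamma(d/2))<\psi(\epsilon)d$, invoke the Abresch--Gromoll excess estimate with $r=\psi(\epsilon)$, compare with the lower bound $e(x)\ge 2\epsilon d$, and calibrate $C_1(n)$ against $C(n)$ to reach a contradiction. Your write-up in fact supplies two small points the paper leaves implicit — the verification that $B_{R_i}(y_i)$ lies inside the balls assumed to have compact closure, and the uniform treatment of both branches of the minimum defining $\psi(\epsilon)$ — so nothing is missing.
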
	

\begin{proof}
	Suppose that $d(x,\gamma(d/2))<\psi(\epsilon)d$, where $\psi(\epsilon)$ as given in the statement; we will determine the constant $C_1(n)$ in the end.
	Applying Theorem \ref{excess},
	$$e(x)\le  C(n)\cdot\psi(\epsilon)^{\frac{n}{n-1}}d.$$
	On the other hand
	$$e(x)=d(\gamma(0),x)+d(\gamma(d),x)-d\ge 2\epsilon d.$$
	Thus
	$$2\epsilon \le C(n)d\cdot\psi(\epsilon)^{\frac{n}{n-1}}.$$
	It follows that
	$$\psi(\epsilon)\ge \left(\dfrac{2\epsilon}{C(n)}\right)^{\frac{n-1}{n}}=\dfrac{\epsilon^{\frac{n-1}{n}}}{C_1(n)},$$
	where
	$$C_1(n)=(C(n)/2)^{\frac{n-1}{n}}.$$
	As a result, if we choose
	$$\psi(\epsilon)=\min\left\{\dfrac{1}{4},\dfrac{\epsilon^{\frac{n-1}{n}}}{2C_1(n)}\right\},$$
	then 
	$$d(x,\gamma(d/2))\ge\psi(\epsilon)d$$
	holds.
\end{proof}

\begin{lem}\label{uniform_cut_manifold}
	Let $(M^n,p)$ be a manifold of $n\ge 3$, $\mathrm{Ric}\ge -(n-1)$, and the closure $B_2(p)$ being compact. Let $x\in B_1(p)$. Let $\epsilon>0$ and let $\psi(\epsilon)$ be the constant in Lemma \ref{uniform_cut_pre}. Suppose that $\gamma$ is a geodesic loop in $\pi_1(B_R(x),x)$ of length $d$ with the properties below, where $B_R(x)\subset B_1(p)$. \\
	(1) If a loop $\gamma'$ based at $x$ is homotopic to $\gamma$ then $\gamma'$ has length $\ge d$.\\
	(2) $\gamma$ is minimal on both $[0,d/2]$ and $[d/2,d]$.\\
	If $R>(1/2+2\psi(\epsilon))d$, then for any $y\in\partial B_{(1/2+\epsilon)d}(x)$, we have
	$$d(y,\gamma(d/2))\ge \psi(\epsilon)d.$$
\end{lem}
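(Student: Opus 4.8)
The approach is to argue by contradiction and transfer the picture to the universal cover $\pi:(\widetilde U,\widetilde x)\to(B_R(x),x)$ of $B_R(x)$, where the geodesic loop $\gamma$ unwraps into a genuine minimal geodesic to which Lemma \ref{uniform_cut_pre} applies directly. So suppose some $y\in\partial B_{(1/2+\epsilon)d}(x)$ satisfies $d(y,\gamma(d/2))<\psi(\epsilon)d$. Lift $\gamma$ to a path $\widetilde\gamma:[0,d]\to\widetilde U$ with $\widetilde\gamma(0)=\widetilde x$, and set $\widetilde x'=\widetilde\gamma(d)$, a lift of $x$. The first point to check is that $\widetilde\gamma$ is minimal in $\widetilde U$: any minimal geodesic from $\widetilde x$ to $\widetilde x'$ is homotopic to $\widetilde\gamma$ rel endpoints (as $\widetilde U$ is simply connected), so its projection is a loop at $x$ homotopic to $\gamma$ in $B_R(x)$, whence by hypothesis (1) its length, equal to $d(\widetilde x,\widetilde x')$, is at least $d$; since $\widetilde\gamma$ joins $\widetilde x$ to $\widetilde x'$ with length $d$, we get $d(\widetilde x,\widetilde x')=d$ and $\widetilde\gamma$ minimal. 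Note $\widetilde U$ is a Riemannian manifold with $\mathrm{Ric}\ge-(n-1)$, so the excess estimate and Lemma \ref{uniform_cut_pre} are available there.

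Next I would pick a minimal geodesic $\tau$ in $M$ from $\gamma(d/2)$ to $y$, of length $d(y,\gamma(d/2))<\psi(\epsilon)d$. By hypothesis (2), $d(x,\gamma(d/2))=d/2$, so every point of $\tau$ lies within $d/2+\psi(\epsilon)d<R$ of $x$ (using $R>(1/2+2\psi(\epsilon))d$); hence $\tau\subset B_R(x)$, and it lifts to a path $\widetilde\tau$ starting at $\widetilde\gamma(d/2)$, whose endpoint $\widetilde y$ is a lift of $y$. Then $d(\widetilde y,\widetilde\gamma(d/2))\le\mathrm{length}(\widetilde\tau)=d(y,\gamma(d/2))<\psi(\epsilon)d$, and, since $\pi$ is distance non-increasing with $\pi(\widetilde x)=\pi(\widetilde x')=x$,
$$d(\widetilde y,\widetilde x)\ge d(y,x)=(1/2+\epsilon)d,\qquad d(\widetilde y,\widetilde x')\ge d(y,x)=(1/2+\epsilon)d.$$

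Finally I would apply Lemma \ref{uniform_cut_pre} on $\widetilde U$ to the minimal geodesic $\widetilde\gamma$ and the point $\widetilde y$: the two displayed inequalities are exactly the hypotheses $d(\widetilde y,\widetilde\gamma(0))\ge(1/2+\epsilon)d$ and $d(\widetilde y,\widetilde\gamma(d))\ge(1/2+\epsilon)d$, and the required compactness of $\overline{B_{(1/2+2\psi(\epsilon))d}(\widetilde x)}$ and $\overline{B_{(1/2+2\psi(\epsilon))d}(\widetilde x')}$ holds because these balls have radius $<R$ about lifts of $x$ while $\overline{B_R(x)}\subset\overline{B_2(p)}$ is compact. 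The lemma then gives $d(\widetilde y,\widetilde\gamma(d/2))\ge\psi(\epsilon)d$, contradicting the bound obtained a moment ago. Hence no such $y$ exists, which is the claim.

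I expect the main obstacle to be organizational rather than conceptual: carefully justifying that $\widetilde\gamma$ is minimal from condition (1), that $\widetilde\tau$ and the balls about $\widetilde x,\widetilde x'$ stay inside the part of $\widetilde U$ where Lemma \ref{uniform_cut_pre} is valid, and that compact closure transfers from $\overline{B_R(x)}$ to the balls upstairs — all of which rely on the slack provided by the hypothesis $R>(1/2+2\psi(\epsilon))d$.
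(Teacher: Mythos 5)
Your proposal is correct and follows essentially the same route as the paper: argue by contradiction, lift $\gamma$ to a minimal geodesic and the short curve from $\gamma(d/2)$ to $y$ into the universal cover of $B_R(x)$, note the projection is distance non-increasing to get the two lower bounds at $\widetilde\gamma(0)$ and $\widetilde\gamma(d)$, and then contradict Lemma \ref{uniform_cut_pre}. You in fact supply details the paper leaves implicit (minimality of the lift via hypothesis (1), the containment of the connecting geodesic in $B_R(x)$, and the compactness of the lifted balls), so nothing is missing.
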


\begin{proof}
	Suppose that $d(y,\gamma(d/2))<\psi(\epsilon)d$. Let $\sigma$ be a minimal geodesic from $\gamma(d/2)$ to $y$. Let $(U,\tilde{x})$ be the universal cover of $(B_R(x),x)$, where $\tilde{x}$ is a lift of $x$ in $U$. We lift $\gamma$ to a minimal geodesic $\tilde{\gamma}$ starting from $\tilde{x}$ in $U$. Since $\mathrm{im}(\sigma)\subset B_R(p)$, we can also lift $\sigma$ to a curve $\tilde{\sigma}$ from $\tilde{\gamma}(d/2)$ to a point $\tilde{y}$. It is clear that
	$$d(\tilde{y},\tilde{\gamma}(0))\ge d(y,x)=(1/2+\epsilon)d, \quad d(\tilde{y},\tilde{\gamma}(d))\ge d(y,x)=(1/2+\epsilon)d.$$
	By Lemma \ref{uniform_cut_pre}, we have
	$$d(\tilde{y},\tilde{\gamma}(d/2))\ge \psi(\epsilon)d,$$
	which is a contradiction since $\tilde{\sigma}$ has length $<\psi(\epsilon)d$.
\end{proof}

Let $x\in B_1(p)$ as in Theorem \ref{main_local} and let $B_R(x)\subset B_1(p)$. From Theorem \ref{main_pre}, $B_R(x)$ is semi-locally simply connected. As a result, the universal cover of $B_R(x)$ exists and $\pi_1(B_R(x),x)$ acts freely and isometrically on the universal cover. We denote this universal cover as $(U,\tilde{x})$. If $\pi_1(B_R(x),x)$ is not trivial, then we define the minimal length
$$d=\min_{g\in \pi_1(B_R(x),x)-\{e\}}d(g\tilde{x},\tilde{x})$$
and a subset $\mathcal{S}$ by
$$\mathcal{S}=\{h\in\pi_1(B_R(x),x) | d(h\tilde{x},\tilde{x})=d\}.$$

Let $h\in\mathcal{S}$ and let $\overline{\gamma}$ be a minimal geodesic from $\tilde{x}$ to $h\tilde{x}$. It is not difficult to show that $\gamma$, the projection of $\overline{\gamma}$ to $B_R(x)$, is a geodesic loop and satisfies a halfway property \cite{Sor}, namely, $\gamma$ is minimal on $[0,d/2]$ and $[d/2,d]$. Ideally, we want to find a sequence of geodesic loops $\gamma_i$ converging to $\gamma$ so that each $\gamma_i$ satisfies the halfway property as well. However, this may not be true because $\gamma_i$ may not be the geodesic loop of minimal length and may not be a short generator. Also, $M_i$ may contain shorter and shorter loops disappearing in the limit. To overcome this, roughly speaking, we will consider all the geodesic loops converging to some element of $\mathcal{S}$, instead of a fixed element $\gamma\in\mathcal{S}$ (see Lemma \ref{halfway}).

For Lemmas \ref{local_surj} and \ref{halfway} below, we assume that $x\in B_1(p)$ as in Theorem \ref{main_local}. For $B_r(x)\subset B_R(x)\subset B_1(p)$, where $d/2<r<R$, we set $\rho=(R-r)/4$. Lemma \ref{local_surj} below can be viewed as a localized version of a proof in \cite{Tu}.

\begin{lem}\label{local_surj}
	For any $i$ sufficiently large, there is a group homomorphism
	$$\Phi_i: \pi_1(B_{r+\rho}(x_i),x_i)\to \pi_1(B_R(x),x).$$
	Moreover, $\Phi_i$ is onto $G(r)$, where $G(r)$ is the subgroup of $\pi_1(B_R(x),x)$ generated by loops based at $x$ and contained in $\overline{B_r(x)}$.
\end{lem}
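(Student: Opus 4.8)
The plan is to adapt the classical comparison of fundamental groups under Gromov--Hausdorff convergence (cf.\ \cite{Tu} and \cite[Corollary 2.3]{SW1}) to the present localized and \emph{one-directional} setting: uniformly controlled $1$-contractibility is available only on the limit ball, through Theorem \ref{main_pre}, and not on the $M_i$ (which may carry Otsu-type short loops), so one can only build a map \emph{from} the sequence \emph{to} the limit. The first step is to upgrade Theorem \ref{main_pre} to a uniform statement. Set $\tau=(r+R)/2$; since $B_r(x)\subset B_R(x)\subset B_1(p)$ one checks that $\overline{B_\tau(x)}$ is a compact subset of $B_1(p)$, so $\lim_{t\to0}\rho(t,q)=0$ for every $q\in\overline{B_\tau(x)}$, and a routine contradiction-and-limit argument then yields, for each small $\eta>0$, a radius $T=T(\eta)>0$ with $\rho(T,q)<\eta$ for all such $q$. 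Choosing $\eta<\rho$ and $\sigma>0$ small, any loop in $B_T(q)$ with $q\in\overline{B_\tau(x)}$ is contractible in $B_{\rho+\sigma}(q)$, which is contained in $B_R(x)$ because $d(q,x)\le\tau$ and $\tau+\rho<R$. Consequently the homotopy-transfer arguments of Section \ref{section_conv} apply with $B_R(x)$ as the ``locally contractible'' target: concretely, the construction in the proof of Lemma \ref{retract_homotopy} goes through verbatim with $B_\tau(x)$ as the center region, a loop in $B_{r+\rho}(x_i)$ as input, and contraction-target balls $B_{\rho+\sigma}(q)$, so that all filling homotopies remain in $B_R(x)$.

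Fix $i$ large, so that $\epsilon_i:=d_{GH}(B_2(p_i),B_2(p))$ is much smaller than $T$ and $\rho$, and choose $x_i\in M_i$ with $d(x_i,x)\le\epsilon_i$. Given a loop $\gamma$ based at $x_i$ and contained in $B_{r+\rho}(x_i)$, the first conclusion of Lemma \ref{retract_homotopy}(i) --- which, by the Remark following that lemma, needs no contractibility hypothesis --- produces a broken-geodesic loop $\gamma'$ in $X$, which we take based at $x$ (choosing $q_0=x$) and $5\epsilon_i$-close to $\gamma$, hence contained in $B_{r+\rho+5\epsilon_i}(x)\subset B_R(x)$; we set $\Phi_i([\gamma]):=[\gamma']\in\pi_1(B_R(x),x)$. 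Well-definedness requires two things. Independence of the approximant: two loops in $X$ based at $x$ and $5\epsilon_i$-close to $\gamma$ are joined, by the ``moreover'' part of Lemma \ref{retract_homotopy}(i), by a homotopy in $B_R(x)$ whose basepoint track is the constant path at $x$ (the geodesic $l_0$ in that construction can be taken constant), i.e.\ a \emph{based} homotopy, so they represent the same class. Invariance under a based homotopy $\gamma_1\simeq\gamma_2$ inside $B_{r+\rho}(x_i)$: Lemma \ref{retract_homotopy}(ii) yields a free homotopy $\gamma_1'\simeq\gamma_2'$ in $B_R(x)$ whose basepoint track is a loop at $x$ of diameter $O(\epsilon_i)$, hence lies in $B_T(x)$ and is null-homotopic in $B_R(x)$ by the uniform contractibility above, so $[\gamma_1']=[\gamma_2']$. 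Thus $\Phi_i$ is a well-defined map on $\pi_1(B_{r+\rho}(x_i),x_i)$.

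That $\Phi_i$ is a homomorphism follows by approximating the concatenation $\gamma_1\cdot\gamma_2$ with a partition of the parameter interval that refines the midpoint, so that the broken geodesic obtained is the concatenation of an approximant of $\gamma_1$ with one of $\gamma_2$, both based at $x$, which by the previous paragraph is based-homotopic in $B_R(x)$ to $\gamma_1'\cdot\gamma_2'$. For the surjectivity statement, recall $G(r)$ is generated by loops $c$ based at $x$ and contained in $\overline{B_r(x)}$. Applying the first conclusion of Lemma \ref{retract_homotopy}(i) with the two spaces interchanged (again no contractibility hypothesis is needed) gives a loop $c_i$ in $M_i$, based at $x_i$ and $5\epsilon_i$-close to $c$, hence lying in $B_{r+5\epsilon_i}(x_i)\subset B_{r+\rho}(x_i)$. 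Since $c$ is itself a loop in $X$ that is $5\epsilon_i$-close to $c_i$, the ``moreover'' part of Lemma \ref{retract_homotopy}(i) gives $\Phi_i([c_i])=[c]$; therefore $G(r)\subset\mathrm{im}(\Phi_i)$.

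The main obstacle is bookkeeping rather than fresh geometry: one must verify that every transferred homotopy --- and in particular every small loop arising as a basepoint track --- stays inside $B_R(x)$, so that the uniform contractibility furnished by Theorem \ref{main_pre} is available, and one must check that well-definedness descends all the way to $\pi_1$ and not merely to free-homotopy (conjugacy) classes; this last point is exactly where the constancy, or at least the controlled smallness, of the basepoint tracks in the constructions of Lemma \ref{retract_homotopy} is used. All of the genuinely geometric input is packaged in Theorem \ref{main_pre} and the uniform radius $T$ it provides.
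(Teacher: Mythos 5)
Your proposal is correct and follows essentially the same route as the paper: extract a uniform contractibility radius $T$ on the compact set $\overline{B_{(r+R)/2}(x)}$ from Theorem \ref{main_pre}, define $\Phi_i$ by pushing loops from $B_{r+\rho}(x_i)$ into $B_R(x)$ via the constructions of Lemma \ref{retract_homotopy}(i)--(ii) (with all fillings kept inside $B_R(x)$), and obtain surjectivity onto $G(r)$ by approximating each generator by a broken-geodesic loop in $B_{r+\rho}(x_i)$ and invoking the ``moreover'' part of Lemma \ref{retract_homotopy}(i). Your handling of the basepoint issue (taking $l_0$ constant, and keeping the basepoint track of the transferred homotopy small --- which implicitly requires arranging the triangulation so that the segment $\{\ast\}\times[0,1]$ lies in its $1$-skeleton) is in fact more explicit than the paper's, which simply asserts that the construction defines a homomorphism.
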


\begin{proof}
	Since $\lim_{t\to 0}\rho(t,x)=0$ for all $y\in B_1(p)$, by the compactness of $\overline{B_{r+2\rho}(x)}$, we can choose $T>0$ such that
	$$\rho(T,y)\le (R-r)/5<\rho$$
	for all $y\in B_{r+2\rho}(x)$; in other words, any loop in $B_T(y)$ is contractible in $B_{r+3\rho}(x)\subset B_R(x)$. We set $\epsilon_0=T/20$ and $N$ large so that
	$$d_{GH}(B_2(p_i),B_2(p))=\epsilon_i\le \epsilon_0$$
	for all $i\ge N$.
	
	Let $c_i$ be a loop based at $x_i$ with $\mathrm{im}(c_i)\subset B_{r+\rho}(x_i)$. Following the method in Lemma \ref{retract_homotopy}(i), we can construct a loop $c$ based at $x$, as a broken geodesic loop, such that $c$ is $5\epsilon_i$-close to $c_i$ and $$\mathrm{im}(c)\subset B_{r+\rho+6\epsilon_i}(x)\subset B_R(x).$$
	Also, by the proof of Lemma \ref{retract_homotopy}(ii), if $c'_i$, another loop based at $x_i$ contained in $B_{r+\rho}(x_i)$, is homotopic to $c_i$ in $B_{r+\rho}(x_i)$, then the corresponding $c'$ is homotopic to $c$ in $B_R(x)$. It is clear that this defines a group homomorphism
	$$\Phi_i: \pi_1(B_{r+\rho}(x_i),x_i)\to \pi_1(B_R(x),x).$$
	
	It remains to prove that $\Phi_i$ is onto $G(r)$. Let $[c]$ be a generator of $G(r)$. Because $B_R(x)$ is semi-locally simply connected, we can represent $[c]$ by a geodesic loop $c:[0,1]\to \overline{B_r(x)}$ based at $x$. Let $0=t_0,t_1,...,t_k=1$ be a partition of $[0,1]$ so that $c|_{[t_j,t_{j+1}]}$ has length $\le\epsilon$. Choose points $z_{i,j}$ in $B_{r+\epsilon_i}(x)$ that is $\epsilon_i$ close to $c(t_j)$ (we set $z_{i,0}=z_{i,k}=x_i$) and then connect these points by minimal geodesics. With this, we result in a broken geodesic loop $c_i$ in $B_{r+\rho}(x)$. It is direct to check $\Phi_i([c_i])=[c]$ by our construction and Lemma \ref{retract_homotopy}(i).
\end{proof}

\begin{lem}\label{halfway}
	For each $i$, let $\mathcal{S}_i=\Phi^{-1}_i(\mathcal{S})$, that is, the set of all the elements in $\pi_1(B_{r+\rho}(x),x)$ mapped to $\mathcal{S}$ under $\Phi_i$. Let $\gamma_i$ be the unit speed geodesic loop of minimal length among all the geodesic loops representing elements of $\mathcal{S}_i$. Suppose that $\gamma_i$ has length $d_i$. Then\\
	(1) $d_i\to d$;\\
    (2) $\gamma_i$ is minimal on $[0,d_i/2]$ and $[d_i/2,d_i]$.
\end{lem}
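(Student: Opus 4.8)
**The plan is to prove Lemma \ref{halfway} in two steps: first establish the length convergence $d_i \to d$ by a "squeeze" argument using $\Phi_i$, then deduce the halfway property as a direct consequence of minimality.**

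For part (1), I would split the convergence into two inequalities. \emph{Lower bound:} since $\Phi_i(\gamma_i) \in \mathcal{S}$ by definition of $\mathcal{S}_i$, and $\mathcal{S}$ consists of elements realizing the minimal displacement $d$ on the universal cover $U$ of $B_R(x)$, the projected loop $\Phi_i(\gamma_i)$ cannot be contractible in $B_R(x)$, and in fact any loop based at $x$ representing an element of $\mathcal{S}$ must have length $\ge d$ (its lift to $U$ is a path from $\tilde x$ to $h\tilde x$ with $h \in \mathcal S$). Applying Lemma \ref{retract_homotopy}(i), the broken-geodesic loop $c$ built from $\gamma_i$ is $5\epsilon_i$-close to $\gamma_i$, so $\mathrm{length}(\gamma_i) \ge \mathrm{length}(c) - C\epsilon_i \ge d - C\epsilon_i$; hence $\liminf_i d_i \ge d$. \emph{Upper bound:} fix $h \in \mathcal{S}$ realized by a geodesic loop $\gamma$ in $B_R(x)$ of length $d$ with the halfway property (as described in the text preceding the lemma). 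Since $\Phi_i$ is onto $G(r)$ by Lemma \ref{local_surj} and $[\gamma] \in G(r)$ (because $\mathrm{im}(\gamma) \subset \overline{B_r(x)}$ — here I would need $d/2 < r$, which is assumed), there is an element $g_i \in \Phi_i^{-1}([\gamma]) \subset \mathcal{S}_i$; the construction in the proof of Lemma \ref{local_surj} produces a representative broken-geodesic loop whose length is $\le d + C\epsilon_i$. Therefore $d_i \le d + C\epsilon_i$, giving $\limsup_i d_i \le d$. Combining, $d_i \to d$.

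For part (2), this is the standard Sormani-type argument: $\gamma_i$ is, by construction, the geodesic loop of \emph{minimal} length representing an element of $\mathcal{S}_i$. If $\gamma_i$ failed to be minimal on $[0, d_i/2]$, we could replace $\gamma_i|_{[0,d_i/2]}$ by a shorter minimal geodesic between its endpoints (inside the lift in the universal cover of $B_{r+\rho}(x_i)$, which exists for $i$ large by Theorem \ref{main_pre}), producing a loop freely homotopic to $\gamma_i$ — hence representing the same conjugacy class, and after adjusting the basepoint the same element — but of strictly smaller length, contradicting minimality. The same reasoning applies to $[d_i/2, d_i]$. One subtlety: to conclude the shortened loop still maps into $\mathcal{S}$ under $\Phi_i$, I would use that $\Phi_i$ depends only on the homotopy class in $B_{r+\rho}(x_i)$, and the shortening is a homotopy; I should also check the shortened pieces stay inside $B_{r+\rho}(x_i)$, which holds since $d_i/2 \to d/2 < r$ and minimal geodesics between nearby points of a short loop stay in a slightly larger ball.

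\textbf{The main obstacle} I anticipate is the bookkeeping around basepoints and conjugacy classes: $\pi_1$ is based at $x_i$, but the natural objects (geodesic loops, the set $\mathcal{S}$, free homotopies used in shortening) are most cleanly described up to conjugacy / free homotopy, so I must be careful that "representing an element of $\mathcal{S}_i$" is preserved under the shortening operation and that the minimal-length geodesic loop is achieved (compactness of $\overline{B_{r+\rho}(x_i)}$ and discreteness of the displacement spectrum of the free isometric $\pi_1$-action on the universal cover handle existence, but the argument should be spelled out). The length-convergence estimates themselves are routine applications of Lemmas \ref{retract_homotopy} and \ref{local_surj} once the direction of each inequality is pinned down.
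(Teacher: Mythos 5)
Your part (1) is essentially sound, and in fact your route for the upper bound is more direct than the paper's: you transfer a fixed length-$d$ geodesic loop representing an element of $\mathcal{S}$ into $M_i$ via the construction of Lemma \ref{local_surj} with a \emph{fixed} partition, so the broken geodesic has length $\le d+C\epsilon_i$, whereas the paper argues by contradiction, cutting a transferred loop $\alpha_i$ (whose length it only controls by $3d$) into short based pieces $\beta_{i,j}$. Two small repairs are needed on your side: $5\epsilon_i$-closeness alone does not compare lengths, so for the lower bound you must control the number of partition pieces (e.g.\ prove the upper bound first so $d_i$ is bounded, or pass to a uniform limit $\gamma_\infty$ of the $\gamma_i$ as the paper does and use lower semicontinuity of length together with $[\gamma_\infty]\in\mathcal{S}$); and both bounds implicitly need that a class in $\mathcal{S}_i$ with a representative of length $\ell<2r$ admits a \emph{geodesic} loop representative of length $\le\ell$ inside $B_{r+\rho}(x_i)$ (a shortening step the paper also uses tacitly).

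Part (2), however, has a genuine gap. If $\gamma_i$ fails to be minimal on $[0,d_i/2]$, replacing $\gamma_i|_{[0,d_i/2]}$ by a shorter minimal geodesic $\sigma_i$ in $M_i$ from $x_i$ to $\gamma_i(d_i/2)$ does \emph{not} produce a loop homotopic to $\gamma_i$: the class changes by the bigon $c_{i,1}=\gamma_i|_{[0,d_i/2]}\cdot\sigma_i^{-1}$, which may be noncontractible in $B_{r+\rho}(x_i)$ — precisely the short loops of $M_i$ that disappear in the limit (Otsu-type examples) which force $\gamma_i$ to be minimal only within $\mathcal{S}_i$ rather than among all noncontractible loops. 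So "the shortening is a homotopy" is false, and the shortened loop need not represent an element of $\mathcal{S}_i$. The universal-cover variant you mention does not rescue this: failure of minimality downstairs does not yield a shorter geodesic from $\tilde x_i$ to the lifted midpoint in the universal cover of $B_{r+\rho}(x_i)$ (the shortcut may lift to a path ending at a different orbit point), and in any case the statement, as used via Lemma \ref{uniform_cut_manifold} in the proof of Theorem \ref{main_local}, needs $d_{M_i}(x_i,\gamma_i(d_i/2))=d_i/2$ in the base metric so that the midpoints converge to a point at distance exactly $1/2$ from the cone vertex. The paper's proof of (2) supplies the missing mechanism: writing $[\gamma_i]=[c_{i,1}][c_{i,2}]$ with both pieces of length $<d_i$, minimality of $\gamma_i$ in $\mathcal{S}_i$ forces $\Phi_i[c_{i,j}]\notin\mathcal{S}$; then, using part (1), the uniform limits $c_{\infty,j}$ have length $\le d$, hence their classes are trivial or lie in $\mathcal{S}$, hence are trivial, contradicting $[c_{\infty,1}][c_{\infty,2}]=[\gamma_\infty]\neq e$. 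Without this passage to the limit, minimality within $\mathcal{S}_i$ alone cannot exclude that both pieces are nontrivial classes outside $\mathcal{S}_i$.
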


\begin{proof}
	(1) Passing to a subsequence if necessary, geodesic loops $\gamma_i$ converge uniformly to a limit geodesic loop $\gamma_\infty$ based at $x$. By construction of $\Phi_i$ and Lemma \ref{retract_homotopy}(i), $$\Phi_i[\gamma_i]=[\gamma_\infty]\in\mathcal{S}$$ for all $i$ large. $\gamma_\infty$ has length
	$l(\gamma_\infty)$ at most $\liminf_{i\to\infty} d_i$.
	Let $\gamma$ be a geodesic loop based at $x$ with length $d$ and $[\gamma]=[\gamma_\infty]$.
	Since $\gamma$ has the shortest length among all non-contractible loops in $\pi_1(B_R(x),x)$, we deduce that
	$$d\le l(\gamma_\infty)\le \liminf_{i\to\infty} d_i.$$
	
	It remains to show that $d\ge \limsup_{i\to\infty} d_i$. We prove this by contradiction. Suppose that there is $\delta>0$ such that $$d+\delta<\limsup_{i\to\infty} d_i.$$ Passing to a subsequence, we have $d+\delta<d_i$ for $i$ large. Further shrinking $\delta$ if necessary, we can assume that any element of $\pi_1(B_R(x),x)$ outside $\mathcal{S}\cup\{e\}$ has length at least $d+\delta$ (we can assume so because $\pi_1(B_R(x),x)$ acts on the universal cover $U$ discretely). For $\gamma$ we chosen above, we can follow Lemma \ref{retract_homotopy}(1) to obtain a sequence of loops $\alpha_i$ in $X_i$ that is $5\epsilon_i$-close to $\gamma$, where $$\epsilon_i=d_{GH}((B_R(x_i),x_i),(B_R(x),x))\le 2^{-i}\to 0.$$
	In particular, $\alpha_i$ converges uniformly to $\gamma$. Since $\gamma$ is contained in $\overline{B_{d/2}(x)}$, we have $\alpha_i$ contained in $\overline{B_{d/2+6\epsilon_i}(x_i)}$. Also, due to our construction, it is clear that $\alpha_i$ has length $l(\alpha_i)\le 3d$. For each $\alpha_i$, we divide $\alpha_i$ into $N_i$ pieces $\alpha_i|_{[t_{i,j},t_{i,j+1}]}$ such that each piece has length between $\delta/4$ and $\delta/2$. Note that $N_i\le 12d/\delta$. Passing to a subsequence, we can assume that all $N_i$ are equal. For each $j=0,..,N-1$, let $\beta_{i,j}$ be a loop joining a minimal geodesic from $x_i$ to $\alpha_i(t_{i,j})$, $\alpha_i|_{[t_{i,j},t_{i,j+1}]}$, and a minimal geodesic from $\alpha_i$ back to $x_i$. $\beta_{i,j}$ has length
	$$l(\beta_{i,j})\le 2(d/2+6\epsilon_i)+\delta/2=d+\delta/2+12\epsilon_i.$$
	As $i\to \infty$, each $\beta_{i,j}$ converges to a loop $\beta_{j}$ based on $x$ with $\beta_j$ has length $\le d+\delta/2$. By the choice of $\delta$, each $\Phi_i([\beta_{i,j}])=[\beta_j]$ either is trivial or belongs to $\mathcal{S}$. If all $[\beta_j]$ are trivial, then
	$$[\gamma]=\Phi_i([\alpha_i])=\prod_{j=0}^{N-1} \Phi([\beta_{i,j}])=\prod_{j=0}^{N-1} [\beta_j]$$
	would be trivial too, a contradiction. Consequently, there must be some $[\beta_j]\in\mathcal{S}$. For such a $[\beta_j]$, we have $[\beta_{i,j}]\in\mathcal{S}_i$,
	but for $i$ large,
	$$l(\beta_{i,j})\le d+\delta/2+12\epsilon_i<d_i=l(\gamma_i).$$
	This is a contradiction to our choice of $\gamma_i$. This completes the proof of (1).
	
	(2) Let $\sigma_i$ be a minimal geodesic from $x$ to $\gamma_i(d_i/2)$. Suppose that $\sigma_i$ has length $<d_i/2$. Let $c_{i,1}$ be the loop joining $\gamma_i|_{[0,d_i/2]}$ and $\sigma_i^{-1}$ and let $c_{i,2}$ be the loop joining $\sigma_i$ and $\gamma_i|_{[d_i/2,d_i]}$. It is clear that $[c_{i,1}][c_{i,2}]=[\gamma_i]$ in $\pi_1(B_{r+\rho}(x_i),x_i)$. Since both $c_1$ and $c_2$ have length strictly shorter than $\gamma_i$, due to our choice of $\gamma_i$, it follows that $\Phi_i[c_1]$ and $\Phi_i[c_2]$ do not belong to $\mathcal{S}$. Each $c_{i,j}$ consists of two geodesics, thus $c_{i,j}$ subconverges $c_{\infty,j}$ with $\Phi_i[c_{i,j}]=[c_{\infty,j}]$, where $j=1,2$. $c_{\infty,j}$ has length $$l(c_{\infty,j})\le \lim_{i\to\infty}d_i=d,$$ where $j=1,2$. However, $[c_{\infty,j}]\notin \mathcal{S}$. Hence $[c_{\infty,j}]=e$, which contradicts with $[c_{\infty,1}][c_{\infty,2}]=[\gamma_\infty]\not=e$.
\end{proof}

With the above preparations, now we are ready to prove Theorem \ref{main_local}, that is, $\lim_{t\to 0}\rho(t,x)/t=1$. Suppose that Theorem \ref{main_local} fails, then we can use Lemmas \ref{halfway} and \ref{uniform_cut_manifold} to find uniform cut points on manifolds, then pass these uniform cut points from the manifolds to a tangent cone of the limit space, which would end in a contradiction to the structure of tangent cones. This argument is a modification of \cite{Sor}.

\begin{proof}[Proof of Theorem \ref{main_local}]
	Fix $x\in B_1(p)$. Suppose that there are $\delta>0$ and $r_j\to 0$ such that $\lim_{j\to\infty}\rho(r_j,x)/r_j\ge1+\delta$. Let $R_j=(1+\delta)r_j$. Then the following holds:\\
	(1) each $B_{R_j}(x)$ is not simply connected,\\
	(2) elements in $\mathcal{S}_j$, the set of shortest nontrivial loops in $\pi_1(B_{R_j}(x),x)$, are represented by loops contained in $B_{r_j}(x)$.
	
	Choose $\epsilon>0$ sufficiently small so that $\epsilon<\psi(\epsilon)<\delta/4$, where $\psi(\epsilon)$ is given in Lemma \ref{uniform_cut_pre}. With this $\epsilon$, 
	$$R_j=(1+\delta)r_j>(1/2+2\psi(\epsilon))d_j,$$
	where $d_j$ is the length of elements in $\mathcal{S}_j$. For each fixed $j$, by Lemma \ref{halfway}, there is $\gamma_{i,j}$ of length $d_{i,j}$ with halfway property and $d_{i,j}\to d_j$ as $i\to\infty$. Let $m_{i,j}=\gamma_{i,j}(d_{i,j}/2)$ be the midpoint of $\gamma_{i,j}$. Since $R_j>(1/2+2\psi(\epsilon))d_{i,j}$ for all $i$ large, applying Lemma \ref{uniform_cut_manifold}, we have
	$$d(y,m_{i,j})\ge \psi(\epsilon)d_{i,j}$$
	for all $y\in \partial B_{(1/2+\epsilon)d_{i,j}}(x)$.
	
	Next we consider the convergence:
	$$(d_j^{-1}B_1(x),x)\overset{GH}\longrightarrow (C_xX,v),$$
	where $C_xX$ is a metric cone with vertex $v$ since $X$ is a non-collapsing Ricci limit space \cite{CC1}. By a standard diagonal argument, we have a convergent subsequence
	$$(d_{i(j),j}^{-1}B_1(x_{i(j)}),x_{i(j)})\overset{GH}\longrightarrow (C_xX,v).$$
	With respect to the above convergence, $m_{i(j),j}\to m\in C_xX$ with $d(m,v)=1/2$.
	
	We claim that there are no rays starting from $v$ and going through $m$, which contradicts the fact that $C_xX$ is a metric cone with vertex $v$. Let $y$ be any point in $\partial B_1(v)$ and let $y_j$ in $d_{i(j),j}^{-1}B_1(x_{i(j),j})$ converges to $y$. Clearly $d(y_j,x)=1+\eta_j$ on $d_{i(j),j}^{-1}B_1(x_{i(j)})$, where $\eta_j\to 0$. Let $z_j$ be the point where a minimal geodesic from $m_j$ to $y_j$ intersects $d_{i(j),j}^{-1}\partial B_{(1/2+\epsilon)d_{i(j),j}}(x_{i(j)})$. Then on $d_{i(j),j}^{-1}B_1(x_{i(j)})$,
	\begin{align*}
		d(y_j,m_{i(j),j})&=d(y_j,z_j)+d(z_j,m_{i(j),j})\\
		&\ge (1+\eta_j)-(1/2+\epsilon)+\psi(\epsilon)\\
		&= 1/2+\eta_j+(\psi(\epsilon)-\epsilon)
	\end{align*}
	Let $j\to\infty$, we see that
	$$d(y,m)\ge 1/2+(\psi(\epsilon)-\epsilon)>1/2$$
	for all $y\in \partial B_1(v)$. This proves the claim and we end in the desired contradiction.
\end{proof}

\section{Fundamental groups of limit spaces}\label{section_fund_group}

With the help of Theorem A, we can generalize the structure results on fundamental groups of manifolds with Ricci curvature and volume lower bounds, to that of non-collapsing Ricci limit spaces. Some of the result in this section are known for the \textit{revised} fundamental groups \cite{SW1} of Ricci limit spaces.

Let $n\in\mathbb{N}$, $\kappa\in\mathbb{R}$, $D,v>0$. Let $\mathcal{M}(n,\kappa,D,v)$ be the set of all limit spaces coming from some sequence of compact $n$-manifolds $M_i$ with
$$\mathrm{Ric}_{M_i}\ge (n-1)\kappa, \quad \mathrm{diam}(M_i)\le D,\quad \mathrm{vol}(M_i)\ge v.$$	
Let $\mathcal{M}(n,\kappa,v)$ be the set of all pointed limit spaces coming from some sequence of complete $n$-manifolds $(M_i,p_i)$ with
$$\mathrm{Ric}_{M_i}\ge (n-1)\kappa,\quad \mathrm{vol}(B_1(p_i))\ge v.$$ 

\begin{cor}\label{surj}
	Let $M_i$ be a sequence of compact $n$-manifolds of
	$$\mathrm{Ric}_{M_i}\ge (n-1)\kappa, \quad \mathrm{diam}(M_i)\le D,\quad \mathrm{vol}(M_i)\ge v$$
	converging to $X\in\mathcal{M}(n,\kappa,D,v)$. Then for any $i$ large there is a surjective group homomorphism 
	$\Phi_i: \pi_1(M_i)\to \pi_1(X)$. In particular, if $M_i$ is simply connected for all $i$ large, then $X$ is simply connected as well.
\end{cor}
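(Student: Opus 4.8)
The plan is to prove Corollary~\ref{surj} as the global, compact analogue of Lemma~\ref{local_surj}, with \emph{all} of the topological control supplied by the limit space through Theorem~A. First I would record the standing features of $X$. Since $\mathrm{diam}(M_i)\le D$, the sequence is precompact and $X$ is compact with $\mathrm{diam}(X)\le D$; by Theorem~A it is semi-locally simply connected. Using compactness I would upgrade this to a uniform statement: there is $T>0$ such that every loop contained in a metric ball $B_T(z)\subset X$ is null-homotopic in $X$. (For each $z$ pick $r_z$ with $\rho(r_z,z)<\infty$, so loops in $B_{r_z}(z)$ contract inside $B_{\rho(r_z,z)+1}(z)\subset X$; extract a finite subcover $\{B_{r_{z_k}/2}(z_k)\}$ and set $T=\min_k r_{z_k}/2$.) After rescaling all metrics by one fixed constant I may assume $D<1$, so that $X=B_1(z)$ and $\overline{B_2(z)}=X$ is compact for every $z\in X$, and $T$ and all Gromov--Hausdorff distances rescale accordingly. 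Then for each $i$ with $\epsilon_i:=d_{GH}(M_i,X)\le\epsilon_0:=T/20$, the pair $\bigl((X,x),(M_i,p_i)\bigr)$ satisfies hypotheses (1)--(3) of Lemma~\ref{retract_homotopy} with this $T$: condition (1) holds since both $X$ and $M_i$ are compact, and condition (3) holds because every loop in $B_T(q)$ is contractible in $B_1(q)=X$.

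Next I would build $\Phi_i$ exactly as in the proof of Lemma~\ref{local_surj}. Given a loop $\gamma$ based at $p_i$ in $M_i$, the construction in the proof of Lemma~\ref{retract_homotopy}(i) produces a broken-geodesic loop $\gamma'$ in $X$ that is $5\epsilon_i$-close to $\gamma$, and after adjusting the basepoint by a short segment I may take $\gamma'$ based at $x$. By the uniqueness clause of Lemma~\ref{retract_homotopy}(i), the free homotopy class of $\gamma'$ in $X$ depends only on $\gamma$. If $\gamma_1\simeq\gamma_2$ rel basepoint in $M_i$, then Lemma~\ref{retract_homotopy}(ii) transports the homotopy to a free homotopy $H'$ in $X$ between $\gamma_1'$ and $\gamma_2'$; moreover the tail $H'(\mathrm{pt},\cdot)$ is a loop at $x$ of diameter $<T$, hence null-homotopic in $X$ by the choice of $T$, so $H'$ can be modified into a based homotopy, giving $[\gamma_1']=[\gamma_2']$ in $\pi_1(X)$. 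Checking that the assignment respects concatenation up to controlled error, exactly as in Lemma~\ref{local_surj}, shows that $\Phi_i[\gamma]:=[\gamma']$ is a well-defined group homomorphism $\pi_1(M_i)\to\pi_1(X)$.

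For surjectivity I would argue in the opposite direction. Given a loop $c$ based at $x$ in $X$, the first part of Lemma~\ref{retract_homotopy}(i) --- whose conclusion, by the remark following that lemma, does not use the contractibility hypothesis --- yields a broken-geodesic loop $c_i$ in $M_i$ that is $5\epsilon_i$-close to $c$. Then $c$ and the loop $c_i'$ chosen in defining $\Phi_i[c_i]$ are both $5\epsilon_0$-close to the loop $c_i$ of $M_i$, so the uniqueness clause of Lemma~\ref{retract_homotopy}(i) forces $[c_i']=[c]$, i.e.\ $\Phi_i[c_i]=[c]$. Hence $\Phi_i$ is onto, and the last assertion is immediate: if $\pi_1(M_i)$ is trivial for all large $i$, then $\pi_1(X)=\Phi_i(\pi_1(M_i))$ is trivial.

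The main obstacle here is conceptual rather than computational. The manifolds $M_i$ carry no uniform local contractibility --- by Otsu's example they may contain arbitrarily short nontrivial loops --- so the homotopy-transport machinery of Lemma~\ref{retract_homotopy} cannot be run ``from the manifold side''; it applies only because the \emph{limit} space is the one satisfying the contractibility hypothesis, and that is precisely what Theorem~A provides. Beyond invoking Theorem~A and the compactness of $X$ to get the uniform radius $T$, the only genuinely delicate point is the basepoint bookkeeping needed to convert the free-homotopy statements of Lemma~\ref{retract_homotopy} into an honest group homomorphism; this is handled, as above, by observing that the transported homotopy's basepoint track is a loop small enough to be null-homotopic in $X$, exactly as in Lemma~\ref{local_surj}.
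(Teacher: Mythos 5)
Your argument is correct and is essentially the paper's own: the paper proves Corollary \ref{surj} by citing Lemma \ref{local_surj}, whose proof is precisely the construction you reproduce (build $\Phi_i$ via the nearby broken-geodesic loops and homotopy transport of Lemma \ref{retract_homotopy}, with the needed contractibility hypothesis supplied on the limit side by Theorem A/Theorem \ref{main_pre}, and surjectivity by pushing a loop of $X$ to a nearby loop of $M_i$ and back). You simply inline that local lemma in the global compact setting (with the uniform $T$ from compactness and the small-basepoint-track adjustment), which matches the paper's route rather than departing from it.
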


\begin{proof}
	The result follows from Lemma \ref{local_surj} (also see \cite[Theorem 1.1]{SW1}).
\end{proof}

\begin{rem}\label{rem_example}
We would like to point out that there is a mistake in \cite[Theorem 1.4]{SW1}. The kernel of $\Phi_i$ contains all small loops based at different points, so Anderson's result does not apply to bound the order of $\mathrm{ker}\Phi_i$. In fact, the kernel could be infinite. For example, using Ostu's construction \cite{Ot}, we have a sequence of Riemannian metrics on $(S^3 \times \mathbb RP^2) \# (S^3 \times \mathbb RP^2)$ converging to $S(S^2 \times \mathbb RP^2) \# S(S^2 \times \mathbb RP^2)$ with Ricci curvature bounded from below, where $S(S^2 \times \mathbb RP^2)$ is the spherical suspension of $S^2 \times \mathbb RP^2$. Here the  kernel is a free product $\mathbb Z_2 * \mathbb Z_2$, which is an infinite group. In fact, by taking more connected sums, the kernel may have exponential growth. 
\end{rem}

\begin{thm}
	Given $n,\kappa,D,v$, there are only finitely many isomorphic classes of fundamental groups among spaces in $\mathcal{M}(n,\kappa,D,v)$.
\end{thm}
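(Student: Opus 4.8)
The plan is to combine the surjectivity statement from Corollary \ref{surj} with the classical finiteness theorem for fundamental groups of manifolds with a lower Ricci bound, a diameter upper bound, and a volume lower bound. Recall that Anderson proved that, given $n,\kappa,D,v$, there are only finitely many isomorphism classes of groups arising as $\pi_1(M)$ for a compact $n$-manifold $M$ with $\mathrm{Ric}_M\ge(n-1)\kappa$, $\mathrm{diam}(M)\le D$, and $\mathrm{vol}(M)\ge v$. (More precisely, one has a uniform bound on the number of generators and on the size of the set of relations needed, so the number of isomorphism types is finite.) The idea is that every $X\in\mathcal{M}(n,\kappa,D,v)$ has a fundamental group that is a quotient of $\pi_1(M_i)$ for all large $i$ in an approximating sequence; so $\pi_1(X)$ lies among the quotients of the finitely many groups appearing in Anderson's list.

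First I would fix $X\in\mathcal{M}(n,\kappa,D,v)$ and an approximating sequence $M_i\overset{GH}\longrightarrow X$ of compact $n$-manifolds satisfying the stated bounds. By Corollary \ref{surj}, for all $i$ sufficiently large there is a surjective homomorphism $\Phi_i:\pi_1(M_i)\twoheadrightarrow\pi_1(X)$; in particular $\pi_1(X)$ is a quotient of $\pi_1(M_i)$. Second, I would invoke Anderson's finiteness: there is a finite list $\Gamma_1,\dots,\Gamma_N$ of groups, depending only on $n,\kappa,D,v$, such that every compact $n$-manifold $M$ with the stated bounds has $\pi_1(M)$ isomorphic to one of the $\Gamma_k$. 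Hence $\pi_1(X)$ is isomorphic to a quotient of some $\Gamma_k$. Third — and this is the point that needs a little care — a single finitely presented group can have infinitely many non-isomorphic quotients, so one cannot simply say ``quotients of finitely many groups form a finite set.'' What saves us is that Anderson's argument actually produces a uniform bound $g=g(n,\kappa,D,v)$ on the number of generators of $\pi_1(M_i)$ \emph{together with} a uniform bound on the lengths of a generating set of relators, coming from a uniform short-generator/short-relator estimate (loops of length $\le 2D$ generate, and relators are words of bounded length in these generators). The same bounded presentation descends through the surjection $\Phi_i$: a bounded presentation of $\pi_1(M_i)$ maps to a bounded presentation of $\pi_1(X)$ (adding, if needed, the finitely many relators that generate $\ker\Phi_i$ up to bounded length — or, more simply, noting that $\pi_1(X)$ itself is generated by short loops and has short relators by the same Gromov-type argument applied directly to $X$). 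Since there are only finitely many groups admitting a presentation with at most $g$ generators and relators of length at most some fixed $\ell$, the number of isomorphism classes of $\pi_1(X)$ is finite.

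Concretely, the cleanest route avoids quoting the kernel structure altogether: one applies Gromov's short-generators argument directly to the length space $X$, using that $X$ is semi-locally simply connected (Theorem A), that its universal cover $\widetilde X$ carries a free isometric $\pi_1(X)$-action, and that $\widetilde X$ inherits the measure-theoretic lower Ricci bound and volume lower bound from the sequence via Cheeger--Colding volume convergence, so that the packing/covering estimates bounding the number of short generators and the length of relators go through verbatim. This yields a presentation of $\pi_1(X)$ with at most $g(n,\kappa,D,v)$ generators and relators of length at most $\ell(n,\kappa,D,v)$, uniformly over $X\in\mathcal{M}(n,\kappa,D,v)$, and finiteness follows.

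I expect the main obstacle to be the passage from ``finitely many groups $\pi_1(M_i)$'' to ``finitely many groups $\pi_1(X)$'': the naive quotient argument is insufficient, and one must either extract a uniformly bounded presentation (the short-generator, short-relator estimate) that survives the limit, or rerun Gromov's argument intrinsically on $X$ using that the non-collapsing Ricci limit geometry still supports the needed volume-comparison and packing bounds. Once a uniform finite presentation is in hand, the conclusion is immediate.
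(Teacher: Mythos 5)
Your ``cleanest route'' is exactly the paper's argument: the paper's proof consists of the single remark that Anderson's original proof applies verbatim, which is enabled precisely by Theorem A (so $X$ has a universal cover with a free isometric $\pi_1(X)$-action) and by the fact that relative volume comparison and the resulting packing bounds still hold there, giving a uniformly bounded presentation. You correctly flag that the first route via the surjections $\Phi_i$ alone is insufficient (a fixed group has infinitely many quotient types), and your intrinsic short-generator/short-relator argument on $X$ matches the intended proof.
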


\begin{proof}
	Anderson's original proof \cite{An1} applies through verbatim.
\end{proof}

Even though there are only finitely many isomorphic classes of fundamental groups among spaces in $\mathcal{M}(n,\kappa,D,v)$, the stability result is not true. Namely, there are spaces in $\mathcal{M}(n,\kappa,D,v)$ which are arbitrarily Gromov-Hausdorff close but have different fundamental groups (see \cite{Ot} or Remark \ref{rem_example} above). 

\begin{thm}
	Given $n,D,v$, there are positive constants $\epsilon(n,D,v)$ and $C(n,D,v)$ such that for any $X\in \mathcal{M}(n,\epsilon,D,v)$, $\pi_1(X)$ contains a normal abelian subgroup generated by at most $n$ elements of index $\le C$.
\end{thm}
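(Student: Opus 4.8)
The plan is to deduce the statement from the corresponding structure theorem for closed manifolds by pushing a normal abelian subgroup forward along the surjection of fundamental groups furnished by Corollary~\ref{surj}.

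\emph{Step 1 (manifold input).} I would invoke the structure theorem of Kapovitch and Wilking for manifolds with Ricci curvature bounded below, in the noncollapsed regime: there exist $\epsilon_0=\epsilon_0(n,D,v)>0$ and $C_0=C_0(n,D,v)$ such that every closed Riemannian $n$-manifold $M$ with $\mathrm{Ric}_M\ge -(n-1)\epsilon_0$, $\mathrm{diam}(M)\le D$ and $\mathrm{vol}(M)\ge v$ has the property that $\pi_1(M)$ contains a normal abelian subgroup, generated by at most $n$ elements, of index $\le C_0$. (The volume lower bound is exactly what promotes the a priori nilpotent finite-index subgroup of the general theorem to an abelian one.) Set $\epsilon:=\epsilon_0$ and $C:=C_0$.

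\emph{Step 2 (transfer to the limit).} Let $X\in\mathcal{M}(n,\epsilon,D,v)$, realized as a Gromov--Hausdorff limit of closed $n$-manifolds $M_i$ satisfying the hypotheses in the definition of $\mathcal{M}(n,\epsilon,D,v)$. Since $\mathrm{diam}(M_i)\le D$, the limit $X$ is a compact, connected, locally path-connected metric space, so by Theorem~A it is semi-locally simply connected and $\pi_1(X)$ is a genuine, in particular finitely generated, group. By Corollary~\ref{surj}, for all sufficiently large $i$ there is a surjective homomorphism $\Phi_i\colon\pi_1(M_i)\to\pi_1(X)$. Fix such an $i$, let $A_i\trianglelefteq\pi_1(M_i)$ be the normal abelian subgroup from Step~1, and set $A:=\Phi_i(A_i)\le\pi_1(X)$.

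\emph{Step 3 (verification).} The subgroup $A$ is abelian, being a homomorphic image of $A_i$; it is normal in $\pi_1(X)$, since the image of a normal subgroup under a surjective homomorphism is normal; it is generated by at most $n$ elements, namely the $\Phi_i$-images of a generating set of $A_i$; and, because $\Phi_i$ is surjective, $[\pi_1(X):A]=[\,\pi_1(M_i)/\ker\Phi_i:A_i\ker\Phi_i/\ker\Phi_i\,]\le[\pi_1(M_i):A_i]\le C_0=C$. This produces the required subgroup and finishes the proof. The only non-formal ingredient is the existence of the surjection $\Phi_i$, which is precisely where Theorem~A enters through Corollary~\ref{surj}; prior to Theorem~A one had essentially no handle on $\pi_1(X)$. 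I therefore do not expect a genuine obstacle beyond citing the correct version of the manifold structure theorem — the one whose conclusion is an abelian, rather than merely nilpotent, finite-index subgroup under the noncollapsing hypothesis.
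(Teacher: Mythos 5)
Your reduction is exactly the paper's: the proof there also begins ``By Corollary \ref{surj}, it suffices to prove the statement for manifolds,'' and the push-forward bookkeeping (image of a normal abelian subgroup under a surjection is normal abelian, generated by at most $n$ elements, of index bounded by the original index) is the same as your Steps 2--3. The difference is what you do with the manifold case: you cite it as known, while the paper, after noting that the manifold statement is precisely \cite[Theorem 1.7]{KL}, gives its own self-contained proof via a contradiction argument using equivariant Gromov--Hausdorff convergence, the Cheeger--Colding splitting theorem, the Fukaya--Yamaguchi generalized Bieberbach theorem, and the gap result of \cite{PR} to kill the subgroups $H_i$; that alternative argument is the real content of the paper's proof, and your version black-boxes it. One correction of attribution: the theorem of Kapovitch--Wilking that you invoke only yields a nilpotent (not abelian) subgroup of bounded index and does not involve the volume hypothesis; the noncollapsed, abelian refinement you need is the Kapovitch--Lott result \cite[Theorem 1.7]{KL} (or the paper's own argument). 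Your parenthetical ``the volume lower bound promotes nilpotent to abelian'' is not a formal consequence of the Kapovitch--Wilking statement and does need a genuine proof, so the citation must point to a source where that refinement is actually established.
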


\begin{proof}
	By Corollary \ref{surj}, it suffices to prove the statement for manifolds. The manifold result was proved in \cite[Theorem 1.7]{KL}. Here we present a different proof by using a result from \cite{PR}.
	
	Suppose the contrary, then we have a sequence of Riemannian manifolds $M_i^n$ with
	$$\mathrm{Ric}_{M_i}\ge -i^{-1}, \quad \mathrm{diam}(M_i)\le D,\quad \mathrm{vol}(M_i)\ge v,$$ 
	but any abelian subgroup of $\Gamma_i:=\pi_1(M_i,p_i)$ has index $\ge 2^i$. Passing to a subsequence, we obtain equivariant Gromov-Hausdorff convergence
	\begin{center}
		$\begin{CD}
		(\widetilde{M}_i,y_i,\Gamma_i) @>GH>> 
		(Y,y,G)\\
		@VV\pi_i V @VV\pi V\\
		(M_i,x_i) @>GH>> (X,x).
		\end{CD}$
	\end{center}
	By the compactness of $X$ and Cheeger-Colding's splitting theorem \cite{CC1}, $Y$ splits isometrically as $\mathbb{R}^k\times K$, where $K$ is compact. Since $M_i$ is non-collapsing, both $X$ and $Y$ has Hausdorff dimension $n$, and $G$ is a discrete group. By \cite{FY92}, there is a sequence of subgroups $H_i$ of $\Gamma_i$ such that
	$$(\widetilde{M}_i,y_i,H_i)\overset{GH}\longrightarrow (Y,y,G_0=\{e\})$$
	and $\Gamma_i/H_i$ is isomorphic to $G/G_0=G$ for all $i$ large. By \cite[Theorem 0.8]{PR} (also see Lemma 2.16 and Theorem 2.17), $H_i$ must be trivial for all $i$ large. Thus $\Gamma_i$ is isomorphic to $G$ for all $i$ large. We claim that $G$ is virtually abelian. If the claim is true, then clearly the desired contradiction follows.
	
	Let $$p:\mathrm{Isom}(\mathbb{R}^k\times K)\to \mathrm{Isom}(\mathbb{R}^k)$$
	be the natural projection. By generalized Bieberbach theorem \cite{FY92}, $p(G)$ contains a subgroup $\mathbb{Z}^r$ of finite index, where $r\le k$. Consider the exact sequence
	$$1\longrightarrow \mathrm{ker}p\cap p^{-1}(\mathbb{Z}^r) \longrightarrow p^{-1}(\mathbb{Z}^r)\overset{p}\longrightarrow \mathbb{Z}^r\longrightarrow 1.$$
	Because $\mathrm{ker}p$ is a discrete subgroup of $\mathrm{Isom}(K)$, which is compact, $\mathrm{ker}p$ is finite. By Lemma 4.4 of \cite{FY92}, $p^{-1}(\mathbb{Z}^r)$ contains an abelian subgroup generated by $r$ element and of finite index. The claim now follows from the fact that $p^{-1}(\mathbb{Z}^r)$ has finite index in $G$.
\end{proof}

\begin{thm}\label{quant_limit}
	Let $X\in\mathcal{M}(n,0,v)$. Suppose that $X$ has Euclidean volume growth of constant $\ge L$. Then $\pi_1(X)$ is finite of order $\le 1/L$. If $L>1/2$, then any loop in $B_r(x)$ is contractible in $B_{Cr}(x)$, where $x\in X$ and $r>0$.
\end{thm}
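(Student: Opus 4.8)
The plan is to treat the two conclusions separately: the order bound $|\pi_1(X)|\le 1/L$ uses only $L>0$ and the adaptation of Li's and Anderson's volume argument, while the quantitative contractibility uses $L>1/2$ and is the limit-space incarnation of the mechanism behind Theorem \ref{large_vol_contra}. In both cases I would start from the observation that Euclidean volume growth upgrades to a \emph{pointwise} density bound: writing $(M_i,p_i)$ for a defining sequence (so $\mathrm{Ric}_{M_i}\ge0$, $\mathrm{vol}(B_1(p_i))\ge v$, $(M_i,p_i)\to(X,p)$), the Bishop--Gromov monotonicity valid on the non-collapsing Ricci limit space $X$ (\cite{CC1,Co,CC2}) makes $\mathcal H^n(B_s(x))/\mathrm{vol}(B_s^n(0))$ non-increasing in $s$, while comparing balls about an arbitrary $x$ with balls about $p$ and letting the radii tend to infinity shows this ratio has limit $\ge L$ as $s\to\infty$; hence $\mathcal H^n(B_s(x))/\mathrm{vol}(B_s^n(0))\ge L$ for all $s>0$ and all $x\in X$, and in particular $\omega(x)\ge L$ for every $x$.

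\emph{Order bound.} By Theorem A, $X$ is semi-locally simply connected, so its universal cover $(\widetilde X,\widetilde p)$ is a genuine connected, simply connected covering space on which $\Gamma:=\pi_1(X)$ acts freely, isometrically and properly discontinuously with $\widetilde X/\Gamma=X$. Since the projection $\pi\colon\widetilde X\to X$ is a local isometry, the volume density of $\widetilde X$ at any point equals $\omega(\pi(\cdot))\le1$, and $\widetilde X$ inherits from $X$ the Bishop inequality $\mathcal H^n(B_R(\widetilde p))\le\mathrm{vol}(B_R^n(0))$ for all $R$. Fix distinct $g_1=e,g_2,\dots,g_N\in\Gamma$, put $D=\max_k d(g_k\widetilde p,\widetilde p)$, and let $F$ be the Dirichlet fundamental domain centered at $\widetilde p$. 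Lifting minimal geodesics issuing from $p$ shows $\pi(F\cap B_R(\widetilde p))\supseteq B_R(p)$, so (as $\pi$ is $1$-Lipschitz, injective on $\mathrm{int}\,F$, and $\partial F$ is $\mathcal H^n$-null) $\mathcal H^n(F\cap B_R(\widetilde p))\ge\mathcal H^n(B_R(p))$; and the translates $g_k(F\cap B_R(\widetilde p))$ are pairwise disjoint and lie in $B_{R+D}(\widetilde p)$, so
$$N\cdot\mathcal H^n(B_R(p))\le\sum_{k=1}^N\mathcal H^n\big(g_k(F\cap B_R(\widetilde p))\big)\le\mathcal H^n(B_{R+D}(\widetilde p)).$$
Inserting $\mathcal H^n(B_R(p))\ge L\,\mathrm{vol}(B_R^n(0))$ and $\mathcal H^n(B_{R+D}(\widetilde p))\le\mathrm{vol}(B_{R+D}^n(0))$ and letting $R\to\infty$ gives $NL\le1$. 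Thus $\Gamma$ cannot contain $N$ distinct elements once $N>1/L$, so $\Gamma$ is finite and $|\pi_1(X)|\le1/L$.

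\emph{Quantitative contractibility.} Here $\omega(y)\ge L>1/2$ for all $y\in X$, and the target is a scale-invariant linear bound $\rho(r,x)\le C_* r$ with $C_*=C_*(n,L)$. Fix $\epsilon>0$ with $\omega':=(L-\epsilon)/(1+\epsilon)>1/2$ and $d_0=d_0(n,\epsilon)$ with $\mathrm{vol}(B_1^n(0))/\mathrm{vol}(B_{1+d_0}^n(0))\ge 1/(1+\epsilon)$. Given $x\in X$, pick $x_i\to x$ in $M_i$; by Cheeger--Colding volume convergence there is $s_0\in[1/2,1]$ with $\mathrm{vol}(B_{s_0}(x_i))\ge(L-\epsilon)\mathrm{vol}(B_{s_0}^n(0))$ for all large $i$, and relative volume comparison then gives $\mathrm{vol}(B_{s_0}(y))\ge\omega'\,\mathrm{vol}(B_{s_0}^n(0))$ for all $y\in B_{d_0s_0}(x_i)$. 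Rescaling $M_i$ by $s_0^{-1}$ and applying Theorem \ref{large_vol_contra} (with $\kappa=0$, $\omega=\omega'$) produces $\epsilon_*=\epsilon(n,0,\omega')$ and $C_*=C(n,0,\omega')$ with $\rho(\tau,y)\le C_*\tau$ on $[0,\epsilon_*s_0)$ for every such $y$ and all large $i$. Rescaling once more so that $B_{d_0s_0}(x_i)$ becomes the $2$-ball, $t\mapsto C_*t$ is an indicatrix which is $<1/2$ on a short interval $[0,T)$, $T=T(n,L)$, so Theorem \ref{convergence_LGC} transports the bound to the limit and yields $\rho(t,x)\le C_*t$ for $t$ below a threshold $\epsilon_1(n,L)$. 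Finally, all hypotheses survive rescaling of $X$ (the density bound $\omega(\cdot)\ge L$ is scale-invariant, the Euclidean growth constant is rescaling-invariant, and non-collapsing persists because $\omega(x)\ge L>0$), so the same estimate with the same $\epsilon_1$ holds for $\lambda X$; taking $\lambda$ with $\lambda r<\epsilon_1$ reduces an arbitrary $r>0$ to a small radius, and we conclude that every loop in $B_r(x)$ contracts in $B_{Cr}(x)$ with $C:=C_*+1$.

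\emph{Main obstacle.} The genuinely delicate step is the claim that $\widetilde X$ still obeys Bishop--Gromov: one must either invoke that the universal cover of a non-collapsing $\mathrm{RCD}(0,n)$ (equivalently, non-collapsing Ricci limit) space is again such a space, or argue directly from the local-to-global property of the curvature-dimension condition together with the facts that $\pi$ is a local isometry preserving essential non-branching and that Dirichlet-domain boundaries are $\mathcal H^n$-negligible in this setting. By contrast, the second conclusion is conceptually routine; the only care needed is the bookkeeping of the two rescalings, so that the threshold coming from Theorem \ref{large_vol_contra} and the constraint $\lambda(t)<1/2$ in Theorem \ref{convergence_LGC} combine into a single scale-invariant estimate.
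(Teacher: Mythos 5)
Your proposal is correct and follows essentially the same route as the paper, whose proof is only the remark that the arguments of Anderson \cite{An2} and Theorem \ref{large_vol_contra} carry over: Bishop--Gromov on $X$ gives the pointwise density bound, the universal cover (which exists by Theorem A) supports volume comparison for the orbit-counting estimate, and the $L>1/2$ statement is the limit-space version of Corollary \ref{quant_anderson} obtained via volume convergence, Theorem \ref{large_vol_contra}, Theorem \ref{convergence_LGC}, and rescaling. The subtlety you flag --- relative volume comparison on $\widetilde X$ --- is used by the paper in exactly the same implicit way (see its $\kappa>0$ theorem), so your write-up differs only in being more explicit, not in method.
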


\begin{proof}
	The proof goes the same as the one of \cite{An2} and our Theorem \ref{large_vol_contra}.
\end{proof}
 
\begin{thm}
	Let $\kappa>0$ and let $X\in \mathcal{M}(n,\kappa,v)$. Then $X$ is compact and $\pi_1(X)$ is finite of order $\le C(n,\kappa,v)$.
\end{thm}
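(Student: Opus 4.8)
The plan is to deduce this statement directly from Corollary~\ref{surj} together with the classical volume-comparison bound on the order of the fundamental group of the approximating manifolds. Write $X$ as a pointed Gromov--Hausdorff limit of complete $n$-manifolds $(M_i,p_i)$ with $\mathrm{Ric}_{M_i}\ge (n-1)\kappa$ and $\mathrm{vol}(B_1(p_i))\ge v$. Since $\kappa>0$, Bonnet--Myers forces each $M_i$ to be compact with $\mathrm{diam}(M_i)\le D:=\pi/\sqrt{\kappa}$; hence $X$, being a Gromov--Hausdorff limit of spaces of uniformly bounded diameter, is compact. Moreover $\mathrm{vol}(M_i)\ge v$, since $B_1(p_i)\subset M_i$ when $D\ge 1$ and $B_1(p_i)=M_i$ when $D<1$. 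Thus $X\in\mathcal{M}(n,\kappa,D,v)$.

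Next I would bound $|\pi_1(M_i)|$ uniformly in $i$. The universal cover $\widetilde{M}_i$ carries a metric with $\mathrm{Ric}\ge (n-1)\kappa$ and, again by Bonnet--Myers, has diameter $\le D$; so by Bishop's inequality $\mathrm{vol}(\widetilde{M}_i)\le \mathrm{vol}(B^n_D(\kappa))$. Summing the volumes of the translates of a fundamental domain gives
\[
|\pi_1(M_i)|=\frac{\mathrm{vol}(\widetilde{M}_i)}{\mathrm{vol}(M_i)}\le \frac{\mathrm{vol}(B^n_D(\kappa))}{v}=:C(n,\kappa,v),
\]
which is Anderson's bound \cite{An1} specialized to the present situation.

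Finally, since $(M_i,p_i)$ is non-collapsing, Theorem~A shows $X$ is semi-locally simply connected, so $\pi_1(X)$ is a genuine group. By Corollary~\ref{surj}, for all $i$ large there is a surjective homomorphism $\Phi_i\colon\pi_1(M_i)\to\pi_1(X)$, and therefore
$$|\pi_1(X)|\le |\pi_1(M_i)|\le C(n,\kappa,v),$$
which is the desired estimate.

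I do not expect a genuine obstacle here: the real work has already been carried out in Theorem~A (semi-local simple connectedness of non-collapsing Ricci limits) and in Corollary~\ref{surj} (which in turn rests on Lemma~\ref{local_surj}). The only points that require a bit of care are the passage from the normalization $\mathrm{vol}(B_1(p_i))\ge v$ to the compact normalization defining $\mathcal{M}(n,\kappa,D,v)$, and checking that the volume-comparison bound on $|\pi_1(M_i)|$ is uniform in $i$; both are routine.
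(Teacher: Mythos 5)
Your proof is correct, but it takes a different route from the paper. You argue on the sequence: Bonnet--Myers and Bishop comparison on $M_i$ and $\widetilde{M}_i$ give the Anderson-type bound $\#\pi_1(M_i)\le \mathrm{vol}(B^n_{\pi/\sqrt{\kappa}}(\kappa))/v$, and you then transfer this to $X$ through the surjection $\Phi_i\colon\pi_1(M_i)\to\pi_1(X)$ of Corollary \ref{surj} (noting, correctly, that the bounded-diameter normalization puts $X\in\mathcal{M}(n,\kappa,\pi/\sqrt{\kappa},v)$ so that the corollary applies, and that Theorem A makes $\pi_1(X)$ meaningful). The paper instead argues intrinsically on the limit: by Theorem A the universal cover $\widetilde{X}$ exists and is simply connected, so $\pi_1(X)$ acts on $\widetilde{X}$ with quotient $X$; relative volume comparison on $\widetilde{X}$ yields a synthetic Bonnet--Myers bound $\mathrm{diam}\,X\le\mathrm{diam}\,\widetilde{X}\le\pi/\sqrt{\kappa}$ and then $\#\pi_1(X)\le \mathcal{H}^n(\widetilde{X})/\mathcal{H}^n(X)\le \mathrm{vol}(B^n_{\pi/\sqrt{\kappa}}(\kappa))/v$, giving the same constant. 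The trade-off: your argument only ever does comparison geometry on smooth manifolds, at the cost of invoking the approximation machinery behind Corollary \ref{surj} (Lemma \ref{local_surj}); the paper's argument bypasses $\Phi_i$ entirely and gets the diameter bound for $X$ directly, but it must assert Bishop--Gromov on the (possibly non-smooth) cover $\widetilde{X}$, which your route avoids. Both proofs ultimately rest on Theorem A, yours through Corollary \ref{surj} and the paper's through the simple connectedness of $\widetilde{X}$.
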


Again this was only known before assuming the universal cover is simply connected. We give a complete proof for readers' convenience.

\begin{proof}
	Let $\widetilde{X}$ be the universal cover of $X$ and let $\tilde{x}\in X$. Since relative volume comparison holds on $\widetilde{X}$, we know that
	$$\dfrac{\mathcal{H}^n(B_r(\tilde{x}))}{\mathrm{vol}(B_r^n(\kappa))}$$
	is a non-increasing function in $r$. Together with the fact that the space form of constant curvature $\kappa>0$ has diameter $\pi/\sqrt{\kappa}$, we see that
	$$\mathrm{diam}X\le \mathrm{diam}\widetilde{X}\le \pi/\sqrt{\kappa}.$$
	Then
	$$\#\pi_1(X)\le \dfrac{\mathcal{H}^n(\widetilde{X})}{\mathcal{H}^n(X)}\le\dfrac{\mathrm{vol}(B_{\pi/\sqrt{\kappa}}^n(\kappa))}{v}.$$
\end{proof} 
 
\begin{thm}
	Given $n,v>0$, there is a constant $C(n,v)$ such that the following holds. Let $X\in\mathcal{M}(n,\kappa,v)$ be a Ricci limit space and let $C_xX$ be a tangent cone of $X$ at $x\in X$. Then $C_xX$ is a metric cone $C(Z)$ with $\pi_1(Z)$ having order $\le C(n,v)$.
\end{thm}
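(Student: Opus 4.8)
The plan is to convert the problem into a volume-density count on the universal cover of the cross section. First, by Cheeger--Colding \cite{CC1}, every tangent cone of a non-collapsing Ricci limit space is a metric cone, so $C_xX=C(Z)$ for a compact length space $Z$. Moreover, running a diagonal argument through the sequence $M_i\to X$ and rescaling, $C_xX$ is realized as a pointed Gromov--Hausdorff limit $(s_i^{-1}M_i,x_i)$ with $s_i\to 0$, $\mathrm{Ric}\ge (n-1)\kappa s_i^{2}\to 0$, and — by Colding's volume convergence \cite{Co,CC2} — with $\mathrm{vol}(B_1(x_i))\to\mathcal{H}^n(B_1(v))>0$, where $v$ is the vertex. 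Hence $C(Z)$ is itself a non-collapsing Ricci limit space with nonnegative Ricci curvature, and Theorem A applies to it: $C(Z)$, and every connected open subset of it, is semi-locally simply connected and admits a universal cover.

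Next I would identify $\pi_1(Z)$ inside $C(Z)$ and reduce the bound to densities. The punctured cone $C(Z)\setminus\{v\}$ is homeomorphic to $(0,\infty)\times Z$ and deformation retracts onto $Z$, so $\pi_1(Z)\cong\pi_1\bigl(C(Z)\setminus\{v\}\bigr)$. Let $\widetilde Z\to Z$ be the universal covering; then the metric cone $C(\widetilde Z)$ carries a local-isometric covering $C(\widetilde Z)\setminus\{\widetilde v\}\to C(Z)\setminus\{v\}$ with deck group $\pi_1(Z)$, acting isometrically on $C(\widetilde Z)$ and fixing the vertex $\widetilde v$. Granting the key claim that $C(\widetilde Z)$ is again a non-collapsing Ricci limit space with $\mathrm{Ric}\ge 0$, Bishop--Gromov forces its density at $\widetilde v$ to be at most $1$; on the other hand $\widetilde Z\to Z$ is a $\bigl(\#\pi_1(Z)\bigr)$-sheeted local isometry, so $\mathcal{H}^{n-1}(\widetilde Z)=\#\pi_1(Z)\cdot\mathcal{H}^{n-1}(Z)$, and for cones the vertex density equals $\mathcal{H}^{n-1}(\cdot)/(n\omega_n)$, whence
$$\#\pi_1(Z)\cdot\Theta_x=\#\pi_1(Z)\cdot\frac{\mathcal{H}^{n-1}(Z)}{n\omega_n}=\frac{\mathcal{H}^{n-1}(\widetilde Z)}{n\omega_n}\le 1,$$
where $\Theta_x$ denotes the volume density of $X$ at $x$. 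Thus $\#\pi_1(Z)\le 1/\Theta_x$, and one finishes by bounding $\Theta_x$ from below in terms of $n$ and $v$: by relative volume comparison on each $M_i$, the hypothesis $\mathrm{vol}(B_1(p_i))\ge v$ propagates monotonically (up to the appropriate space-form factor) to a uniform lower bound on $\mathcal{H}^n(B_1(x))$ and hence on $\Theta_x$.

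The step I expect to be the real obstacle is the claim that $C(\widetilde Z)$ — equivalently $\widetilde Z$ — is a non-collapsing Ricci limit space with the expected curvature and volume bounds, and that the volume lower bound survives passage to the limit. One cannot simply invoke ``covers of Ricci limit spaces are Ricci limit spaces'': as noted in Section \ref{section_ratio} in connection with excess estimates, covers of incomplete local balls inside limit spaces need not behave well. I would instead argue through the approximating manifolds: fix $z_0\in Z$ realized as a limit of points $z_{0,i}\in s_i^{-1}M_i$ at distance $\approx 1$ from $x_i$, pass to equivariant Gromov--Hausdorff limits of the universal covers of the small balls $B_\epsilon(z_{0,i})$, use Anderson's bound \cite{An1} on $\#\pi_1$ of such balls (which already forces $\pi_1(Z)$ to be finite) together with the surjection from the sequence onto the limit local fundamental group in the spirit of Lemma \ref{local_surj}, and then re-cone to recognize $C(\widetilde Z)$ as a Gromov--Hausdorff limit of genuine manifolds — suitably rescaled covers of the $B_\epsilon(z_{0,i})$ — with $\mathrm{Ric}\to 0$ and a uniform volume lower bound. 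Once $C(\widetilde Z)$ is placed in this framework, the cone structure \cite{CC1} and volume convergence \cite{Co,CC2} deliver the density estimate used above.
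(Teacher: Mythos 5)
Your first half runs parallel to the paper's argument and is fine: realizing $C(Z)$ as a blow-up (hence noncollapsed) Ricci limit, applying Theorem A to it, identifying $\pi_1(Z)$ with $\pi_1(C(Z)\setminus\{v\})$, and the final count $\#\pi_1(Z)\le 1/\Theta_x$ with a lower density bound is the same count the paper implicitly runs in the form $\#\pi_1(Z)\le\mathcal{H}^{n-1}(\widetilde Z)/\mathcal{H}^{n-1}(Z)$. The genuine gap is exactly the step you flag and then sketch: you never establish a Bishop--Gromov inequality centered at the vertex of $C(\widetilde Z)$ (equivalently $\mathcal{H}^{n-1}(\widetilde Z)\le n\omega_n$), and the manifold-approximation scheme you propose cannot deliver it. Universal covers of small balls $B_\epsilon(z_{0,i})$ in the rescaled manifolds only unwrap loops that are short and localized near the single point $z_0$; the deck group you need is $\pi_1(Z)$, generated by loops of definite size (comparable to $\mathrm{diam}(Z)\le\pi$) spread over the whole cross-section, and these are invisible to such local covers. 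Anderson's theorem \cite{An1} bounds the order of the subgroup generated by sufficiently short loops, not $\pi_1$ of a ball of definite size, so it neither bounds nor ``finitizes'' $\pi_1(Z)$ as your parenthetical asserts. Even if you replace small balls by annuli around $x_i$ and build covers whose deck groups map onto $\pi_1(Z)$ in the spirit of Lemma \ref{local_surj}, the approximating spaces are covers of annuli: they contain no point playing the role of $\widetilde v$, and balls centered near their inner boundary run into the boundary, so no volume comparison centered at (or reaching) the vertex can be extracted from them. Hence the key inequality behind your density count is left unproved.

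The repair is to avoid the vertex altogether, which is what the paper does: once Theorem A together with the radial retraction $R:C(Z)\setminus\{\mathrm{vertex}\}\to Z$ shows that $Z$ itself is semi-locally simply connected, one works directly with the compact cross-section $Z$, which carries $\mathrm{Ric}_Z\ge n-2$ in the sense of \cite{LV,St}; relative volume comparison then holds on $Z$ and, by the same lifting used for $\widetilde X$ in the two preceding theorems, on its universal cover $\widetilde Z$, giving $\mathrm{diam}(\widetilde Z)\le\pi$ and $\#\pi_1(Z)\le\mathcal{H}^{n-1}(\widetilde Z)/\mathcal{H}^{n-1}(Z)\le \mathrm{vol}(S^{n-1})/\mathcal{H}^{n-1}(Z)$. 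Your stated reason for rejecting this synthetic route (bad behavior of covers of incomplete local balls) does not apply here, since $Z$ is compact and complete. A smaller point: as in the paper's own proof, the lower bound on $\Theta_x$ (equivalently on $\mathcal{H}^{n-1}(Z)$) obtained from volume comparison depends on $\kappa$ and $d(x,p)$ as well as on $n,v$, so neither argument actually yields a constant depending only on $(n,v)$; that is an issue with the statement rather than a defect specific to your approach.
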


\begin{proof}
	By \cite{CC1}, $C_xX$ is a metric cone $C(Z)$ with $\mathrm{diam}(Z)\le \pi$. $\mathcal{H}^{n-1}(Z)\ge v'$ for some $v'$ depending on $(X,x)$. Also, $\mathrm{Ric}_Z\ge n-2$ in the sense of \cite{LV,St}; in particular, relative volume comparison holds on $Z$. Thus the result follows if $Z$ is semi-locally simply connected.
	
	For a point $(1,z)\in C(Z)$, by Theorem A, $\lim_{t\to 0}\rho(t,(1,z))=0$. Choose a $T$ small so that $\rho(T,(1,z))<1/2$. For any loop $c$ in $B_T(z)\subset B_T((1,z))$, there is a homotopy $H$ contracts $c$ so that $\mathrm{im}(H)$ does not contain the vertex of $C(Z)$. We define a retraction
	$$R:C(Z)-\{\text{vertex}\} \to Z$$
	by sending $(t,z)$ to $z$. Retracting $\mathrm{im}(H)$ to $Z$ via this map $R$, we conclude that $c$ is contractible in $Z$.
\end{proof}

\Addresses
	
\end{document}